\title[Definitions of quasiconformality]{Definitions of quasiconformality on metric surfaces}
\author{Damaris Meier}
\address[Damaris Meier]{Department of Mathematics\\ University of Fribourg\\ Chemin du Mus\'ee 23\\ 1700 Fribourg, Switzerland.}
\email{damaris.meier@unifr.ch}
\author{Kai Rajala}
\address[Kai Rajala]{Department of Mathematics and Statistics, University of Jyväskylä, P.O. Box 35 (MaD), 40014 University of Jyväskylä, Finland}
\email{kai.i.rajala@jyu.fi}
\keywords{}
\subjclass[2020]{30L10 (Primary) 30C65; 30F10 (Secondary)}
\date{\today}
\numberwithin{equation}{section}
\newtheorem{thm}{Theorem}[section]
\newtheorem{lemma}[thm]{Lemma}
\newtheorem{corollary}[thm]{Corollary}
\newtheorem{proposition}[thm]{Proposition}
\theoremstyle{remark}
\theoremstyle{definition}
\newcommand{\R}{\mathbb{R}}
\newcommand{\N}{\mathbb{N}}
\newcommand{\Nloc}{N^{1,2}_{\text{loc}}}
\newcommand{\Lloc}{L_{\text{loc}}}
\newcommand{\hm}{{\mathcal H}}
\newcommand{\diam}{\operatorname{diam}}
\renewcommand{\mod}{\operatorname{Mod}}
\DeclareMathOperator{\md}{md}
\DeclareMathOperator{\ap}{ap}
\DeclareMathOperator{\apmd}{ap\,md}
\newcommand{\jac}{\mathrm{Jac}}
\begin{document}
\begin{abstract}
We explore the interplay between different definitions of distortion for mappings $f\colon X\to \mathbb{R}^2$, where $X$ is any metric surface, meaning that $X$ is homeomorphic to a domain in $\mathbb{R}^2$ and has locally finite 2-dimensional Hausdorff measure. We establish that finite distortion in terms of the familiar analytic definition always implies finite distortion in terms of \emph{maximal and minimal stretchings along paths}. The converse holds for maps with locally integrable distortion. In particular, we prove the equivalence of various notions of quasiconformality, implying a novel uniformization result for metric surfaces.
\end{abstract}
\maketitle

\section{Introduction}
Within this note we study the relation between different notions of distortion for mappings on metric surfaces. Here, a \emph{metric surface} $X$ is a metric space homeomorphic to a domain in $\R^2$ with locally finite $2$-dimensional Hausdorff measure. Most importantly, we show that locally integrable \emph{distortion along paths} introduced by the authors in \cite{MR23} is comparable to the analytic distortion for mappings $f:X \to \R^2$. 

Before stating the main theorem, we provide the relevant definitions. Let $X$ and $Y$ be metric surfaces and consider the Newton-Sobolev space $\Nloc(X,Y)$, see Section \ref{section:Sobolev}. We call a map $f:X \to Y$ \emph{sense-preserving} if for any domain $\Omega$ compactly contained in $X$ so that $f|_{\partial\Omega}$ is continuous it follows that $\deg(y,f,\Omega)\geq1$ for any $y\in f(\Omega)\setminus f(\partial\Omega)$. Here, $\deg$ is the local topological degree of $f$ (see \cite{Ric93}*{I.4}).

Let $f\in\Nloc(X,Y)$ be sense-preserving. We say that $f$ has \emph{finite distortion along paths} and denote $f \in \operatorname{FDP}(X,Y)$ if there is a measurable $K\colon X\to[1,\infty)$ such that 
\begin{equation} \label{ineq:path-distortion} 
\rho_f^u(x)\leq K(x)\cdot \rho_f^l(x) \quad \text{for almost every } x\in X,
\end{equation}
where $\rho_f^u$ and $\rho_f^l$ denote the minimal weak upper and maximal weak lower gradient of $f$, respectively; for definitions see Section \ref{section:Sobolev}. The \emph{distortion along paths} $K_f$ of $f$ is 
$$K_f(x):=\begin{cases}
\frac{\rho_f^u(x)}{\rho_f^l(x)}, & \text{if } \rho_f^l(x)\neq0,\\
1, & \text{if } \rho_f^l(x)=0.
\end{cases} $$

We say that $f$ has \emph{finite analytic distortion}, denoted 
$f\in\operatorname{FDA}(X,Y)$, if there is a measurable $C\colon X\to[1,\infty)$ such that 
\begin{equation}\label{ineq:analytic-distortion}
\rho^u_f(x)^2 \leq C(x)\cdot J_f(x) \quad \text{for almost every } x\in X,
\end{equation} 
where 
\begin{equation}
\label{area_limsup}
J_f(x)=\limsup_{r \to 0} \frac{\hm^2_Y(f(\overline{B}(x,r)))}{\pi r^2}. 
\end{equation} 
If $f$ is a homeomorphism and $X$ is a domain in $\R^2$ or $Y=\R^2$, then $J_f$ coincides with the Radon-Nikodym derivative of the corresponding pull-back measure with respect to $\hm_X^2$, see Corollary \ref{cor:jacohomeo}. Notice, however, that such a pull-back is not defined for non-homeomorphic maps. 

The \emph{analytic distortion} $C_f$ of $f$ is 
$$C_f(x):=\begin{cases}
\frac{\rho_f^u(x)^2}{J_f(x)}, & \text{if } J_f(x)\neq0,\\
1, & \text{if } J_f(x)=0.
\end{cases} $$

Inequality \eqref{ineq:analytic-distortion} is equivalent to \eqref{ineq:path-distortion} whenever $f$ is a map between euclidean domains. However, in the generality of metric spaces, it is unclear how the two definitions relate. The following main theorem of this work shows equivalence for mappings from a metric surface into $\R^2$.

\begin{thm}\label{thm:main}
    Let $f\in\Nloc(X,\R^2)$ be sense-preserving.
    \begin{enumerate}
        \item\label{thm:main-2} If $f \in \operatorname{FDA}(X,\R^2)$, then $f \in \operatorname{FDP}(X,\R^2)$ and 
        \begin{align*} 
           K_f(x) \leq 4\sqrt{2}\, C_f(x) \quad \text{for almost every } x\in X.
        \end{align*}
        
        \item\label{thm:main-1} If $f \in \operatorname{FDP}(X,\R^2)$ and $K_f\in\Lloc^1(X)$, then $f \in \operatorname{FDA}(X,\R^2)$ and 
        \begin{align*}
            C_f(x) \leq 4\sqrt{2}\, K_f(x) \quad \text{for almost every } x\in X.
        \end{align*} 
        
    \end{enumerate}
\end{thm}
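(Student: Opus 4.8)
The plan is to work pointwise at almost every $x\in X$ where the relevant derivatives exist and relate $\rho_f^u$, $\rho_f^l$ and $J_f$ through a common intermediate object: the infinitesimal distortion ellipse (or rather its metric analogue). Since the target is $\R^2$, the natural tool is approximate differentiability: a Newton-Sobolev map $f\in\Nloc(X,\R^2)$ is, after passing to suitable parametrizations/charts and using the Sobolev embedding and area formulas available on metric surfaces, approximately metrically differentiable at almost every point, with an approximate metric derivative $\apmd f_x$ which is a seminorm on $\R^2$ (here I think of a fixed measurable chart identifying a neighborhood of $x$ with a piece of $\R^2$, or more intrinsically of tangent-type directions). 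The quantities in the theorem should then be read off from this seminorm: $\rho_f^u(x)$ is the maximal stretch $\max_{|v|=1}\apmd f_x(v)$, $\rho_f^l(x)$ is the minimal stretch $\min_{|v|=1}\apmd f_x(v)$, and the Jacobian $J_f(x)$ is comparable to the area of the image of the unit disk under $\apmd f_x$, i.e. to $\rho_f^u(x)\cdot\rho_f^l(x)$ up to a universal constant. I expect the paper to have set up exactly this dictionary in its earlier sections (the approximate metric differentiability theory and the relation \eqref{area_limsup} to pull-back measures), so that both inequalities reduce to elementary comparisons.

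**Concretely, for part (1)** I would argue as follows. Fix a point of approximate metric differentiability. By John's theorem (or a direct $2$-dimensional computation) the unit "ball" of the seminorm $\apmd f_x$ contains and is contained in concentric ellipses with axis ratio controlled, giving
\begin{equation*}
\hm^2\big(\apmd f_x(\overline{B}(0,1))\big)\ \geq\ c\,\rho_f^u(x)\,\rho_f^l(x)
\end{equation*}
for an explicit constant $c$; tracking constants in the metric setting is where the $\sqrt 2$ enters. Combining with $J_f(x)\ \sim\ \pi^{-1}\hm^2(\apmd f_x(\overline B(0,1)))$ and the definition \eqref{ineq:analytic-distortion} yields $\rho_f^u(x)^2\leq C(x)J_f(x)\lesssim C(x)\rho_f^u(x)\rho_f^l(x)$, hence $\rho_f^u(x)\leq 4\sqrt2\,C_f(x)\rho_f^l(x)$, which is exactly \eqref{ineq:path-distortion} with $K_f(x)\leq 4\sqrt 2\,C_f(x)$. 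The only subtlety is handling the degenerate cases $\rho_f^l(x)=0$ or $J_f(x)=0$, which are absorbed into the case distinctions in the definitions of $K_f$ and $C_f$, together with showing these bad sets have measure zero consistently.

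**For part (2)**, the extra hypothesis $K_f\in\Lloc^1(X)$ is the signal that one cannot argue purely pointwise: I expect one needs an area-type inequality (a weak change-of-variables / coarea estimate) to guarantee that the $\limsup$ defining $J_f(x)$ is finite and is genuinely comparable to the pointwise Jacobian $\rho_f^u(x)\rho_f^l(x)$ rather than just dominating it. The strategy is to show $J_f(x)\ \geq\ c\,\rho_f^u(x)\,\rho_f^l(x)$ a.e. by a lower area bound coming from the metric differentiability (the image of a small ball covers roughly an ellipse of the stated area since $f$ is sense-preserving, which prevents collapsing/folding from destroying area), and simultaneously $J_f(x)\ \leq\ C\,\rho_f^u(x)\rho_f^l(x)$ a.e. using the upper gradient together with local integrability of $K_f$ to control the $\limsup$ — this is the step where I expect the integrability hypothesis to be genuinely used, via an absolute continuity / Vitali-covering argument rather than a naive pointwise estimate. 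Then $C_f(x)=\rho_f^u(x)^2/J_f(x)\leq \rho_f^u(x)^2/(c\,\rho_f^u(x)\rho_f^l(x)) = c^{-1}K_f(x)$, and bookkeeping the constant gives $4\sqrt2$.

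**The main obstacle** will be the lower area bound $J_f(x)\gtrsim \rho_f^u(x)\rho_f^l(x)$ in part (2): unlike the upper bound, which follows from Sobolev/upper-gradient estimates, the lower bound requires knowing that $f$ does not decrease area infinitesimally below what the metric derivative predicts, and in the metric surface setting (no smooth structure, Hausdorff $2$-measure possibly badly behaved) this is exactly where sense-preservation, the degree theory invoked in the introduction, and the $\Lloc^1$ control on $K_f$ must all be combined — likely this was already isolated as a lemma earlier in the paper, so I would cite it, but conceptually it is the crux, since without it the reverse implication genuinely fails (a map can have finite distortion along paths yet collapse a positive-measure set, breaking \eqref{ineq:analytic-distortion}).
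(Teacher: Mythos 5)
Your overall shape --- reduce everything to a pointwise comparison $J_f \sim \rho_f^u\rho_f^l$ and then divide --- does match the paper's strategy on part of the surface, but there is a genuine gap in how you dispose of the ``degenerate cases'', and that gap is where most of the actual work in the paper lives. The paper does not differentiate $f$ itself; it fixes a weakly quasiconformal parametrization $u\colon U\to X$ (Theorem \ref{thm:unif}), differentiates $h=f\circ u$ on $U\subset\R^2$, and transfers the stretch information back to $f$ via the area inequality (Theorem \ref{thm:area-ineq}). This produces a decomposition $X=X'\cup X_0$, where $X'$ is the countably $2$-rectifiable image of the Lipschitz pieces of $u$ and $X_0=u(G_0)$ is the image of the exceptional set. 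Your dictionary ($\rho_f^u,\rho_f^l$ as max/min stretches, $J_f$ comparable to $\rho_f^u\rho_f^l$) is valid only on $X'$, and even there it requires Proposition \ref{prop:area-ineq-jac}, a differentiation-of-measures statement proved by a Vitali covering argument together with the density theorem (Theorem \ref{thm:regular}) --- not automatic, since metric surfaces need not be doubling or Vitali. The crucial point you miss is that $X_0$ can have \emph{positive} $\hm^2$-measure (it contains the non-rectifiable part of $X$), so it cannot be ``absorbed into the case distinctions.''

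Handling $X_0$ is exactly where the two parts of the theorem differ in difficulty. For part (1) the paper shows $J_f=0$ almost everywhere on $X_0$ (Lemma \ref{lem:J_f=0}, a $5r$-covering argument pushed through the area formula for $h$), whence $\rho_f^u=0$ there by the FDA hypothesis and the distortion inequality is trivial on $X_0$. For part (2) one needs the converse vanishing, $\rho_f^l=0$ almost everywhere on $X_0$ (Proposition \ref{prop:J_f=0-implies-lower-grad-0}); this is the technical heart of the paper and is proved by contradiction: from a point of $X_0$ with $\rho_f^l>0$ one produces a short curve whose image has definite diameter, sweeps out level continua of a coordinate function of $f$, controls how far these continua escape using the coarea inequality, H\"older, and a maximal-function bound on $K_f$ (this is where $K_f\in\Lloc^1$ enters, together with its role in making $f$ open and discrete via Theorem \ref{thm:main-MR23}), and concludes $J_f>0$ on a positive-measure subset of $X_0$, contradicting Lemma \ref{lem:J_f=0}. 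You correctly sensed that a ``lower area bound'' preventing collapse is the crux of part (2), but you located it on the differentiable part of the surface, where it is comparatively easy (Corollary \ref{cor:rect_dis}); the real obstruction is the non-rectifiable set $X_0$, which your proposal does not see at all.
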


We do not know if the second part holds without assumption $K_f \in \Lloc^1(X)$. 
As a consequence of Theorem \ref{thm:main} we obtain the following characterization of quasiconformal homeomorphisms. 

\begin{corollary}\label{cor:qc-equivalence}
    If $f\colon X\to f(X) \subset \R^2$ is a homeomorphism, then the following are quantitatively equivalent.
    \begin{enumerate}
        \item\label{cor:qc-eq-ana} $f$ is analytically quasiconformal,
        \item\label{cor:qc-eq-geo} $f$ is geometrically quasiconformal,
        \item\label{cor:qc-eq-path} $f$ is quasiconformal along paths.
    \end{enumerate} 
    Moreover, if $f$ satisfies any of the three conditions, then so does $f^{-1}$. 
\end{corollary}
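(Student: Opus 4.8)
\textbf{Proof proposal for Corollary \ref{cor:qc-equivalence}.}

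The plan is to reduce the corollary to Theorem \ref{thm:main} together with the definitions of the three quasiconformality conditions, which all amount to requiring the relevant distortion function ($C_f$, the geometric modulus distortion, or $K_f$) to be essentially bounded by a constant $K$. First I would recall the precise statements: $f$ is analytically quasiconformal if $f\in\Nloc(X,\R^2)$ is sense-preserving with $C_f\in L^\infty(X)$; $f$ is quasiconformal along paths if $f\in\Nloc(X,\R^2)$ is sense-preserving with $K_f\in L^\infty(X)$; and $f$ is geometrically quasiconformal if it quasi-preserves the modulus of path families, $\tfrac1K\mod\Gamma\le\mod f\Gamma\le K\mod\Gamma$. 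The implication (\ref{cor:qc-eq-path})$\Rightarrow$(\ref{cor:qc-eq-ana}) is then immediate from Theorem \ref{thm:main}(\ref{thm:main-1}): a bounded $K_f$ lies in $\Lloc^1(X)$, so $C_f\le 4\sqrt2\,K_f$ a.e.\ is also bounded. Conversely (\ref{cor:qc-eq-ana})$\Rightarrow$(\ref{cor:qc-eq-path}) is immediate from Theorem \ref{thm:main}(\ref{thm:main-2}), giving $K_f\le 4\sqrt2\,C_f$ a.e. This already establishes the quantitative equivalence of (\ref{cor:qc-eq-ana}) and (\ref{cor:qc-eq-path}) with an explicit loss of a factor $4\sqrt2$ in the constant.

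To bring in the geometric definition (\ref{cor:qc-eq-geo}), I would invoke the known characterization of geometric quasiconformality for homeomorphisms between metric surfaces in terms of analytic/path data. One direction, that geometric quasiconformality implies membership in $\Nloc$ with a sense-preserving representative and bounded $K_f$ (equivalently, that a geometrically QC homeomorphism satisfies the path-distortion inequality with a uniform constant), is exactly the content of the main results of \cite{MR23}; so (\ref{cor:qc-eq-geo})$\Rightarrow$(\ref{cor:qc-eq-path}) is cited from there. For the reverse direction (\ref{cor:qc-eq-path})$\Rightarrow$(\ref{cor:qc-eq-geo}), I would combine the equivalence (\ref{cor:qc-eq-path})$\Leftrightarrow$(\ref{cor:qc-eq-ana}) just proved with the modulus estimates available for analytically quasiconformal homeomorphisms: using that $f$ is sense-preserving, lies in $\Nloc$, satisfies Lusin's condition $(N)$ (so that $J_f$ is the genuine Jacobian by Corollary \ref{cor:jacohomeo}), and has $C_f\in L^\infty$, the standard $\mod$-inequality proof — integrating admissible densities, applying the pointwise distortion bound, and using the coarea/change-of-variables inequality on a surface — yields the two-sided modulus inequality with constant comparable to $\|C_f\|_\infty$. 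Finally, the symmetry statement "$f^{-1}$ also satisfies the three conditions" follows because geometric quasiconformality is manifestly symmetric in $f$ and $f^{-1}$ (the modulus inequality is symmetric under swapping domain and target and replacing $\Gamma$ by $f\Gamma$), so once all three conditions are equivalent for $f$, the equivalence applied to the geometric condition for $f^{-1}$ transfers to the other two.

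The main obstacle is the half of (\ref{cor:qc-eq-geo})-equivalence that is genuinely about modulus rather than pointwise distortion, namely deducing the two-sided $\mod$-inequality from $C_f\in L^\infty$. This requires care that $f$ satisfies condition $(N)$ and that the Jacobian in \eqref{area_limsup} really is the density of the pulled-back measure, so that the change-of-variables step is valid; for homeomorphisms into $\R^2$ this is provided by Corollary \ref{cor:jacohomeo}, and the upper and lower modulus bounds then follow by the classical arguments of Williams or the Heinonen--Koskela theory adapted to metric surfaces, as in \cite{MR23}. I expect the bookkeeping of constants through this chain — $4\sqrt2$ from Theorem \ref{thm:main} composed with whatever factor appears in the modulus step — to be routine, so the statement of quantitative equivalence holds with an explicit, dimension-free constant.
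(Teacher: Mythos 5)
Your handling of the equivalence \eqref{cor:qc-eq-ana}$\Leftrightarrow$\eqref{cor:qc-eq-path} via Theorem \ref{thm:main} is exactly the paper's argument and is fine. The problems are all in how you connect these to the geometric condition \eqref{cor:qc-eq-geo}. First, the implication \eqref{cor:qc-eq-geo}$\Rightarrow$\eqref{cor:qc-eq-path} is not ``the content of the main results of \cite{MR23}''; that paper is about finite distortion along paths and openness/discreteness, not about extracting a Sobolev regularity and a bounded path-distortion from a modulus inequality. The tool the paper actually uses is Williams' theorem (Theorem \ref{thm:williams}), which converts the \emph{one-sided} inequality $\mod\Gamma\leq C\mod f(\Gamma)$ into the analytic condition $\rho_f^u(x)^2\leq C\,\jac_f(x)$ with the same constant; combined with Corollary \ref{cor:jacohomeo} (identifying $\jac_f$ with $J_f$) this gives \eqref{cor:qc-eq-geo}$\Rightarrow$\eqref{cor:qc-eq-ana}. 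Second, and more seriously, your route from the analytic condition back to the \emph{two-sided} modulus inequality does not work as described: the ``standard'' Williams/Heinonen--Koskela computation yields only $\mod\Gamma\leq C\mod f(\Gamma)$. The reverse inequality $\mod f(\Gamma)\leq C\mod\Gamma$ does not follow from $C_f\in L^\infty$ by change of variables in a general metric surface; the paper obtains it by exploiting that the target is planar: it invokes the uniformization theorem of \cite{Raj:17} to produce a geometrically $2$-quasiconformal $u\colon U\to X$ with $U\subset\R^2$, observes that $h=f\circ u$ is a planar homeomorphism satisfying a one-sided modulus bound, and uses the classical planar fact that this self-improves to the two-sided bound, which then transfers back to $f$. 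This detour is the genuinely nontrivial step of the proof and is absent from your proposal.

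There is also a gap in the ``moreover'' part. Geometric quasiconformality of $f^{-1}$ is indeed immediate by symmetry, but you cannot then ``apply the equivalence to $f^{-1}$'': Theorem \ref{thm:main} and the corollary are stated for maps \emph{into} $\R^2$, whereas $f^{-1}\colon f(X)\to X$ maps a planar domain into a metric surface. The paper first applies Theorem \ref{thm:williams} to $f^{-1}$ to get analytic quasiconformality (in particular $f^{-1}\in\Nloc(f(X),X)$), and then deduces quasiconformality along paths for $f^{-1}$ by a separate argument using the classical area formula (Theorem \ref{thm:classical-area-formula}) and Lemma \ref{lem:MR23-2.9}, which are available precisely because the domain of $f^{-1}$ is Euclidean. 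Without some such substitute, the last sentence of your proof is circular.
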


Here a homeomorphism $f \in \Nloc(X,Y)$ is \emph{analytically quasiconformal} (resp., \emph{quasiconformal along paths}), if $C_f$ (resp., $K_f$) is uniformly bounded. Moreover, $f$ is \emph{geometrically quasiconformal} if there is $C\geq 1$ such that 
\begin{equation} \label{ineq:geomQC}
C^{-1}\cdot\mod \Gamma\leq \mod f(\Gamma) \leq C\cdot \mod \Gamma \end{equation} 
for each curve family $\Gamma$ in $X$, where $\mod$ refers to $2$-modulus, see Section \ref{section:modulus}, and $f(\Gamma)$ denotes the family of curves $f\circ\gamma$ for $\gamma\in\Gamma$. In order to prove the equivalence of Conditions (1) and (2), we apply Williams' theorem \cite{Wil:12}.

There is a large body of literature on different definitions of quasiconformality in metric spaces, showing in particular the equivalence of the \emph{metric definition} with the analytic and geometric definitions for homeomorphisms between metric spaces with \emph{controlled geometry}, see \cite{BKR07}, \cite{HeiKos95}, \cite{HeiKos98}, \cite{HKST01}, \cite{Tys98}, \cite{Tys01}. However, in the generality of metric surfaces the metric definition is not equivalent with the other definitions (see \cite{RRR21}*{Section 5}), and does not lead to a satisfactory theory. 

Lower gradients and the class $\operatorname{FDP}(X,Y)$ were introduced in \cite{MR23} as a tool for developing the fundamental properties of non-homeomorphic maps under minimal assumptions. In particular, we proved in \cite{MR23} that a non-constant $f \in \operatorname{FDP}(X,\R^2)$ with $K_f \in L^1_{\text{loc}}(X)$ is continuous, discrete and open. Non-homeomorphic maps with controlled distortion in metric spaces have previously been considered e.g. 
in \cite{Cri06}, \cite{Guo15}, \cite{Kir14}, \cite{OnnRaj09}.

Theorem \ref{thm:main} can be applied to the \emph{uniformization problem}  of metric surfaces (see e.g. \cites{BonKle02,Raj:17,LWarea,LWintrinsic,Iko:19,MW21,NR:21,Mei22}) as follows. Here $f \in \operatorname{FDP}(X,Y)$ is \emph{quasiregular}, or has \emph{bounded distortion} (along paths), if $K_f$ is uniformly bounded. 

\begin{thm}\label{thm:reciprocal} 
    If $X$ admits a non-constant quasiregular map $f:X \to \R^2$, then $X$ admits a quasiconformal homeomorphism $\phi:X \to U$ onto a domain $U\subset\R^2$.    
\end{thm}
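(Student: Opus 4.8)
The plan is to promote the local quasiconformal information carried by $f$ to a global quasiconformal parametrization of $X$, using Corollary \ref{cor:qc-equivalence} and the classical uniformization theorem for planar Riemann surfaces. Since $f$ is quasiregular, $K_f$ is bounded, hence $K_f\in\Lloc^1(X)$, so the first part of Theorem \ref{thm:main} gives $f\in\operatorname{FDA}(X,\R^2)$ with $C_f$ uniformly bounded. By \cite{MR23} the map $f$ is continuous, discrete and open, so by Stoilow's theorem its branch set $B_f$ is closed and discrete in $X$, off $B_f$ the map $f$ is a local homeomorphism, and near each point of $B_f$ it is topologically a power map $z\mapsto z^k$.

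First I would construct quasiconformal charts on $X\setminus B_f$ and patch them. Each point of $X\setminus B_f$ has a connected neighborhood $V$ with $\overline{V}\cap B_f=\emptyset$ on which $f$ restricts to a homeomorphism onto a planar domain $f(V)$; since $f|_V\in\Nloc(V,f(V))$ has uniformly bounded analytic distortion it is analytically quasiconformal, so Corollary \ref{cor:qc-equivalence} shows that $f|_V$ and its inverse are quasiconformal, with distortion controlled only by $\sup_X C_f$ — in particular by a single constant valid for all such $V$. The maps $f|_V$ thus form a quasiconformal atlas on $X\setminus B_f$; straightening it to a conformal atlas via the measurable Riemann mapping theorem equips $X\setminus B_f$ with the structure of a Riemann surface $\mathcal R$ for which the identity is, in each chart, the composition of $f|_V$ with a quasiconformal map, hence analytically quasiconformal with uniform distortion, and therefore $\id\colon X\setminus B_f\to\mathcal R$ is quasiconformal. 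Since $X$ is homeomorphic to a domain in $\R^2$ and $B_f$ is discrete, $\mathcal R$ is a planar Riemann surface, so by Koebe's uniformization theorem it is conformally equivalent to a domain $U'\subset\mathbb{S}^2$; composing, I obtain a quasiconformal homeomorphism $\Phi_0\colon X\setminus B_f\to U'$.

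It then remains to fill in the branch points. Fix $x_0\in B_f$; for small $r$ the set $f(B(x_0,r))$ is an open neighborhood of $f(x_0)$ with $\diam f(\overline B(x_0,s))\to 0$ as $s\to0$ and with $f(x_0)\notin f(\partial B(x_0,r))$. Using the bound on $C_f$, the bounded local multiplicity of $f$ near $x_0$, and the area formula for $\operatorname{FDA}$ maps, the family $\Gamma_s$ of curves joining $\partial B(x_0,s)$ to $\partial B(x_0,r)$ inside $B(x_0,r)\setminus\{x_0\}$ satisfies $\mod\Gamma_s\le K'\mod f(\Gamma_s)$; and $f(\Gamma_s)$ joins a set of diameter $\to0$ around $f(x_0)$ to a set staying at fixed positive distance from $f(x_0)$, so $\mod f(\Gamma_s)$ is at most a constant multiple of $\bigl(\log(1/\diam f(\overline B(x_0,s)))\bigr)^{-1}\to 0$. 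Hence $\Phi_0$ carries the annuli $B(x_0,r)\setminus\overline{B(x_0,s)}$ onto conformal rings in $U'$ whose moduli tend to $\infty$, and whose bounded complementary components therefore shrink to a single point $p_0\in\partial U'$. Setting $\Phi(x_0):=p_0$ for every $x_0\in B_f$ and adjoining these (distinct, isolated) points to $U'$ produces a homeomorphism $\Phi\colon X\to U$ onto a domain $U$; since $B_f$ is countable it is $\hm^2_X$-null and meets only a curve family of modulus zero, so the quasiconformality of $\Phi$ on $X\setminus B_f$ passes to all of $X$. Finally, a Möbius transformation (if $\infty\in U$) reduces to the case $U\subset\R^2$, and $\phi:=\Phi$ is as required.

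The crux is the extension across $B_f$: one must exclude that $\Phi_0$ spreads a branch point into a nondegenerate boundary continuum, and it is precisely there — through the push-forward modulus estimate — that the \emph{boundedness} of the distortion of $f$ is indispensable. A secondary point is that the quasiconformal charts on $X\setminus B_f$ must patch with a uniform distortion constant, which is why the quantitative form of Corollary \ref{cor:qc-equivalence} is needed. The remaining ingredients — Stoilow's theorem, the measurable Riemann mapping theorem, and Koebe's uniformization of planar Riemann surfaces — are classical.
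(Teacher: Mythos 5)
Your route is genuinely different from the paper's. The paper does not build a parametrization from scratch: it takes the weakly quasiconformal parametrization $u\colon U\to X$ of Theorem \ref{thm:unif}, observes that $h=f\circ u$ is quasiregular on a planar domain and hence discrete and open by Theorem \ref{thm:main-MR23}, concludes that $u$ is discrete \emph{and} monotone and therefore already a homeomorphism, and then verifies that $\phi=u^{-1}$ is quasiconformal (locally off $\mathcal B_f$ with a uniform constant, then globally via an absolute-continuity argument on curves). This sidesteps Koebe's uniformization and, crucially, the whole extension across the branch set, since $u$ is defined on all of $X$ from the start. Your construction is the classical Sto\"ilow-type one: a uniformly quasiconformal atlas from local inverses of $f$, the induced Riemann surface structure on $X\setminus B_f$, Koebe, and then filling in the punctures. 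It is more self-contained topologically but pushes all the difficulty into the branch points. (Two small points: it is part (2), not part (1), of Theorem \ref{thm:main} that converts bounded $K_f$ into bounded $C_f$; and the measurable Riemann mapping step is vacuous, since your charts are restrictions of the single map $f$ and the transition maps are identities, so the atlas is already conformal.)

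Two steps are asserted that are not automatic in this setting. First, the claim that the countable set $B_f$ ``meets only a curve family of modulus zero'' does not follow from countability: on a metric surface the family of nonconstant curves through a single point can have positive modulus (collapse a closed disc in $\R^2$ to a point to see this). This is exactly the issue the paper's proof confronts, and it resolves it differently: a rectifiable curve meets the discrete set $\mathcal B_f$ only finitely often, so a curve on which the upper gradient inequality fails must contain a subcurve in $X\setminus\mathcal B_f$ on which it fails, and families of curves with a subcurve in a null family are null. Alternatively, you can rescue your statement using your own estimate $\mod\Gamma_s\to0$, which shows directly that nonconstant curves through a branch point form a null family; either way the step needs an argument. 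Second, the inequality $\mod\Gamma_s\le K'\mod f(\Gamma_s)$ is a $K_O$-type inequality for the \emph{branched} map $f$, which appears nowhere in the paper; it must be proved from $(\rho_f^u)^2\le C J_f$, Proposition \ref{prop:area-ineq-jac}, and the bounded local multiplicity of $f$ near $x_0$, together with the fact that $(\rho\circ f)\rho_f^u$ is admissible for $\Gamma_s$ up to an exceptional family. This is doable with the paper's tools but is a lemma, not a remark. Finally, a technical caveat: balls in a metric surface need not be Jordan domains, so $B(x_0,r)\setminus\overline{B(x_0,s)}$ need not be a ring; the shrinking argument should be run on $\Omega\setminus\{x_0\}$ for a Jordan neighborhood $\Omega$ of $x_0$ with $\overline\Omega\cap B_f=\{x_0\}$, using the curve families $\Gamma_s$ rather than the sets themselves. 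With these repairs your argument goes through, but as written these are real gaps rather than routine omissions.
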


Non-homeomorphic maps are easier to construct than homeomorphisms, so Theorem \ref{thm:reciprocal} offers flexibility for finding quasiconformal parametrizations of a given surface. Theorem \ref{thm:reciprocal} is sharp in the following sense: There is no $p \geq 1$ for which the existence of a non-constant $f \in \operatorname{FDP}(X,\R^2)$ with $K_f\in\Lloc^p(X)$ implies the existence of a quasiconformal homeomorphism $\phi:X \to U$ onto a domain $U\subset\R^2$, see \cite{MR23}*{Proposition 6.1}.

Theorem \ref{thm:main} also allows the extension of the classical  \emph{Sto\"ilow factorization theorem} (see \cite{AIM09}*{Chapter 5.5}, \cite{LP20}) to our setting. 

\begin{thm}\label{thm:factorization}
    Every non-constant quasiregular map $f : X \to \R^2$ admits a factorization $f=g\circ v$, where $v\colon X\to V$ is a quasiconformal homeomorphism onto a domain $V\subset\R^2$ and $g\colon V\to\R^2$ is complex analytic.
\end{thm}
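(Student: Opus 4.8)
The plan is to reduce the statement to the classical Stoïlow factorization theorem in the plane by first uniformizing the source. Let $f\colon X\to\R^2$ be a non-constant quasiregular map, so by definition $f\in\operatorname{FDP}(X,\R^2)$ with $K_f$ uniformly bounded; in particular $K_f\in L^1_{\mathrm{loc}}(X)$, and by \cite{MR23} $f$ is continuous, discrete and open. Applying Theorem \ref{thm:reciprocal} we obtain a quasiconformal homeomorphism $\phi\colon X\to U$ onto a domain $U\subset\R^2$. The natural candidate factorization is through $v:=\phi$ and $g:=f\circ\phi^{-1}\colon U\to\R^2$. The first task is to check that $g$ is again quasiregular, now as a map between planar domains: by Corollary \ref{cor:qc-equivalence}, $\phi^{-1}$ is quasiconformal, hence lies in $N^{1,2}_{\mathrm{loc}}$ and is sense-preserving, and a chain-rule/composition argument for Newton--Sobolev maps together with the bounds in Theorem \ref{thm:main} (relating $K$, $C$ and the analytic distortion) shows that the composition $g=f\circ\phi^{-1}$ has uniformly bounded analytic distortion on $U$. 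Since $U$ is a planar domain, $g$ is then quasiregular in the classical Euclidean sense.

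With $g\colon U\to\R^2$ a classical planar quasiregular map, the classical Stoïlow factorization theorem (see \cite{AIM09}*{Chapter 5.5}, \cite{LP20}) applies: $g=h\circ w$ where $w\colon U\to W$ is a quasiconformal homeomorphism onto a domain $W\subset\R^2$ and $h\colon W\to\R^2$ is holomorphic. Combining, $f=g\circ\phi=h\circ(w\circ\phi)$. Now $w\circ\phi\colon X\to W$ is a composition of a quasiconformal homeomorphism $X\to U$ with a planar quasiconformal homeomorphism $U\to W$, hence is itself a quasiconformal homeomorphism from $X$ onto the domain $W\subset\R^2$ (the composition property for quasiconformality along paths, hence for all three equivalent notions via Corollary \ref{cor:qc-equivalence}, follows from the chain rule for upper and lower gradients). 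Setting $v:=w\circ\phi$ and $g:=h$ yields the desired factorization, with $g=h$ complex analytic.

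The main obstacle is the composition/chain-rule bookkeeping in the non-smooth metric setting: one must verify that $f\circ\phi^{-1}$ genuinely lies in $N^{1,2}_{\mathrm{loc}}(U,\R^2)$ and that its minimal upper gradient and Jacobian transform correctly under the quasiconformal change of variables $\phi^{-1}$, so that the analytic distortion of $g$ is controlled by $C_f$ and the distortion of $\phi$. This requires a change-of-variables formula for quasiconformal homeomorphisms between metric surfaces and Euclidean domains — in particular that $\phi^{-1}$ maps null sets to null sets (Lusin's condition $(N)$) and that $J_{\phi^{-1}}=1/(J_\phi\circ\phi^{-1})$ almost everywhere, which follows from Corollary \ref{cor:jacohomeo} and the geometric definition via Williams' theorem. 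Once this transformation rule is in place, everything else is the classical planar theory plus the equivalences already established in Corollary \ref{cor:qc-equivalence}. A secondary point to be careful about is that quasiregularity is preserved under post-composition with quasiconformal homeomorphisms and that the uniform bound on distortion is not lost in the process; this is again a consequence of the quantitative estimates in Theorem \ref{thm:main} and the subadditive behaviour of distortion under composition.
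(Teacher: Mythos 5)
Your proposal is correct and follows essentially the same route as the paper: uniformize $X$ via Theorem \ref{thm:reciprocal}, transfer $f$ to the planar quasiregular map $f\circ\phi^{-1}$ on $U\subset\R^2$, and apply the classical planar factorization (the paper invokes the measurable Riemann mapping theorem, which is the standard tool behind the classical Sto\"ilow theorem you cite). The chain-rule step you flag as the main obstacle is in fact immediate: in the proof of Theorem \ref{thm:reciprocal} one has $\phi=u^{-1}$ for the weakly quasiconformal parametrization $u$ of Theorem \ref{thm:unif}, so $f\circ\phi^{-1}=f\circ u=h$ is quasiregular directly by Theorem \ref{thm:unif}\ref{thm:unif-3}, with no separate composition argument needed.
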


\textbf{Acknowledgments.} Part of this research was conducted while the first named author was visiting University of Jyväskylä. She wishes to thank the department and staff for their hospitality.

\section{Preliminaries} \label{section:prel}

\subsection{Basic definitions and notations}
Let $(X,d)$ be a metric space. We denote the \emph{open ball} in $X$ of radius $r>0$ centered at a point $x\in X$ by $B(x,r)$. If $B=B(x,r)$ is a ball and $k>0$, we denote by $kB$ the ball $B(x,kr)$. We say that a subdomain $\Omega$ of $X$ is \emph{compactly contained} in $X$ if the closure $\overline{\Omega}$ is compact. Given a set $A \subset X$ and $\delta>0$, we denote the closed $\delta$-neighborhood of $A$ in $X$ by $N_\delta(A)$. 

The \emph{image} of a curve $\gamma$ in $X$ is indicated by $|\gamma|$ and the \emph{length} by $\ell(\gamma)$. A curve $\gamma$ is \emph{rectifiable} if $\ell(\gamma)<\infty$ and \emph{locally rectifiable} if each of its compact subcurves is rectifiable. If $\gamma\colon[a,b]\to X$ is rectifiable, then for almost every $t\in[a,b]$ we can define the \emph{metric differential} of $\gamma$ at $t$ by $$\gamma'(t):=\lim_{s\to t,\,s\neq t}\frac{d(\gamma(t),\gamma(s))}{|t-s|}.$$ A curve $\gamma\colon[0,\ell(\gamma)]\to X$ is \emph{parametrized by arclength} if $\ell(\gamma|_I)=|I|_1$ for every interval $I\subset[0,\ell(\gamma)]$. Here and later on, $|\cdot|_n$ denotes the \emph{$n$-dimensional Lebesgue measure}. 

For $s\geq 0$, we denote the \emph{$s$-dimensional Hausdorff measure} of a set $A\subset X$ by $\mathcal{H}^s(A)$ or $\mathcal{H}^s_X(A)$ if we want to emphasize that $A$ is a subset of $X$. The normalizing constant is chosen so that $|U|_n=\mathcal{H}^n(U)$ for open subsets $U$ of $\R^n$.

If $X$ is a metric surface, we equip $X$ with $\mathcal{H}^2$. Let $L^p(X)$ ($L^p_{\text{loc}}(X)$) denote the \emph{space of $p$-integrable (locally $p$-integrable) Borel functions} from $X$ to $\R\cup\{-\infty,\infty\}$. Here locally $p$-integrable means $p$-integrable on compact subsets.

\subsection{Modulus}\label{section:modulus}
Let $X$ be a metric surface and $\Gamma$ a family of curves in $X$. A Borel function $g\colon X \to [0,\infty]$ is \emph{admissible} for $\Gamma$ if $\int_{\gamma}g\, ds\geq 1$ for all locally rectifiable curves $\gamma\in \Gamma$. We define the ($2$-)\emph{modulus} of $\Gamma$ as 
$$\mod \Gamma = \inf_g \int_X g^2 \, d\mathcal H^2,$$
where the infimum is taken over all admissible functions $g$ for $\Gamma$. If there are no admissible functions for $\Gamma$ we set $\mod \Gamma = \infty$. A property is said to hold for \emph{almost every} curve in $\Gamma$ if it holds for every curve in $\Gamma\setminus\Gamma_0$ for some family $\Gamma_0\subset \Gamma$ with $\mod(\Gamma_0)=0$.

\subsection{Metric Sobolev spaces}\label{section:Sobolev}
Let $f\colon X\to Y$ be a map from metric surface $X$ to a metric space $Y$. A Borel function $\rho^u \colon X\to [0,\infty]$ is an \textit{upper gradient} of $f$ if 
\begin{align}\label{ineq:upper_gradient}
    d_Y(f(x),f(y)) \leq \int_{\gamma} \rho^u \, ds
\end{align}
for all $x,y\in X$ and every rectifiable curve $\gamma$ in $X$ joining $x$ and $y$. If the \textit{upper gradient inequality} \eqref{ineq:upper_gradient} holds for almost every rectifiable curve $\gamma$ in $X$ joining $x$ and $y$ we call $\rho^u$ \emph{weak upper gradient} of $f$. 

The Sobolev space $N^{1,2}(X,Y)$ is the space of Borel maps $f \colon X \to Y$ with upper gradient $\rho^u \in L^2(X)$ such that $x \mapsto d_Y(y,f(x))$ is in $L^2(X)$ for some $y \in Y$. The space $\Nloc(X, Y)$ is defined in the obvious manner. Each $f\in \Nloc(X,Y)$ has a \textit{minimal weak upper gradient} $\rho_f^u$, i.e.,\ for any other weak upper gradient $\rho^u$ we have $\rho_f^u\leq \rho^u$ almost everywhere. Moreover, $\rho^u_f$ is unique up to a set of measure zero, see \cite{HKST:15}*{Theorem 6.3.20}. We refer to the monograph \cite{HKST:15} for more background on metric Sobolev spaces.

\subsection{Metric differentiability}\label{sec:metric-diff}
Let $X$ be a metric surface and $U\subset\R^2$ a domain. We say that $u\colon U\to X$ is \emph{approximately metrically differentiable} at $z\in U$ if there exists a seminorm $N_z$ on $\R^2$ for which
    $$\ap\lim_{y\to z}\frac{d_X(u(y),u(z))-N_z(y-z)}{|y-z|}=0.$$
Here $\ap\lim$ denotes the approximate limit (see \cite{EG92}*{Section 1.7.2}). If such a seminorm exists, it is unique and is called \emph{approximate metric derivative} of $u$ at $z$, denoted $\ap\md u_z$. The \emph{Jacobian} of $\ap\md u_z$ is  $${J(\ap\md u_z)=\frac{\pi}{|B_z|_2}},$$ whenever $\ap\md u_z$ is a norm and $J(\ap\md u_z)=0$ otherwise. Here $B_z$ refers to the closed unit ball in $(\R^2,\apmd u_z)$. Every map $u\in \Nloc(U,X)$ is approximately metrically differentiable at almost every $z\in U$, see \cite{LWarea}*{Proposition 4.3}.

Let $U\subset\R^2$ be a domain and $u\in\Nloc(U,X)$. By \cite{HKST:15}*{Theorem 8.1.49}, $U$ is the union of pairwise disjoint Borel sets $G_j^u$, $j=0,1,\ldots$, so that $|G_0^u|_2=0$ and $u|_{G_j^u}$ is $j$-Lipschitz continuous for every $j \geq 1$. Recall the classical area formula following from \cite{Kar07}*{Theorem 3.2}. 

\begin{thm}[Classical area formula]\label{thm:classical-area-formula}
    For $u\in\Nloc(U,X)$ and every measurable set $A\subset U\setminus G_0^u$ we have
    \begin{align*}
       \int_A J(\ap\md u_z) \,dz=\int_X N(x,u,A)\,d\mathcal H^2.
    \end{align*}
\end{thm}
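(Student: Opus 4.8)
=== PROOF PROPOSAL ===

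\medskip

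The final statement is the classical area formula, Theorem \ref{thm:classical-area-formula}, which asserts that for $u\in\Nloc(U,X)$ and measurable $A\subset U\setminus G_0^u$ we have $\int_A J(\ap\md u_z)\,dz=\int_X N(x,u,A)\,d\hm^2$.

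\textbf{Approach.} The plan is to reduce to the Lipschitz case via the decomposition $U\setminus G_0^u=\bigcup_{j\geq 1}G_j^u$ into Borel pieces on which $u$ is $j$-Lipschitz, and then invoke the metric-space area formula for Lipschitz maps (Kirchheim's theorem, cited as \cite{Kar07}*{Theorem 3.2}). Since both sides of the claimed identity are countably additive in $A$, it suffices to prove the formula for $A\subset G_j^u$ with $j$ fixed; replacing $u$ by $u|_{G_j^u}$ we may assume $u$ is Lipschitz on the relevant set. First I would recall that a Lipschitz map on a subset of $\R^2$ into a metric space is metrically differentiable almost everywhere in the sense of Kirchheim, and that on $G_j^u$ the approximate metric derivative $\apmd u_z$ agrees almost everywhere with the (genuine) metric derivative of the Lipschitz restriction $u|_{G_j^u}$ — this is where one uses that approximate differentiability is a local, measure-theoretic notion insensitive to modification on a null set, together with the fact that a point of density $1$ of $G_j^u$ at which $u|_{G_j^u}$ is metrically differentiable is also a point of approximate metric differentiability of $u$ with the same derivative seminorm.

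\textbf{Key steps, in order.} (1) State Kirchheim's area formula: for a Lipschitz map $v\colon E\to X$ with $E\subset\R^2$ measurable, $\int_E J(\md v_z)\,dz=\int_X N(x,v,E)\,d\hm^2$, where $J(\md v_z)$ is the Jacobian of the metric derivative seminorm normalized exactly as in the definition above (the constant $\pi/|B_z|_2$ makes the Hausdorff-measure normalization match). (2) Apply this with $v=u|_{G_j^u}$ and $E=A\cap G_j^u$. (3) Identify $\md v_z=\apmd u_z$ for a.e.\ $z\in G_j^u$, so the left-hand integrands agree a.e.\ on $A\cap G_j^u$. (4) Observe that $N(x,u|_{G_j^u},A\cap G_j^u)$ counts preimages of $x$ under $u$ lying in $A\cap G_j^u$, so summing over $j$ gives $\sum_j N(x,u|_{G_j^u},A\cap G_j^u)=N(x,u,A\setminus G_0^u)=N(x,u,A)$ since $A\subset U\setminus G_0^u$; here one uses the monotone convergence theorem to sum the integrals. (5) Note that $G_0^u$ contributes nothing on the left since $|G_0^u|_2=0$, consistent with the hypothesis $A\cap G_0^u=\emptyset$.

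\textbf{Main obstacle.} The technical heart is Step (3): verifying that the approximate metric derivative of $u$ coincides a.e.\ on each $G_j^u$ with the honest metric derivative of the Lipschitz piece $u|_{G_j^u}$. This requires a density-point argument — at a.e.\ point of $G_j^u$ the set has Lebesgue density $1$, so the approximate limit defining $\apmd u_z$ can be computed along $G_j^u$, where $u$ coincides with its Lipschitz restriction and Kirchheim's differentiability applies. One must also check the measurability of $z\mapsto J(\apmd u_z)$ and of $x\mapsto N(x,u,A)$ so that both integrals make sense; measurability of the multiplicity function follows from \cite{Kar07} applied piecewise, and measurability of the Jacobian from that of the approximate metric derivative established in \cite{LWarea}*{Proposition 4.3}. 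Everything else is routine countable additivity and monotone convergence.
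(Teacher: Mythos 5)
Your proof is correct. The paper does not actually prove this statement --- it is recalled as a known result and attributed to Karmanova \cite{Kar07}*{Theorem 3.2} --- and your reduction to Kirchheim's Lipschitz area formula via the decomposition of $U\setminus G_0^u$ into the pieces $G_j^u$, together with the density-point identification of $\apmd u_z$ with the metric derivative of each Lipschitz restriction and monotone convergence over $j$, is precisely the standard derivation underlying that citation.
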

Here $N(x,u,A)$ denotes the number of preimages of $x$ under $u$ in $A$. If $u\in\Nloc(U,X)$ is a homeomorphism, Theorem \ref{thm:classical-area-formula} implies $J_u(z)=J(\apmd u_z)$ for almost every $z\in U$; recall the definition of $J_u(z)$ in \eqref{area_limsup}.

\subsection{Lower gradients and distortion along paths}
Let $X$ and $Y$ be metric surfaces. We call a Borel function $\rho^l\colon X\to[0,\infty]$ a \emph{lower gradient} of $f\in\Nloc(X,Y)$ if $\rho^l\leq\rho_f^u$ almost everywhere and
\begin{align}\label{ineq:lower_gradient}
    \ell(f\circ\gamma)\geq\int_{\gamma}\rho^l \, ds
\end{align}
for every rectifiable curve $\gamma$ in $X$ such that $f$ is continuous along $\gamma$. If the \textit{lower gradient inequality} \eqref{ineq:lower_gradient} holds for almost every rectifiable $\gamma$ on which $f$ is continuous, we call $\rho^l$ \emph{weak lower gradient} of $f$. Note that $0$ is always a lower gradient. Up to exceptional curve families of zero modulus, the upper gradient inequality \eqref{ineq:upper_gradient} is equivalent to the converse inequality in \eqref{ineq:lower_gradient}. Each $f\in \Nloc(X,Y)$ has a \emph{maximal weak lower gradient} $\rho_f^l$, i.e.,\ for any other weak lower gradient $\rho^l$ we have $\rho_f^l\geq\rho^l$ almost everywhere, that is uniquely defined up to sets of measure zero, see \cite{MR23}*{Section 7}.

Mappings of finite distortion along paths, i.e., class $\operatorname{FDP}(X,Y)$ (defined in the introduction), were introduced in \cite{MR23}. We now state the most important results from \cite{MR23} that will be repeatedly used throughout this work.

\begin{proposition}[\cite{MR23}*{Remarks 2.3 and 2.8}]\label{prop:sense-pres}
    Let $f\in\Nloc(X,\R^2)$ be sense-preserving. Then, $f$ is continuous and satisfies Lusin's condition (N). 
\end{proposition}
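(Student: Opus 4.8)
The statement has two parts — the existence of a continuous representative and Lusin's condition (N) — and I would establish continuity first, since the degree-theoretic input to (N) presupposes it. The common engine is a lemma combining the Sobolev regularity of $f$ with the sense-preserving hypothesis. Since $f\in\Nloc(X,\R^2)$, it is absolutely continuous along $\mod$-a.e.\ curve, so by Fuglede's lemma, for $\mod$-a.e.\ Jordan curve $\sigma$ bounding a domain $W$ with $\overline{W}$ compact in $X$ the curve $f\circ\sigma$ is continuous and rectifiable with $\ell(f\circ\sigma)\le\int_\sigma\rho_f^u\,ds<\infty$; call such a $W$ \emph{good}. The hypothesis enters through the fact that $\deg(y,f,W)\ge1$ for all $y\in f(W)\setminus f(|\sigma|)$ forces $f(W)$ into the bounded complementary components of the closed curve $\Gamma:=f\circ\sigma$, a point of the unbounded component having degree $0$. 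The union of $\Gamma$ with these components has diameter $\diam\Gamma$, hence lies in a ball of radius $\diam\Gamma$; since $f(|\sigma|)$ is $\mathcal{H}^2$-null and $\diam\Gamma\le\ell(\Gamma)\le\int_\sigma\rho_f^u\,ds$, we obtain for every good $W$
$$\diam f(\overline{W})\le\int_\sigma\rho_f^u\,ds\qquad\text{and}\qquad\mathcal{H}^2\bigl(f(\overline{W})\bigr)\le\pi\Bigl(\int_\sigma\rho_f^u\,ds\Bigr)^2 .$$

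\textbf{Continuity.} Fix $x_0\in X$. I would produce around $x_0$ a nested family of good Jordan domains $W_t$, $0<t<1$, with $\bigcap_{0<t<1}\overline{W_t}=\{x_0\}$, and establish a Gehring-type length--area inequality
$$\int_0^1\Bigl(\int_{\partial W_t}\rho_f^u\,ds\Bigr)^2\,\frac{dt}{t}\le C\int_{W_1}(\rho_f^u)^2\,d\mathcal{H}^2<\infty ,$$
from which $\int_{\partial W_t}\rho_f^u\,ds\to0$ along a sequence of radii $t_k\to0$. By the first displayed estimate $\diam f(\overline{W_{t_k}})\to0$, and nestedness of the $W_t$ upgrades this to $\diam f(\overline{W_t})\to0$ as $t\to0$. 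Hence the essential oscillation of $f$ vanishes at every point of $X$, so $f$ coincides $\mathcal{H}^2$-a.e.\ with a (unique) continuous map.

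\textbf{Condition (N).} With a continuous representative fixed, let $A$ be Borel with $\overline{A}$ compact in $X$ and $\mathcal{H}^2(A)=0$. Cover $A$ by good Jordan domains $W_i$ contained in a prescribed neighborhood $N_\delta(A)$, using a Fubini argument to choose the ``radii'' so that $\sum_i\bigl(\int_{\partial W_i}\rho_f^u\,ds\bigr)^2\le C\int_{N_\delta(A)}(\rho_f^u)^2\,d\mathcal{H}^2$. The second displayed estimate then gives
$$\mathcal{H}^2\bigl(f(A)\bigr)\le\sum_i\mathcal{H}^2\bigl(f(\overline{W_i})\bigr)\le C\pi\int_{N_\delta(A)}(\rho_f^u)^2\,d\mathcal{H}^2 ,$$
and since $(\rho_f^u)^2\in\Lloc^1(X)$ is absolutely continuous with respect to $\mathcal{H}^2$ and $\delta>0$ is arbitrary, $\mathcal{H}^2(f(A))=0$. (Alternatively, once one knows $J_f\in\Lloc^1(X)$ and the area inequality $\int_{\R^2}N(y,f,\cdot)\,dy\le\int_{(\cdot)}J_f\,d\mathcal{H}^2$, condition (N) follows from $\mathcal{H}^2(f(A))\le\int_{\R^2}N(y,f,A)\,dy$.)

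\textbf{Main obstacle.} The substantive difficulty is geometric: a metric surface need not be Ahlfors $2$-regular, its metric balls need not be Jordan domains, and no Besicovitch covering theorem is available, so the two displayed coarea/covering inequalities cannot be obtained on $X$ naively. I expect the crux is exactly the construction of a suitable nested family $\{W_t\}$ around an arbitrary point together with the length--area inequality for it — most plausibly by transferring the problem to a planar domain via a parametrization with good analytic properties (a quasiconformal one where the surface allows, or a weakly quasiconformal Sobolev one in general, to which the classical area and coarea formulas, Theorem \ref{thm:classical-area-formula}, apply) and pushing conclusions back through the resulting monotone proper quotient map — and in verifying that Fuglede's lemma still yields enough good curves and that the integrals $\int_{\partial W_t}\rho_f^u\,ds$ remain controlled despite the possible degeneration of modulus along the parametrization.
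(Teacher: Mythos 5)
The paper does not prove this proposition at all: it is imported verbatim from \cite{MR23}*{Remarks 2.3 and 2.8}, so there is no in-paper argument to compare against. Judged on its own terms, your strategy is the standard one and is essentially the one used in the cited reference: the degree hypothesis forces $f(W)$ into the bounded complementary components of $f\circ\partial W$, giving $\diam f(\overline{W})\le \ell(f\circ\partial W)$ and the isoperimetric-type bound $|f(\overline{W})|_2\lesssim\bigl(\int_{\partial W}\rho_f^u\,ds\bigr)^2$; continuity then follows from an oscillation/length--area estimate and condition (N) from a covering argument. Those two displayed estimates are correct as stated.

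The weak point is exactly the one you flag yourself: on a general metric surface there is no construction in your write-up of the nested family of \emph{good} Jordan domains $W_t$ shrinking to an arbitrary point, nor of the Gehring length--area inequality for them, nor of the fine cover by good domains with summable boundary integrals needed for (N). Metric balls need not have rectifiable (or even connected) boundaries, so this is not a routine step --- it is the entire content of the proposition, and ``I expect the crux is\ldots'' leaves it unproven. The efficient repair, and the route consistent with how this paper uses the result, is to push the whole problem to the plane rather than build the $W_t$ on $X$: take the weakly quasiconformal parametrization $u\in\Nloc(U,X)$ of Theorem \ref{thm:unif}, note that $h=f\circ u$ is a sense-preserving map in $\Nloc(U,\R^2)$ on a planar domain, and run your argument for $h$ using round disks $D_t\subset U$, where Fuglede and the classical length--area inequality apply without obstruction. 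Continuity of $f$ then descends because $h$ is constant on the fibers of the continuous, monotone, surjective (hence quotient) map $u$; condition (N) descends by writing $f(E)\subset h(G_0)\cup h(u^{-1}(E)\setminus G_0)$, killing the first set by condition (N) for $h$, and killing the second via the area formula (Theorem \ref{thm:classical-area-formula}) applied to $u$ and to $h$, which shows $J(\apmd h_z)=0$ a.e.\ on $u^{-1}(E)\setminus G_0$ when $\hm^2(E)=0$. Your parenthetical alternative for (N) via $J_f$ and Proposition \ref{prop:area-ineq-jac} is circular in context, since that proposition is proved later and already invokes condition (N).
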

Here a map $f\colon X\to Y$ satisfies \emph{Lusin's condition (N)} if $\hm_Y^2(f(E))=0$ for every $E\subset X$ with $\hm_X^2(E)=0$. Recall that $f$ is \emph{discrete}, if $f^{-1}(y)$ is a discrete set in $X$ for every $y \in Y$. 

\begin{thm}[\cite{MR23}*{Theorem 1.2}]\label{thm:main-MR23}
    Let $f \in \operatorname{FDP}(X,\R^2)$ be non-constant with $K_f\in\Lloc^1(X)$. Then $f$ is open and discrete. 
\end{thm}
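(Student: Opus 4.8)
The plan is to reduce to the planar case, where this is the theorem of Iwaniec and \v{S}ver\'ak: a non-constant Sobolev map of finite distortion from a planar domain with locally integrable distortion is open and discrete. Since openness and discreteness are local, we may fix a ball in $X$ — still a metric surface, homeomorphic to a disk, on which $f$ remains a non-constant $\operatorname{FDP}$ map with $L^1_{\mathrm{loc}}$ distortion — and assume $X$ itself is homeomorphic to a disk. By the weakly quasiconformal uniformization of metric surfaces (Rajala \cite{Raj:17}; Ntalampekos--Romney \cite{NR:21}) there are then a planar domain $\Omega$ and a continuous, surjective, proper, monotone map $u\in\Nloc(\Omega,X)$ with $\rho^u_u\le\kappa\,\rho^l_u$ almost everywhere, $\kappa$ a universal constant. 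Put $h:=f\circ u\colon\Omega\to\R^2$.

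The core of the proof is to show that $h$ is a non-constant planar map of finite distortion with \emph{integrable} distortion. It is continuous, non-constant (as $u$ is surjective and $f$ non-constant), and sense-preserving, since the local topological degree is multiplicative under composition and $u,f$ are sense-preserving. Composing the upper gradient inequality for $f$ with that for $u$ along almost every curve (and dually for lower gradients) yields the chain-rule estimates $\rho^u_h\le(\rho^u_f\circ u)\,\rho^u_u$ and $\rho^l_h\ge(\rho^l_f\circ u)\,\rho^l_u$ almost everywhere; in particular $h\in\operatorname{FDP}(\Omega,\R^2)$ with $K_h\le\kappa\,(K_f\circ u)$. Combining $\rho^u_u\le\kappa\,\rho^l_u$ with the pointwise bound $(\rho^l_u)^2\le J_u$ (immediate from metric differentiability of $u$ and the definition of $J(\apmd u_z)$) gives $(\rho^u_h)^2\le\kappa^2(\rho^u_f\circ u)^2\,J_u$; integrating over a relatively compact $\Omega'\subset\Omega$ and using the area formula (Theorem \ref{thm:classical-area-formula}, in weighted form) for $u$ bounds this by $\kappa^2\int_X(\rho^u_f)^2\,N(x,u,\Omega')\,d\mathcal H^2<\infty$, so $h\in\Nloc(\Omega,\R^2)$. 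It remains to check $K_h\in L^1_{\mathrm{loc}}(\Omega)$: on $\{J_u=0\}$ both weak gradients of $u$, hence of $h$, vanish and (by the area formula) this set maps onto an $\mathcal H^2$-null subset of $X$, while on the complement one pushes $K_f\in L^1_{\mathrm{loc}}(X)$ forward through $u$ using the area formula together with the essential injectivity and measure-regularity of the uniformizing map.

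Granting this, the classical Euclidean equivalence of finite distortion along paths with finite analytic distortion shows $h$ has locally integrable analytic distortion, so by Iwaniec--\v{S}ver\'ak (see \cite{AIM09}) $h$ is open and discrete. These properties transfer to $f$: for open $V\subseteq X$ we have $f(V)=h(u^{-1}(V))$, open since $u^{-1}(V)$ is open and $h$ is open; and for $y\in\R^2$ the set $u^{-1}(f^{-1}(y))=h^{-1}(y)$ is discrete in $\Omega$, so — $u$ being monotone — for each $x\in f^{-1}(y)$ the fibre $u^{-1}(x)$ is a continuum contained in the discrete set $h^{-1}(y)$ and hence a singleton; thus $u$ restricts to a proper bijection $h^{-1}(y)\to f^{-1}(y)$ and $f^{-1}(y)$ is discrete. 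Hence $f$ is open and discrete.

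I expect the main obstacle to be the second step — showing that $h=f\circ u$ is a bona fide planar map of finite distortion with \emph{integrable} distortion — and in particular the claim $K_h\in L^1_{\mathrm{loc}}(\Omega)$: this requires pushing the $L^1_{\mathrm{loc}}$ bound on $K_f$ forward through the non-injective uniformizing map $u$, whose Jacobian may degenerate on a positive-measure set, and it is exactly here that integrability of the distortion (rather than mere finiteness) is indispensable and that extra properties of (or a careful choice of) $u$ enter. Once $h$ is in hand, the topological transfer in the third step is soft. A more self-contained alternative would instead run the classical scheme directly on $X$ — finite multiplicity via the area formula, lightness of $f$ via a modulus estimate that collapses without integrable distortion, then openness, then discreteness by Sto\"ilow's theorem — but the reduction above seems the cleanest.
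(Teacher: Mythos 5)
Your architecture (uniformize by $u\colon\Omega\to X$, compose, apply Iwaniec--\v{S}ver\'ak to $h=f\circ u$, transfer back) is natural, and the topological transfer at the end is sound, but the step you yourself flag as the main obstacle is a genuine gap that cannot be closed as stated: $K_f\in\Lloc^1(X)$ does \emph{not} imply $K_h\in\Lloc^1(\Omega)$. The only quantitative control available on $u$ is the area formula, which bounds the \emph{Jacobian-weighted} pushforward
$\int_{\Omega'}K_f(u(z))\,J(\apmd u_z)\,dz=\int_X K_f\cdot N(x,u,\Omega')\,d\hm^2<\infty$;
it says nothing about $\int_{\Omega'}K_f(u(z))\,dz$. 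For a general metric surface the parametrization $u$ can have $J(\apmd u_z)$ positive but arbitrarily small on sets of positive planar measure (this degeneration is exactly what distinguishes weakly quasiconformal from quasiconformal parametrizations), and there $K_f\circ u$ is uncontrolled by the $\hm^2_X$-integral of $K_f$: if $u$ compresses a planar set $S$ by a factor $\varepsilon$ in area and $K_f\approx\varepsilon^{-1}$ on $u(S)$, then $\int_{u(S)}K_f\,d\hm^2\approx|S|_2$ while $\int_S K_f(u(z))\,dz\approx|S|_2/\varepsilon$; iterating over scales gives $K_f\in\Lloc^1(X)$ with $K_f\circ u\notin\Lloc^1(\Omega)$. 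Moreover, since $u$ is $\sqrt{2}$-quasiregular the collapse is isotropic, so $K_h$ is genuinely comparable to $K_f\circ u$ rather than merely bounded by it: the composed map really need not have locally integrable distortion, and "essential injectivity and measure-regularity" of $u$ cannot rescue the claim, because the obstruction is the degeneracy of $J_u$, not multiplicity. So the hypothesis of Iwaniec--\v{S}ver\'ak simply may fail for $h$, and the reduction breaks at its central point.

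The workable route is essentially the alternative you mention and set aside: keep the integrability of $K_f$ where it lives, namely against $\hm^2_X$ on subsets of $X$, and use it there to prove that $f$ is light (no fiber contains a nondegenerate continuum) via coarea/modulus estimates; openness and discreteness of a continuous, sense-preserving, light map between surfaces then follow from the purely topological theorems of Titus--Young and Sto\"ilow. This is how the hypothesis $K_f\in\Lloc^1(X)$ enters in the source \cite{MR23}, and you can see the same mechanism inside the present paper in the proof of Lemma \ref{lem:E_M}: there $K_f$ is integrated over balls of $X$ and controlled through its maximal function on $X$, combined with the coarea inequality and the area inequality (Theorem \ref{thm:area-ineq}) --- it is never composed with $u$ and integrated against planar Lebesgue measure. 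Your chain-rule computations, the estimate $(\rho^l_u)^2\le J(\apmd u_z)$, and the final monotonicity argument for transferring openness and discreteness back through $u$ are all fine; the proof fails only, but fatally, at the claim $K_h\in\Lloc^1(\Omega)$.
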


Let $U \subset \R^2$ be a domain. The \emph{maximal and minimal stretches} of a map $h\in\Nloc(U,Y)$ at points of approximate differentiability are defined by 
\begin{eqnarray*} 
L_h(z)=\max\{\ap\md h_z(v):|v|=1\}, \quad  
l_h(z)=\min\{\ap\md h_z(v):|v|=1\}. 
\end{eqnarray*}

\begin{lemma}[\cite{MR23}*{Lemma 2.9}] \label{lem:MR23-2.9}
Let $h \in \Nloc(U,Y)$. Then $L_h$ and $l_h$ are representatives of the minimal weak upper gradient and the maximal weak lower gradient of $h$, respectively. Moreover, 
\begin{equation*}
2^{-1} L_h(z)l_h(z) \leq J(\ap\md h_z) \leq 
2 L_h(z)l_h(z)  
\end{equation*} 
at points $z\in U$ of approximate differentiability.  
\end{lemma}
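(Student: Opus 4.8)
The plan is to treat the two assertions by quite different means: identifying $L_h$ and $l_h$ with $\rho^u_h$ and $\rho^l_h$ is a curve-and-Fubini argument resting on the metric differentiability of Sobolev maps, while the comparison with the Jacobian is a short computation with planar seminorms.

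For the Jacobian comparison I would fix a point $z$ at which $h$ is approximately metrically differentiable and set $N=\ap\md h_z$. If $N$ is not a norm it vanishes on a line, so $l_h(z)=0=J(\ap\md h_z)$ and both inequalities are trivial; so assume $N$ is a norm with closed unit ball $B_z$, and estimate $|B_z|_2$ from both sides. Choosing a Euclidean unit vector $e_1$ with $N(e_1)=L_h(z)$, the point $e_1/L_h(z)$ is a boundary point of $B_z$ nearest the origin, so the supporting line of $B_z$ there is orthogonal to $e_1$; hence $B_z$ lies in the slab $\{v:|\langle v,e_1\rangle|\le 1/L_h(z)\}$, and since $l_h(z)|v|\le N(v)\le 1$ on $B_z$ it also lies in the disc of radius $1/l_h(z)$. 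Thus $B_z$ sits in a rectangle of area $4/(L_h(z)l_h(z))$, giving $J(\ap\md h_z)=\pi/|B_z|_2\ge (\pi/4)L_h(z)l_h(z)\ge 2^{-1}L_h(z)l_h(z)$. For the reverse bound, pick a unit $v_1$ with $N(v_1)=l_h(z)$ and a unit $v_2\perp v_1$; then $v_1/l_h(z)$ and $v_2/N(v_2)$ belong to $B_z$ and span a rhombus of area $\ge 2/(L_h(z)l_h(z))$, so $|B_z|_2\ge 2/(L_h(z)l_h(z))$ and $J(\ap\md h_z)\le (\pi/2)L_h(z)l_h(z)\le 2L_h(z)l_h(z)$.

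For the first claim I would use the following consequence of \cite{LWarea} together with Fuglede's lemma: for all but a curve family of zero $2$-modulus, a curve $\gamma$ in $U$ parametrized by arclength has $h\circ\gamma$ absolutely continuous with $(h\circ\gamma)'(t)=\ap\md h_{\gamma(t)}(\gamma'(t))$ for a.e.\ $t$, where $\gamma'(t)\in\R^2$ is the velocity (of Euclidean norm $1$); in particular $l_h(\gamma(t))\le (h\circ\gamma)'(t)\le L_h(\gamma(t))$. Integrating gives $d_Y(h(\gamma(a)),h(\gamma(b)))\le\ell(h\circ\gamma)=\int_\gamma \ap\md h_{(\cdot)}(\gamma')\,ds$, which lies between $\int_\gamma l_h\,ds$ and $\int_\gamma L_h\,ds$; hence $L_h$ is a weak upper gradient, and $l_h$ is a weak lower gradient (and $l_h\le L_h$ a.e., so $l_h\le\rho^u_h$ a.e.). For minimality of $L_h$: if $\rho$ is any weak upper gradient, applying the upper gradient inequality to the subarcs of a segment $\gamma$ in a fixed direction $v$, $|v|=1$, gives $\ell(h\circ\gamma)\le\int_\gamma\rho\,ds$, hence $\int \ap\md h_{\gamma(t)}(v)\,dt\le\int\rho(\gamma(t))\,dt$ for a.e.\ such segment, and Fubini yields $\ap\md h_z(v)\le\rho(z)$ for a.e.\ $z$; letting $v$ run over a countable dense subset of the unit circle and using continuity of $w\mapsto\ap\md h_z(w)$ gives $L_h\le\rho$ a.e. Symmetrically, for a weak lower gradient $\rho^l$ one has $\int_\gamma\rho^l\,ds\le\ell(h\circ\gamma)=\int\ap\md h_{\gamma(t)}(v)\,dt$ along a.e.\ segment in direction $v$, so $\rho^l(z)\le\ap\md h_z(v)$ a.e., and taking the infimum over a dense set of directions gives $\rho^l\le l_h$ a.e. Thus $L_h=\rho^u_h$ and $l_h=\rho^l_h$ almost everywhere.

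The main obstacle is precisely the identity $(h\circ\gamma)'(t)=\ap\md h_{\gamma(t)}(\gamma'(t))$ for a.e.\ curve. The bound $(h\circ\gamma)'\le\rho^u_h(\gamma)|\gamma'|$ is immediate from the upper gradient inequality and the integrability of $\rho^u_h$ along a.e.\ curve, and along a.e.\ \emph{segment} the reverse inequality and the exact value of the derivative follow from approximate metric differentiability. Upgrading this from a.e.\ segment to a.e.\ curve requires the standard modulus estimate showing that the curves meeting the exceptional sets of the relevant approximate limits with positive upper density form a family of zero $2$-modulus; this is carried out via the Lipschitz decomposition $U=\bigcup_j G^h_j$. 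I would quote this step from \cite{LWarea} rather than reprove it, and regard it as the one genuinely nontrivial ingredient.
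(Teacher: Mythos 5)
The paper gives no proof of this statement—it is imported verbatim from \cite{MR23}*{Lemma 2.9}—but your argument is correct and follows essentially the route taken in that source: the convex-geometry sandwich of $B_z$ between a rhombus spanned by $\pm v_1/l_h(z)$, $\pm v_2/N(v_2)$ and a slab-intersect-disc rectangle yields the two-sided Jacobian comparison (with the sharper constants $\pi/4$ and $\pi/2$), while the curve-wise differentiability $(h\circ\gamma)'(t)=\apmd h_{\gamma(t)}(\gamma'(t))$ from \cite{LWarea} together with the segment-plus-Fubini argument identifies $L_h$ and $l_h$ with the minimal weak upper and maximal weak lower gradients. I see no gaps beyond the one nontrivial ingredient you explicitly delegate to \cite{LWarea}.
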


\subsection{Weakly quasiconformal parametrizations of metric surfaces}
The proof of Theorem \ref{thm:main} depends on the existence of a weakly quasiconformal parametrization of $X$ provided by \cite{NR22}. See also \cite{NR:21} and \cite{MW21}. The following theorem summarizes the main properties of a weakly quasiconformal parametrization and will be repeatedly applied within this work. A map $u\colon U\to X$ is called \emph{monotone}, if $u^{-1}(x)$ is connected for every $x\in X$.

\begin{thm}\label{thm:unif}
    If $X$ is a metric surface then there exists a continuous, surjective, sense-preserving and monotone map $u\in\Nloc(U,X)$, where $U$ is a domain in $\R^2$, such that
    \begin{enumerate}[label=(\roman*)]
        \item $u$ is $\sqrt{2}$-quasiregular. \label{thm:unif-1}
    \end{enumerate}
    Let $f\in\Nloc(X,Y)$ and $h:=f\circ u$. Then
    \begin{enumerate}[label=(\roman*)]
        \setcounter{enumi}{1}
        \item $h\in\Nloc(U,Y)$, and if $f$ is sense-preserving then so is $h$. \label{thm:unif-2}
    \end{enumerate}
    Moreover, if $f \in \operatorname{FDP}(X,Y)$, then
    \begin{enumerate}[label=(\roman*)]
        \setcounter{enumi}{2}
        \item $h \in \operatorname{FDP}(U,Y)$ with $K_h(z)\leq\sqrt{2}\,K_f(u(z))$ for almost every $z \in U$. \label{thm:unif-3} 
    \end{enumerate}
\end{thm}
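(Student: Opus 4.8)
The plan is to take the parametrization $u$ directly from \cite{NR22} and to derive assertions \ref{thm:unif-2} and \ref{thm:unif-3} from it by a chain-rule argument for the composition $h=f\circ u$. Assertion \ref{thm:unif-1} is part of the statement provided by \cite{NR22}: every metric surface $X$ carries a continuous, surjective, monotone, sense-preserving map $u\in\Nloc(U,X)$ from a domain $U\subset\R^2$ that is $\sqrt2$-quasiregular. Before touching $h$, I would record two consequences of this setup. First, since $u$ is monotone each fiber $u^{-1}(x)$ is a continuum, and a non-degenerate continuum that meets an open set $U'$ meets it in infinitely many points; on the other hand the classical area formula (Theorem \ref{thm:classical-area-formula}) forces $N(\cdot,u,U')$ to be finite $\hm^2$-a.e. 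Hence $N(x,u,U')\in\{0,1\}$ for $\hm^2$-a.e.\ $x$ and every $U'\subset U$. Second, combining this multiplicity bound with Lusin's condition (N) for $u$ (Proposition \ref{prop:sense-pres}) and $\sqrt2$-quasiregularity, the standard argument gives a modulus inequality $\mod\Gamma\le C\mod u(\Gamma)$ for all curve families $\Gamma$ in $U$; in particular, if a curve family $\Gamma_0$ in $X$ has $\mod\Gamma_0=0$, then $\{\gamma\text{ in }U: u\circ\gamma\in\Gamma_0\}$ has zero modulus as well.

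Next I would set $L_u=\rho_u^u$, $l_u=\rho_u^l$ (Lemma \ref{lem:MR23-2.9}) and introduce the candidate gradients $\phi:=(\rho_f^u\circ u)\,L_u$ and $\psi:=(\rho_f^l\circ u)\,l_u$, and prove the corresponding gradient inequalities for $h$. For a.e.\ curve $\gamma$ in $U$, the curve $u\circ\gamma$ is rectifiable (as $\rho_u^u\in L^2_{\mathrm{loc}}$), $f$ is continuous along $u\circ\gamma$, and $u\circ\gamma$ avoids the exceptional families for the weak upper and lower gradients of $f$ (by the modulus inequality above); using the chain rule $|(u\circ\gamma)'(t)|=\apmd u_{\gamma(t)}(\gamma'(t))$ valid for a.e.\ $\gamma$ and a.e.\ $t$, together with $l_u(z)|v|\le\apmd u_z(v)\le L_u(z)|v|$, this yields
\[
d_Y(h(\gamma(a)),h(\gamma(b)))\le\int_{u\circ\gamma}\rho_f^u\,ds\le\int_\gamma\phi\,ds,\qquad \ell(h\circ\gamma)=\ell(f\circ(u\circ\gamma))\ge\int_{u\circ\gamma}\rho_f^l\,ds\ge\int_\gamma\psi\,ds,
\]
the first for a.e.\ $\gamma$ and the second for a.e.\ $\gamma$ along which $h$ is continuous.

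To convert this into \ref{thm:unif-2} I would check $\phi\in L^2_{\mathrm{loc}}(U)$. By Lemma \ref{lem:MR23-2.9} and $\sqrt2$-quasiregularity, $L_u^2\le\sqrt2\,L_u l_u\le 2\sqrt2\,J(\apmd u_z)$, and since $(\rho_f^u\circ u)^2$ is constant on the fibers of $u$, the area formula gives, for $U'$ compactly contained in $U$,
\[
\int_{U'}\phi^2\,dz\le 2\sqrt2\int_{U'}(\rho_f^u\circ u)^2\,J(\apmd u_z)\,dz=2\sqrt2\int_X(\rho_f^u)^2\,N(x,u,U')\,d\hm^2\le 2\sqrt2\int_{u(\overline{U'})}(\rho_f^u)^2\,d\hm^2<\infty,
\]
using $N(\cdot,u,U')\le1$ a.e.\ and $\rho_f^u\in L^2_{\mathrm{loc}}(X)$. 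Thus $\phi$ is a weak upper gradient of $h$ in $L^2_{\mathrm{loc}}(U)$; combined with local boundedness of $h$ (immediate from continuity of $f$ on the compact set $u(\overline{U'})$, which covers the sense-preserving and $\operatorname{FDP}$ cases relevant here) this gives $h\in\Nloc(U,Y)$ and $\rho_h^u\le\phi$ a.e. Sense-preservation of $h$ when $f$ is sense-preserving follows from multiplicativity of the local topological degree under composition together with $\deg(\cdot,u,\cdot)\in\{0,1\}$ for the monotone map $u$. Finally, for \ref{thm:unif-3}, $\min\{\psi,\rho_h^u\}$ is a weak lower gradient of $h$ by the second inequality above, hence $\min\{\psi,\rho_h^u\}\le\rho_h^l\le\rho_h^u$ a.e.; on the set where $\psi\le\rho_h^u$ this gives $\psi\le\rho_h^l$ and so, wherever $\rho_h^l\neq0$, $K_h=\rho_h^u/\rho_h^l\le\phi/\psi\le\sqrt2\,(\rho_f^u/\rho_f^l)\circ u=\sqrt2\,K_f\circ u$, while on the complementary set $\rho_h^u=\rho_h^l$, so $K_h=1\le\sqrt2\,K_f\circ u$; since $K_f<\infty$ everywhere this is exactly \ref{thm:unif-3}.

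I expect the main obstacle to be the lower gradient inequality in the second step: lengths of image curves, not distances, are the only thing one can control with a metric target, so the argument must stay at the level of $\ell(h\circ\gamma)$ and one must carefully verify that the three bad curve families in $U$ — where $f$ is discontinuous along $u\circ\gamma$, where the chain rule for $\apmd u$ along $\gamma$ fails, and where $u\circ\gamma$ is non-rectifiable — all have zero modulus. This is precisely where the modulus inequality for $u$ and the multiplicity bound $N(\cdot,u,U')\le1$ a.e.\ coming from monotonicity are essential, and it is also the place where the integrability hypotheses on the gradients of $f$ and $u$ enter.
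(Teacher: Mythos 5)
Your strategy is essentially the paper's: take $u$ from \cite{NR22}, establish the chain-rule inequalities $\rho_h^u\le(\rho_f^u\circ u)\,\rho_u^u$ and $\rho_h^l\ge(\rho_f^l\circ u)\,\rho_u^l$ (the paper simply cites \cite{MR23}*{Lemma 2.10} for these), combine the area formula with a multiplicity bound for the $L^2$-estimate, and use multiplicativity of the degree for sense-preservation; your truncation $\min\{\psi,\rho_h^u\}$ in part \ref{thm:unif-3} is a sound way to respect the requirement $\rho^l\le\rho_h^u$ in the definition of a lower gradient. However, part \ref{thm:unif-1} is not "part of the statement provided by \cite{NR22}": Theorem 1.3 there produces a weakly $(4/\pi)$-quasiconformal $u$, and the constant $\sqrt2$ is the nontrivial point. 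One has to go into the proof of that theorem to arrange that John's ellipse of $\apmd u_z$ is a disc for a.e.\ $z$, and then John's theorem together with Lemma \ref{lem:MR23-2.9} gives $\rho_u^u\le\sqrt2\,\rho_u^l$. As written, (i) is asserted rather than proved. (Conversely, the modulus inequality $\mod\Gamma\le\tfrac{4}{\pi}\mod u(\Gamma)$ is part of the definition of weak quasiconformality and does not need to be re-derived from condition (N).)

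The second genuine gap is your derivation of $N(\cdot,u,U')\le 1$ a.e. The classical area formula only controls $N(x,u,A)$ for $A\subset U\setminus G_0^u$, whereas the observation that a non-degenerate fiber meets an open set in infinitely many points concerns the full fiber. On a metric surface $u(G_0^u)$ may have positive $\hm^2$-measure, so there could a priori be a positive-measure set of points whose fibers are non-degenerate continua lying almost entirely inside $G_0^u$; for these your finiteness argument says nothing, and the implication "at least two preimages outside $G_0^u$ implies infinitely many preimages outside $G_0^u$" is false. The paper instead quotes \cite{NR:21}*{Remark 7.2} for $N(x,u,U)=1$ a.e. With that citation and the John's ellipse argument for (i) in place, the remainder of your proposal goes through.
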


\begin{proof}
    The existence of a sense-preserving weakly $(4/\pi)$-quasiconformal para\-met\-ri\-za\-tion $u\in\Nloc(U,X)$ follows from \cite{NR22}*{Theorem~1.3}. We refrain from defining weak quasiconformality here, but note that such a map $u$ is continuous, surjective and monotone, and satisfies
    \begin{align}\label{ineq:wqc}
        \mod \Gamma\leq \frac{4}{\pi}\mod u(\Gamma)
    \end{align}
    for every family $\Gamma$ of curves in $X$. 

    If $\ap\md u_z$ is a norm, then the closed unit ball $B_z$ of $(\R^2,\ap\md u_z)$ contains a unique ellipse of maximal area $E_z$, called \emph{John's ellipse} of $\ap\md u_z$. The proof of \cite{NR22}*{Theorem~1.3} implies that we may assume $E_z$ to be a disc for almost every $z\in U$. By John's Theorem (see e.g.\ \cite{Bal97}*{Theorem 3.1}) and Lemma \ref{lem:MR23-2.9} we know that
    \begin{align*}
        \rho^u_u(z)\leq\sqrt{2}\cdot \rho^l_u(z)
    \end{align*}
    holds for almost every $z\in U$. Thus $u$ is $\sqrt{2}$-quasiregular, which proves \ref{thm:unif-1}. 
    
    It follows from \eqref{ineq:wqc} that $u$ maps curve families of positive modulus to curve families of positive modulus. Therefore, $$\rho=\rho_u^u\cdot(\rho_f^u\circ u)$$ is a weak upper gradient of $h$, see Lemma \ref{lem:MR23-2.9} and \cite{MR23}*{Lemma 2.10}. By \cite{NR:21}*{Remark 7.2}, we know that $N(x,u,U)=1$ for almost every $x\in u(U)$. Let $E\subset U$ be compact and $G_0^u\subset U$ the exceptional set in the classical area formula, Theorem \ref{thm:classical-area-formula}. Combining the formula with \ref{thm:unif-1} and Lemma \ref{lem:MR23-2.9}, we have 
    \begin{align*}
        \int_E\rho^2\,dz&=\int_{E\setminus G_0}(\rho_u^u)^2\cdot(\rho_f^u\circ u)^2\,dz\leq\sqrt{2}\int_{E\setminus G_0}\rho_u^u\rho_u^l\cdot(\rho_f^u\circ u)^2\,dz\\
        &\leq 2\sqrt{2}\int_X(\rho_f^u)^2\cdot N(x,u,E)\,d\hm^2_X=2\sqrt{2}\int_{u(E)}(\rho_f^u)^2\,d\hm^2_X<\infty.
    \end{align*}
    In particular, $h\in\Nloc(X,Y)$. The second claim in \ref{thm:unif-2} follows from the basic properties of topological degree. 
    
    Finally by \cite{MR23}*{Lemma 2.10} and Lemma \ref{lem:MR23-2.9} we have 
    \begin{equation}
    \label{hanni}
    \rho^l_{h}(z)\geq \rho_f^l(u(z))\cdot \rho^l_u(z) 
    \quad \text{and} \quad 
    \rho^u_{h}(z)\leq \rho_f^u(u(z))\cdot \rho^u_u(z) 
    \end{equation}
    for almost every $z \in U$. Combining \eqref{hanni} with \ref{thm:unif-1} and \ref{thm:unif-2} gives \ref{thm:unif-3}.
\end{proof}

\subsection{Area inequality}
Another important ingredient in the proof of Theorem \ref{thm:main} is the following area inequality for Sobolev maps on metric surfaces.  

Let $X,Y$ be metric surfaces and $u\in\Nloc(U,X)$ a weakly quasiconformal parametrization as in Theorem \ref{thm:unif}. Given $f\in\Nloc(X,Y)$, let $G_0:=G_0^u\cup G_0^h$, where $G_0^u\subset U$ and $G_0^h\subset U$ are the exceptional sets in the classical area formula (Theorem \ref{thm:classical-area-formula}) associated with $u$ and $h=f \circ u$, respectively. We denote 
\begin{equation} \label{exceptionals}
u(G_0)=:X_0 \quad \text{and} \quad X \setminus X_0=:X'. 
\end{equation}

\begin{thm}[Area inequality, \cite{MR23}*{Theorem 3.1}]\label{thm:area-ineq}
    Let $f\in\Nloc(X,Y)$. If $g\colon Y\to [0,\infty]$ and $F\subset X'$ are Borel measurable, then
    \begin{align*}
        \int_F g(f(x))\cdot\rho_f^u(x)\rho_f^l(x)\,d\hm^2_X\leq 4\sqrt{2}\int_{Y}g(y)\cdot N(y,f,F)\,d\hm_Y^2.
    \end{align*}
    If $f$ additionally satisfies Lusin's condition (N), then
    \begin{align*}
        \int_F g(f(x))\cdot\rho_f^u(x)\rho_f^l(x)\,d\hm^2_X\geq \frac{1}{4\sqrt{2}}\int_{Y}g(y)\cdot N(y,f,F)\,d\hm_Y^2.
     \end{align*}
\end{thm}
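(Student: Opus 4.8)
The plan is to transfer the statement from the metric surface $X$ to the euclidean parameter domain $U$ via the weakly quasiconformal parametrization $u\colon U\to X$ of Theorem~\ref{thm:unif}, prove the corresponding area comparison there using the classical area formula (Theorem~\ref{thm:classical-area-formula}) together with Lemma~\ref{lem:MR23-2.9}, and then push the inequalities back up to $X$. Concretely, I would set $h=f\circ u$, recall from Theorem~\ref{thm:unif}\ref{thm:unif-2} that $h\in\Nloc(U,Y)$, and work on $U\setminus G_0$ where both $u$ and $h$ are approximately metrically differentiable and the area formulas for $u$ and $h$ hold without exceptional contributions. The key pointwise identity is the chain rule for approximate metric derivatives, $\apmd h_z = \apmd f_{u(z)}\circ\apmd u_z$ at a.e.\ $z$, which gives $J(\apmd h_z)=J_f(u(z))\cdot J(\apmd u_z)$ after identifying $J_f\circ u$ with the right Jacobian (using $N(x,u,U)=1$ a.e.).

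Next I would estimate $J(\apmd h_z)$ from above and below in terms of $\rho^u_f(u(z))\,\rho^l_f(u(z))\cdot J(\apmd u_z)$. The two-sided bound $2^{-1}L_h l_h \le J(\apmd h_z)\le 2 L_h l_h$ from Lemma~\ref{lem:MR23-2.9}, combined with the factorization of the seminorm and the analogous bounds for $u$, together with the fact (from the proof of Theorem~\ref{thm:unif}) that John's ellipse of $\apmd u_z$ is a disc so that $L_u(z)\le\sqrt2\, l_u(z)$, yields $J(\apmd u_z)\ge \tfrac{1}{\sqrt2}L_u(z)l_u(z)\ge \tfrac{1}{2}L_u(z)^2$ and $J(\apmd u_z)\le L_u(z)l_u(z)\le L_u(z)^2$. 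Multiplying the pointwise estimates and using $L_h\le L_f(u(z))L_u(z)$, $l_h\ge l_f(u(z))l_u(z)$ (from \eqref{hanni}) produces, after collecting the constants, the two inequalities
\begin{align*}
\tfrac{1}{4\sqrt2}\,\rho^u_f(u(z))\rho^l_f(u(z))\cdot J(\apmd u_z) \le J(\apmd h_z)\le 4\sqrt2\,\rho^u_f(u(z))\rho^l_f(u(z))\cdot J(\apmd u_z)
\end{align*}
for a.e.\ $z\in U\setminus G_0$, where I identify $\rho^u_f,\rho^l_f$ with their representatives $L_f,l_f$ from Lemma~\ref{lem:MR23-2.9}.

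Now I would integrate. For the upper bound in the statement: given Borel $g\colon Y\to[0,\infty]$ and Borel $F\subset X'$, set $\tilde F = u^{-1}(F)\setminus G_0$. Applying the area formula (Theorem~\ref{thm:classical-area-formula}) to $u$ on $\tilde F$ converts $\int_F g(f(x))\rho^u_f\rho^l_f\,d\hm^2_X$ (using $N(x,u,U)=1$ a.e.\ to pass from $X$ to $U$) into $\int_{\tilde F} g(h(z))\,\rho^u_f(u(z))\rho^l_f(u(z))\,J(\apmd u_z)\,dz$; the pointwise bound replaces this, up to the factor $4\sqrt2$, by $\int_{\tilde F} g(h(z))\,J(\apmd h_z)\,dz$, and the area formula for $h$ turns that into $\int_Y g(y)\,N(y,h,\tilde F)\,d\hm^2_Y$. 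Since $u$ is monotone and essentially injective, $N(y,h,\tilde F)=N(y,f,F)$ up to a $\hm^2_Y$-null set, giving the first inequality. The second inequality is symmetric, using the lower pointwise bound; here Lusin's condition (N) for $f$ is what guarantees that the image of the discarded null sets (inside $X_0$ or where $u$ is not injective) carries no $\hm^2_Y$-mass, so that restricting to $X'$ does not lose any of the multiplicity on the $Y$ side.

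The main obstacle I anticipate is the bookkeeping around exceptional sets and multiplicity: ensuring that passing through $u$ does not distort the preimage count $N(y,f,F)$, i.e.\ controlling the set where $u$ fails to be injective or fails Lusin (N), and checking that $h$ inherits Lusin (N) from $f$ and $u$ so that both directions of the area formula for $h$ are available. The pointwise chain-rule/Jacobian estimate is routine linear algebra once John's ellipse is a disc; the integration is a direct double application of Theorem~\ref{thm:classical-area-formula}. So the delicate part is purely measure-theoretic: justifying the identifications $\int_X\cdots = \int_{U}\cdots$ and $N(y,h,\cdot)=N(y,f,\cdot)$ modulo null sets, which is where the hypotheses (sense-preserving, hence continuous with Lusin (N) by Proposition~\ref{prop:sense-pres}; $N(x,u,U)=1$ a.e.; monotonicity of $u$) all get used.
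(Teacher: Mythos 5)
This theorem is not proved in the present paper at all: it is imported verbatim from \cite{MR23}*{Theorem 3.1}, so there is no in-text argument to compare against. Your overall architecture --- factor through the weakly quasiconformal parametrization $u$, apply the classical area formula once for $u$ and once for $h=f\circ u$ on $U\setminus G_0$, and connect the two integrands by a pointwise Jacobian comparison whose constant $4\sqrt2=2\cdot 2\cdot\sqrt2$ collects the two factors of $2$ from Lemma~\ref{lem:MR23-2.9} and the $\sqrt2$ from the John's-ellipse normalization of $u$ --- is clearly the intended one; indeed the very definition of $X'$ via the exceptional sets $G_0^u\cup G_0^h$ only makes sense for this strategy.

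There is, however, a genuine gap at the central pointwise step. The ``chain rule'' $\apmd h_z=\apmd f_{u(z)}\circ\apmd u_z$ is not meaningful here: approximate metric differentiability is defined only for maps \emph{from} a Euclidean domain, so $f\colon X\to Y$ has no approximate metric derivative and no stretches $L_f$, $l_f$; Lemma~\ref{lem:MR23-2.9} identifies $\rho_f^u,\rho_f^l$ with $L_f,l_f$ only when the source is planar. All you actually have for the composition are the one-sided inequalities \eqref{hanni}, namely $L_h\leq(\rho^u_f\circ u)\,L_u$ and $l_h\geq(\rho^l_f\circ u)\,l_u$, and these do not suffice for \emph{either} direction of your claimed two-sided estimate. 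Concretely, to get $(\rho^u_f\circ u)(\rho^l_f\circ u)\,J(\apmd u_z)\lesssim J(\apmd h_z)$ you must bound $L_h$ from \emph{below} by $c\,(\rho^u_f\circ u)\,l_u$, and to get the reverse inequality you must bound $l_h$ from \emph{above} by $C\,(\rho^l_f\circ u)\,L_u$; both are the opposite of what \eqref{hanni} provides. Establishing these reverse composition inequalities --- that the upper and lower gradients of $f$ are actually realized along the images of curves through the possibly non-injective, possibly collapsing map $u$ --- is the real analytic content of \cite{MR23}*{Theorem 3.1} and cannot be dispatched as ``routine linear algebra once John's ellipse is a disc.'' The remaining bookkeeping you flag (that $N(y,h,\tilde F)$ and $N(y,f,F)$ agree up to $\hm^2_Y$-null sets, with Lusin's condition (N) needed only for the lower bound) is indeed delicate but is at least correctly identified as an issue; the pointwise differential inequality is asserted rather than proved.
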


\subsection{Covering theorems}
We recall the basic $5r$-covering lemma. For a proof see e.g. \cite{Hei01}*{Theorem 1.2}.

\begin{lemma}[$5r$-covering lemma]\label{lem:5r}
    Every family $\mathcal{F}$ of balls in $X$ of uniformly bounded diameter contains a subfamily $\mathcal{G}$ such that every two distinct balls in $\mathcal{G}$ are disjoint and 
    $$\bigcup_{B\in\mathcal{F}}B\subset\bigcup_{B\in\mathcal{G}}5B.$$
\end{lemma}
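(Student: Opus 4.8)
The final statement to prove is the $5r$-covering lemma, Lemma~\ref{lem:5r}.

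\medskip

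\textbf{Proof plan.} The plan is to run the standard greedy/Zorn's lemma argument on disjoint subfamilies, ordered by inclusion, selecting balls of near-maximal radius at each stage. First I would set $R:=\sup\{\diam B : B\in\mathcal F\}<\infty$ by hypothesis, and partition $\mathcal F$ into the subfamilies $\mathcal F_n:=\{B\in\mathcal F : R/2^{n}<\diam B\le R/2^{n-1}\}$ for $n\ge 1$. Then I would construct $\mathcal G_n\subset\mathcal F_n$ inductively: let $\mathcal G_1$ be a maximal (with respect to inclusion) pairwise disjoint subfamily of $\mathcal F_1$, which exists by Zorn's lemma since the union of a chain of pairwise disjoint subfamilies is again pairwise disjoint; and having chosen $\mathcal G_1,\dots,\mathcal G_{n-1}$, let $\mathcal G_n$ be a maximal pairwise disjoint subfamily of those $B\in\mathcal F_n$ that are disjoint from every ball in $\mathcal G_1\cup\dots\cup\mathcal G_{n-1}$. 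Finally set $\mathcal G:=\bigcup_{n\ge 1}\mathcal G_n$; by construction any two distinct balls in $\mathcal G$ are disjoint.

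\medskip

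The remaining point is the covering inclusion. Fix $B=B(x,r)\in\mathcal F$; then $B\in\mathcal F_n$ for some $n$, so $R/2^{n}<\diam B\le R/2^{n-1}$, hence $2r\ge\diam B>R/2^n$. By maximality of $\mathcal G_n$, the ball $B$ must intersect some ball $B'=B(x',r')$ in $\mathcal G_1\cup\dots\cup\mathcal G_n$ (otherwise $B$ could be added, contradicting maximality; note that if $B$ itself already lies in some earlier $\mathcal G_k$ we may take $B'=B$). Pick a point $z\in B\cap B'$. Every such $B'$ lies in some $\mathcal F_k$ with $k\le n$, so $\diam B'>R/2^k\ge R/2^n\ge\tfrac12\diam B$, i.e. $r'\ge r/2$ (using $\diam\le 2\cdot\text{radius}$ and $\text{radius}\le\diam$ in a metric space, one gets $r'> R/2^{k+1}\ge R/2^{n+1}$ and $r\le R/2^{n-1}$, so $r\le 4r'$; the exact constant is what matters). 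Then for any $y\in B$,
\[
d(y,x')\le d(y,x)+d(x,z)+d(z,x')< r+r+r' \le 4r'+r' = 5r',
\]
so $y\in 5B'$. Hence $B\subset 5B'\subset\bigcup_{B'\in\mathcal G}5B'$, which gives $\bigcup_{B\in\mathcal F}B\subset\bigcup_{B\in\mathcal G}5B$.

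\medskip

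\textbf{Main obstacle.} The only subtle points are bookkeeping rather than genuine difficulty: one must be careful that in a general metric space the radius of a ball is not determined by the ball as a set, so the argument should be phrased in terms of the chosen center–radius data for each ball in $\mathcal F$, and the comparison $r\le 4r'$ (rather than a cleaner $r\le 2r'$) must be tracked through the dyadic scales to land the factor $5$; also one needs Zorn's lemma (or a transfinite greedy selection) to produce each maximal disjoint subfamily $\mathcal G_n$, since $\mathcal F$ need not be countable. No metric-measure structure of $X$ is used beyond the triangle inequality.
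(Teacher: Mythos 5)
The paper offers no proof of this lemma; it simply cites \cite{Hei01}*{Theorem 1.2}. Your overall strategy -- dyadic decomposition by size, Zorn's lemma to extract a maximal pairwise disjoint subfamily at each scale, then the triangle inequality -- is exactly the standard one. However, the final estimate as written does not close. You partition by \emph{diameter} but then must compare \emph{radii}, and to bridge the two you invoke ``$\text{radius}\le\diam$ in a metric space'', which is false: $B(x,100)$ in a space of diameter $1$ has radius $100$ and diameter at most $1$. Hence the step $r\le R/2^{n-1}$, and with it $r\le 4r'$, is unjustified (likewise the inference from $\diam B'\ge\tfrac12\diam B$ to $r'\ge r/2$). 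Moreover, even granting $r\le 4r'$, your displayed chain $d(y,x')<r+r+r'\le 4r'+r'=5r'$ secretly uses $2r\le 4r'$, i.e.\ $r\le 2r'$; with only $r\le 4r'$ you would land at the constant $9$, not $5$. A further small omission: balls with $\diam B=0$ (singletons) belong to no $\mathcal F_n$ and are never covered by the argument.

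Both defects are repairable inside your framework. Either (a) partition by the radius data: first replace each radius $r$ by $\min(r,D+1)$, where $D$ is the uniform diameter bound (this does not change the ball as a set, since every $y\in B(x,r)$ satisfies $d(x,y)\le\diam B\le D$, and it only shrinks $5B$), so that $R:=\sup_B r<\infty$; then $B\in\mathcal F_n$ and $B'\in\mathcal F_k$ with $k\le n$ give $r\le R/2^{n-1}<2r'$, and your chain $d(y,x')<r+r+r'<5r'$ is valid. Or (b) keep the diameter partition but do not route the estimate through the center of $B$: for $y\in B$ and $z\in B\cap B'$, bound $d(y,x')\le d(y,z)+d(z,x')\le\diam B+r'\le R/2^{n-1}+r'<4r'+r'=5r'$, using only $2r'\ge\diam B'>R/2^k\ge R/2^n$; singleton balls must then be handled separately (any such $\{x\}$ either already meets a chosen ball or can be adjoined to $\mathcal G$). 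With either correction the proof is complete and matches the cited reference.
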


For a Borel function $g\colon X\to\R$, we define the maximal function 
    $$\mathcal{M}g(x)=\sup_{r>0}\frac{1}{\hm^2(B(x,5r))}\int_{B(x,r)}g\,d\hm^2.$$ 
The proof of the following lemma is a standard application of the $5r$-covering theorem, see e.g.\ \cite{Hei01}*{Theorem 2.2}

\begin{lemma}\label{lem:max-fct}
    If $g\in\Lloc^1(X)$ and $A\subset X$ with $\hm^2(A)>0$, then there are $E'\subset A$ with $\hm^2(A\setminus E')>0$ and $L\geq1$ such that $$\mathcal{M}g(x)\leq L\quad\text{for every }x\in A\setminus E'.$$
\end{lemma}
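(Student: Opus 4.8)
\textbf{Proof proposal for Lemma \ref{lem:max-fct}.}

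The plan is to combine the $5r$-covering lemma with a Chebyshev-type argument to show that the maximal function $\mathcal{M}g$ cannot be large on all of $A$. First I would observe that, since the statement is local, we may replace $g$ by $g\cdot\chi_{B}$ for a suitable ball $B$ and therefore assume $g\in L^1(X)$ with $\|g\|_{L^1(X)}<\infty$; without loss of generality $g\geq 0$. Set, for $\lambda>0$,
\begin{equation*}
A_\lambda=\{x\in A:\mathcal{M}g(x)>\lambda\}.
\end{equation*}
The heart of the argument is the weak-type estimate $\hm^2(A_\lambda)\leq C\lambda^{-1}\|g\|_{L^1(X)}$ for an absolute constant $C$. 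To prove it, for each $x\in A_\lambda$ choose a radius $r_x>0$ with
\begin{equation*}
\int_{B(x,r_x)}g\,d\hm^2>\lambda\,\hm^2(B(x,5r_x)).
\end{equation*}
In particular $\hm^2(B(x,5r_x))<\lambda^{-1}\|g\|_{L^1(X)}<\infty$, so the radii $r_x$ are uniformly bounded (here one uses that $X$, being a metric surface, has the property that balls of large radius have large measure, or more simply one first restricts attention to a bounded piece of $X$ so that all competing balls lie in a fixed bounded set). Apply Lemma \ref{lem:5r} to the family $\{B(x,r_x):x\in A_\lambda\}$ to extract a pairwise disjoint subfamily $\{B(x_i,r_{x_i})\}_i$ with $A_\lambda\subset\bigcup_i B(x_i,5r_{x_i})$. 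Then
\begin{equation*}
\hm^2(A_\lambda)\leq\sum_i\hm^2(B(x_i,5r_{x_i}))\leq\frac{1}{\lambda}\sum_i\int_{B(x_i,r_{x_i})}g\,d\hm^2\leq\frac{1}{\lambda}\|g\|_{L^1(X)},
\end{equation*}
using disjointness of the $B(x_i,r_{x_i})$ in the last step.

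Once the weak-type bound is in hand, the lemma follows immediately: since $\hm^2(A)>0$, choose $L\geq 1$ so large that $L^{-1}\|g\|_{L^1(X)}<\hm^2(A)$, and put $E'=A_L=\{x\in A:\mathcal{M}g(x)>L\}$. Then $\hm^2(E')\leq L^{-1}\|g\|_{L^1(X)}<\hm^2(A)$, so $\hm^2(A\setminus E')>0$, and by construction $\mathcal{M}g(x)\leq L$ for every $x\in A\setminus E'$, as desired.

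The only genuine subtlety is the uniform boundedness of the radii $r_x$ needed to invoke Lemma \ref{lem:5r}, since an arbitrary metric surface need not be doubling or Ahlfors regular. This is handled at the outset by the localization step: restrict to a relatively compact open set $\Omega\supset A'$ for a subset $A'\subset A$ of positive measure, replace $g$ by $g\chi_\Omega$, and only consider balls used in the definition of $\mathcal{M}$ that are centered in $A'$; those with radius exceeding $\diam\Omega$ contribute an average over a region whose measure is bounded below by a constant depending only on $\Omega$, hence cannot exceed a fixed $L_0$, and the remaining radii are bounded by $\diam\Omega$. After this reduction the covering argument above goes through verbatim. Everything else is routine measure theory.
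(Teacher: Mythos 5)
Your core argument --- the weak-type $(1,1)$ estimate obtained from the $5r$-covering lemma (Lemma \ref{lem:5r}), using the fact that the denominator in $\mathcal{M}g$ is $\hm^2(B(x,5r))$ precisely so that the disjoint subfamily controls the measure of the dilated balls without any doubling hypothesis, followed by a Chebyshev choice of $L$ --- is exactly the standard proof the paper has in mind (it only cites Heinonen's Theorem~2.2 and calls the argument standard). That part is correct.

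The gap is in the localization, which you correctly identify as the only delicate point but do not resolve. Replacing $g$ by $g\chi_\Omega$ changes the function whose maximal function you are bounding: $\mathcal{M}(g\chi_\Omega)\leq\mathcal{M}g$, so a bound on the former says nothing about the latter. Your claim that radii exceeding $\diam\Omega$ ``cannot exceed a fixed $L_0$'' is false for the original $g$: the denominator $\hm^2(B(x,5r))$ is indeed bounded below once $B(x,r)\supset\Omega$, but the numerator $\int_{B(x,r)}g\,d\hm^2$ is not controlled by $\|g\chi_\Omega\|_{L^1}$ and need not even be finite, since $g$ is only locally integrable (integrable on \emph{compact} sets) and $B(x,r)$ need not be relatively compact even when bounded. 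Indeed, taken literally with $\sup_{r>0}$, the statement fails: for $X=\R^2$ and $g(y)=e^{|y|^2}\in\Lloc^1(\R^2)$ one has $\mathcal{M}g\equiv\infty$; similarly for $X=\D$ and $g(y)=(1-|y|)^{-10}$ with $r=1$. The lemma must be read (and proved) with the supremum restricted to radii $r$ for which $\overline{B(x,5r)}$ lies in a fixed compact neighborhood of a positive-measure subset of $A$ --- which is all that is used later, since in the proof of Lemma \ref{lem:E_M} the bound is only invoked for $5\cdot 2^jR\leq 5MR<\min\{r_x,\delta_x\}$. With that truncation your reduction to $g\in L^1$ is legitimate and the covering argument goes through verbatim; without it, the reduction is the missing step, not a routine one.
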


We will also apply the Vitali covering theorem for Hausdorff measures, see e.g. \cite{AmbTil04}*{Theorem 2.2.2}. 

\begin{thm} \label{thm:Vit}
Let $G \subset X$, and let $\mathcal{F}$ be a \emph{fine covering} of $G$ by closed sets. Then there exists a countable disjoint subfamily $\{V_j\} \subset \mathcal{F}$ such that one of the following holds: 
\begin{itemize}
\item[(i)] $\sum \diam(V_j)^2= \infty$. 
\item[(ii)] $\hm^2(G \setminus \cup_j V_j)=0$.  
\end{itemize}
\end{thm}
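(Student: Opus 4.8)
The plan is to run the classical greedy Vitali selection and then control the leftover set by a convergent tail sum. Throughout I use that $\mathcal{F}$ being a \emph{fine covering} of $G$ means: for every $x\in G$ and every $\varepsilon>0$ there is $V\in\mathcal{F}$ with $x\in V$ and $0<\diam V<\varepsilon$; after discarding degenerate members, I may assume every $V\in\mathcal{F}$ has positive diameter.

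First I would reduce to the case $R:=\sup\{\diam V:V\in\mathcal{F}\}<\infty$. If some $V\in\mathcal{F}$ has infinite diameter, then $\{V\}$ already satisfies (i). If $R=\infty$ while all diameters are finite, I would select disjoint sets greedily by size: pick $V_1$ with $\diam V_1\geq1$, and having chosen disjoint $V_1,\dots,V_n$ put $\mathcal{F}_{n+1}:=\{V\in\mathcal{F}:V\cap(V_1\cup\dots\cup V_n)=\emptyset\}$, which is again a fine covering of $G\setminus(V_1\cup\dots\cup V_n)$ because the $V_j$ are closed. If $\sup\{\diam V:V\in\mathcal{F}_{n+1}\}=\infty$, choose $V_{n+1}\in\mathcal{F}_{n+1}$ with $\diam V_{n+1}\geq n+1$; if this never terminates then $\sum\diam(V_j)^2=\infty$ and (i) holds, while if it terminates at some stage $k$ we are left with a fine covering of $G_k:=G\setminus(V_1\cup\dots\cup V_k)$ by closed sets of diameter $\leq R_k<\infty$, and it suffices to settle the bounded case for $G_k$ and adjoin $V_1,\dots,V_k$ (disjointness and the dichotomy are preserved).

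In the bounded case I would run the Morse selection: set $\mathcal{F}_1:=\mathcal{F}$; having chosen disjoint $V_1,\dots,V_n$, let $\mathcal{F}_{n+1}$ be the members of $\mathcal{F}$ disjoint from $V_1\cup\dots\cup V_n$, stop if $\mathcal{F}_{n+1}=\emptyset$, and otherwise set $d_{n+1}:=\sup\{\diam V:V\in\mathcal{F}_{n+1}\}>0$ and pick $V_{n+1}\in\mathcal{F}_{n+1}$ with $\diam V_{n+1}>d_{n+1}/2$. If the process stops at stage $n$, fineness forces $G\subset V_1\cup\dots\cup V_n$ (each $x\in G$ lies in arbitrarily small members of $\mathcal{F}$, each meeting the union, so $x$ lies in the closed union), giving (ii). Otherwise I obtain an infinite disjoint sequence $\{V_j\}$; if $\sum\diam(V_j)^2=\infty$ this is case (i), so I assume the series converges, whence $\diam V_j\to0$ and $d_n<2\diam V_n\to0$.

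The heart of the argument — and the step I expect to be the only real obstacle, essentially a matter of organizing the quantifiers correctly — is to show $\hm^2(G\setminus\bigcup_j V_j)=0$. Fix $x\in G\setminus\bigcup_j V_j$ and take $V\in\mathcal{F}$ with $x\in V$ and $\diam V$ small. Then $V$ meets some $V_j$, since otherwise $V\in\mathcal{F}_{n+1}$ for all $n$, forcing $0<\diam V\leq d_{n+1}<2\diam V_{n+1}\to0$. Letting $j=j(V)$ be the least index with $V\cap V_j\neq\emptyset$, we get $V\in\mathcal{F}_j$, so $\diam V\leq d_j<2\diam V_j$, and picking $p\in V\cap V_j$ gives $\dist(x,V_j)\leq\diam V<2\diam V_j$, i.e.\ $x\in W_j:=N_{2\diam V_j}(V_j)$, a set with $\diam W_j\leq5\diam V_j$. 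Moreover $j(V)\to\infty$ as $\diam V\to0$, for if $j(V)=j_0$ along a sequence with $\diam V\to0$ then $\dist(x,V_{j_0})=0$, hence $x\in V_{j_0}$, against the choice of $x$. Consequently $G\setminus\bigcup_j V_j\subset\bigcup_{j\geq N}W_j$ for every $N$. Given $\delta>0$, choose $N$ with $\diam W_j\leq\delta$ for all $j\geq N$ (possible since $\diam W_j\to0$); the definition of the $\hm^2$-content at scale $\delta$ then gives $\hm^2_\delta(G\setminus\bigcup_j V_j)\leq c\sum_{j\geq N}\diam(W_j)^2\leq25c\sum_{j\geq N}\diam(V_j)^2$ for a dimensional constant $c$. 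Letting $N\to\infty$ the right-hand side vanishes, so $\hm^2_\delta(G\setminus\bigcup_j V_j)=0$ for all $\delta>0$, hence $\hm^2(G\setminus\bigcup_j V_j)=0$, which is (ii).
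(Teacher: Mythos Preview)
Your argument is correct and is essentially the classical Vitali--Morse selection proof. Note, however, that the paper does not actually supply a proof of this statement: Theorem~\ref{thm:Vit} is quoted without proof, with a reference to \cite{AmbTil04}*{Theorem 2.2.2}. What you have written is, up to cosmetic differences, the standard proof one finds in that reference (greedy selection by near-maximal diameter, then covering the residual set by the $5$-times enlargements $W_j$ and using the convergent tail $\sum_{j\geq N}\diam(V_j)^2$ to kill $\hm^2_\delta$), so there is nothing to compare beyond observing that you have filled in a proof the paper chose to outsource.
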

Here a covering $\mathcal{F}$ of $G$ by closed sets is \emph{fine} if for every $x \in G$ and every $\varepsilon>0$ there exists $V \in \mathcal{F}$ such that $x \in V$ and $\diam(V) < \varepsilon$. 

Lemma \ref{lem:5r} and Theorem \ref{thm:Vit} hold in arbitrary metric spaces, and the latter holds with exponent $2$ replaced by any $\alpha \geq 0$.  

\subsection{Regularity of the Hausdorff measure }
Let $X$ be a metric surface and $u\in\Nloc(U,X)$ a weakly quasiconformal parametrization as in Theorem \ref{thm:unif}. Moreover, let $G_0$, $X_0$ and $X'$ be as in \eqref{exceptionals}. As described in the paragraph preceding Theorem \ref{thm:classical-area-formula}, $U\setminus G_0$ may be covered with pairwise disjoint Borel sets $G_j^u\subset U$, $j=1,2,...$, so that $u|_{G_j^u}$ is $j$-Lipschitz. 
In particular, $X'=u(U\setminus G_0)$ is countably $2$-rectifiable. The following density result follows by combining \cite{Fed69}*{Theorem 2.10.19(5)} and \cite{Kir94}*{Theorem 9}. 

\begin{thm}\label{thm:regular}
    There exists $E\subset X$, $\hm^2(E)=0$, so that
    \begin{eqnarray*} 
& & \limsup_{r \to 0}\frac{\hm^2(\overline{B}(x,r))}{\pi r^2}\leq 1 \quad \text{for every } x \in X \setminus E, \quad \text{and} \\
& & \lim_{r \to 0}\frac{\hm^2(\overline{B}(x,r)\cap X')}{\pi r^2} = 1 \quad \text{for every } x \in X' \setminus E. 
    \end{eqnarray*}
\end{thm}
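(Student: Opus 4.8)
\textbf{Proof proposal for Theorem \ref{thm:regular}.}

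The plan is to transfer the classical Euclidean density statements through the weakly quasiconformal parametrization $u\colon U\to X$. Recall $X'=u(U\setminus G_0)$ is a countable union of Lipschitz images $u(G_j^u)$ with $u|_{G_j^u}$ being $j$-Lipschitz, so $X'$ is countably $2$-rectifiable. For the second assertion, I would first observe that by the classical area formula (Theorem \ref{thm:classical-area-formula}) applied to $u$, together with $N(x,u,U)=1$ for a.e.\ $x\in u(U)$ (from \cite{NR:21}*{Remark 7.2}), the measure $\hm^2$ restricted to $X'$ is precisely the pushforward of $J(\ap\md u_z)\,dz$ under $u$. Since $X'$ is countably $2$-rectifiable and $\hm^2\llcorner X'$ is a Radon measure (locally finite by hypothesis on metric surfaces), Kirchheim's rectifiability theorem \cite{Kir94}*{Theorem 9} gives that $\hm^2\llcorner X'$ has density $1$ at $\hm^2$-a.e.\ point of $X'$ with respect to the ambient Hausdorff measure, i.e.\ $\lim_{r\to 0}\hm^2(\overline B(x,r)\cap X')/(\pi r^2)=1$ for $\hm^2$-a.e.\ $x\in X'$. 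Collect the exceptional null set into $E$.

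For the first assertion, the upper bound $\limsup_{r\to0}\hm^2(\overline B(x,r))/(\pi r^2)\le 1$ for $x\in X\setminus E$, I would invoke Federer's density theorem for Hausdorff measure \cite{Fed69}*{Theorem 2.10.19(5)}: for any metric space with locally finite $\hm^2$, the upper $2$-density $\limsup_{r\to0}\hm^2(\overline B(x,r))/(\omega_2 r^2)$ is at most $1$ for $\hm^2$-a.e.\ $x$, where $\omega_2=\pi$ with our normalization. This is a purely measure-theoretic fact requiring no structure on $X$ beyond local finiteness of the measure, so it holds off a null set which I again absorb into $E$. Taking the union of the two exceptional sets (one for each assertion, plus the set $X_0=u(G_0)$ where $X'$ fails to cover, which has $\hm^2$-measure zero by Lusin (N) of $u$ — note $u$ is sense-preserving, hence satisfies condition (N) by Proposition \ref{prop:sense-pres}, so $\hm^2(X_0)=\hm^2(u(G_0))=0$ since $|G_0|_2=0$) yields a single $E$ with $\hm^2(E)=0$ that works for both statements.

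The main obstacle is verifying that the hypotheses of Kirchheim's theorem are genuinely met, in particular that $\hm^2\llcorner X'$ is exactly the right measure to which rectifiability theory applies: one needs that $X'$, with the restricted metric, is covered up to $\hm^2$-null sets by bi-Lipschitz (not merely Lipschitz) pieces, or alternatively that $\hm^2$ and the pushforward of Lebesgue measure agree on $X'$. The point $N(x,u,U)=1$ a.e.\ combined with the area formula is what resolves this: it shows $u$ restricted to a suitable full-measure subset of $U\setminus G_0$ is injective, and on each Lipschitz piece where additionally $\ap\md u_z$ is a norm (a.e., by Lemma \ref{lem:MR23-2.9} and quasiregularity $L_u\le\sqrt2\, l_u$, so $J(\ap\md u_z)>0$ a.e.), $u$ is bi-Lipschitz onto its image after a further decomposition. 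I should be careful that the density-$1$ conclusion of Kirchheim is stated relative to the ambient metric of $X$, which is exactly what the statement requires; this is the content of \cite{Kir94}*{Theorem 9} for rectifiable sets, and no extra argument is needed beyond citing it correctly.
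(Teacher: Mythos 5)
Your proposal is correct and is essentially the paper's own argument: the paper proves Theorem \ref{thm:regular} precisely by combining Federer's density bound \cite{Fed69}*{Theorem 2.10.19(5)} for the first assertion with Kirchheim's density theorem \cite{Kir94}*{Theorem 9} applied to the countably $2$-rectifiable set $X'=u(U\setminus G_0)$ for the second. Your closing worry is unfounded — Kirchheim's theorem applies to any countably $\hm^2$-rectifiable subset of a metric space (Lipschitz images suffice; his own Lemma 4 supplies the bi-Lipschitz decomposition), so the detour through $N(x,u,U)=1$ and the area formula is unnecessary here.
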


\section{Differentiation of Hausdorff measures}
Metric surfaces do not need to be doubling or even Vitali spaces, so standard results on differentiation of measures do not hold automatically. In this section we prove such results for sense-preserving Sobolev maps. 

Let $X$ be a metric surface. We fix a weakly quasiconformal parametrization $u:U \to X$ as above. 
Given $f\in\Nloc(X,\R^2)$, we denote $h=f\circ u$ and let $G_0$ and $X_0=u(G_0)$ be the exceptional sets in \eqref{exceptionals}. Recall notations $X'=X \setminus X_0$ and  
$$
J_f(x)=\limsup_{r \to 0} \frac{|f(\overline{B}(x,r))|_2}{\pi r^2}. 
$$
\begin{lemma} \label{lem:J_f=0}
    If $f \in \Nloc(X,\R^2)$ is sense-preserving, then $$J_f(x)=0\quad\text{for almost every }x\in X_0.$$
\end{lemma}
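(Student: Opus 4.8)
The plan is to pull everything back to the domain $U$ via the weakly quasiconformal parametrization $u$ and exploit the classical area formula for $h=f\circ u$. The set $X_0=u(G_0)$ where $G_0=G_0^u\cup G_0^h$ has the property that $|G_0|_2=0$, so by Theorem \ref{thm:classical-area-formula} applied to $h$ (with $A=G_0^h$) we get $\int_{G_0}J(\apmd h_z)\,dz=0$, hence $J(\apmd h_z)=0$ for a.e.\ $z\in G_0$; similarly $J(\apmd u_z)=0$ for a.e.\ $z\in G_0$. By Lemma \ref{lem:MR23-2.9} this means $l_h(z)=0$ (equivalently the seminorm $\apmd h_z$ is degenerate) for a.e.\ $z\in G_0$, and likewise for $u$. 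The first step is therefore to record that $u$, $h$ both have degenerate approximate metric derivative a.e.\ on $G_0$.

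Next I would estimate $|f(\overline B(x,r))|_2$ from above for $x\in X_0$ by covering the preimage. Since $u$ is surjective and continuous, for $x=u(z_0)$ with $z_0\in G_0$ we have $f(\overline B(x,r))=h(u^{-1}(\overline B(x,r)))\subset h(K_r)$ for any compact $K_r\subset U$ containing $u^{-1}(\overline B(x,r))$; the issue is that $u^{-1}(\overline B(x,r))$ may be large. The cleaner route is to work directly in $X$: by Theorem \ref{thm:regular} (density of $X'$), for a.e.\ $x\in X'$ one has $\lim_{r\to0}\hm^2(\overline B(x,r)\cap X')/(\pi r^2)=1$, so $\hm^2(\overline B(x,r)\cap X_0)=o(r^2)$; but this addresses $x\in X'$, not $x\in X_0$. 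Instead, I will use that $X_0=u(G_0)$ with $|G_0|_2=0$ together with the area inequality. Apply the second (lower) bound in Theorem \ref{thm:area-ineq}? No — rather, observe $\hm^2(X_0)$: since $u$ satisfies Lusin's condition (N)? Actually $u$ need not, but $\hm^2(u(G_0))$ can still be controlled because on $G_0$ the Jacobian $J(\apmd u_z)=0$ a.e., and the classical area formula for $u$ gives $\int_{G_0}J(\apmd u_z)\,dz=\int_X N(x,u,G_0)\,d\hm^2$, so $N(x,u,G_0)=0$ for $\hm^2$-a.e.\ $x$, i.e.\ $\hm^2(X_0)=0$ — wait, this needs $u$ to satisfy (N) on $G_0\setminus G_0^u$, which holds since $G_0^u$ is exactly the bad set. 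In fact $\hm^2(u(G_0\setminus G_0^u))=\int_X N(x,u,G_0\setminus G_0^u)\,d\hm^2=\int_{G_0\setminus G_0^u}J(\apmd u_z)\,dz=0$, and $\hm^2(u(G_0^u))$: hmm, this last piece is not immediately zero.

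Granting $\hm^2(X_0)=0$, the lemma follows quickly: $f$ is sense-preserving, hence by Proposition \ref{prop:sense-pres} it satisfies Lusin's condition (N), so $|f(X_0)|_2=0$. For a.e.\ $x\in X_0$ we then want $J_f(x)=0$, i.e.\ $|f(\overline B(x,r))|_2=o(r^2)$. Write $f(\overline B(x,r))\subset f(\overline B(x,r)\cap X_0)\cup f(\overline B(x,r)\cap X')$; the first piece has measure zero (Lusin (N) on the null set $X_0$), so $|f(\overline B(x,r))|_2\le|f(\overline B(x,r)\cap X')|_2$. Now apply the upper area inequality of Theorem \ref{thm:area-ineq} with $g=\mathbf 1_{f(\overline B(x,r))}$ and $F=\overline B(x,r)\cap X'$:
\begin{equation*}
|f(\overline B(x,r)\cap X')|_2\le\int_{f(X)}\mathbf 1\cdot N(y,f,F)\,dy\le 4\sqrt2\int_{\overline B(x,r)\cap X'}\rho_f^u\rho_f^l\,d\hm^2.
\end{equation*}
To conclude $J_f(x)=0$ for a.e.\ $x\in X_0$ we need the right-hand side to be $o(r^2)$ for a.e.\ such $x$. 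Since $X_0$ is $\hm^2$-null, this is the statement that the measure $\mu(F):=\int_F\rho_f^u\rho_f^l\,d\hm^2$ (absolutely continuous w.r.t.\ $\hm^2$) has upper density zero at a.e.\ point of the null set $X_0$ — which is not automatic in a non-Vitali space. Here I would invoke Theorem \ref{thm:Vit}: suppose, for contradiction, that $J_f>0$ on a subset $A\subset X_0$ with $\hm^2_X$-outer measure... but $\hm^2(X_0)=0$ forces any such $A$ to be null, and one runs the Vitali covering argument on the balls $\overline B(x,r)$ realizing $|f(\overline B(x,r))|_2\ge c\pi r^2$: a disjoint subfamily $\{V_j\}$ either has $\sum\diam(V_j)^2=\infty$ (impossible if the $V_j$ lie in a fixed bounded set, since then $\sum|f(V_j)|_2\le|f(\text{bounded set})|_2<\infty$ and $|f(V_j)|_2\ge c'\diam(V_j)^2$ using the lower area inequality together with $N\ge1$ from sense-preservation), or covers $A$ up to null sets, giving $|f(A)|_2\ge c\sum\diam(V_j)^2>0$, contradicting Lusin (N) on the null set $A\subset X_0$.

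\textbf{Main obstacle.} The crux is establishing $\hm^2(X_0)=0$, or more precisely getting a usable upper bound on $|f(\overline B(x,r))|_2$ for $x$ in the (a priori possibly non-null) set $X_0$. The parametrization $u$ is only weakly quasiconformal and need not satisfy Lusin's condition (N), so $\hm^2(u(G_0^u))$ is not obviously zero; one must instead argue that on $G_0^h$ the pulled-back map $h$ has vanishing Jacobian a.e.\ and combine this with the lower area inequality for $f$ and a Vitali covering of $X_0$ to force $J_f=0$ a.e.\ there directly, bypassing the question of whether $X_0$ itself is null. Carrying out this covering argument carefully — choosing the fine family $\{\overline B(x,r):|f(\overline B(x,r))|_2\ge\varepsilon\pi r^2\}$ on the hypothetical bad set, extracting a disjoint subfamily via Theorem \ref{thm:Vit}, and ruling out alternative (i) using $\sum|f(V_j)|_2<\infty$ together with the lower bound $|f(V_j)|_2\gtrsim\int_{V_j\cap X'}\rho_f^u\rho_f^l\gtrsim\diam(V_j)^2\cdot(\text{density})$ — is the technical heart of the proof.
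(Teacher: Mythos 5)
There is a genuine gap, and it is the one you yourself flag as the ``main obstacle'' without resolving it. Every route you propose ultimately leans on $\hm^2(X_0)=0$, which cannot be granted: $X_0=u(G_0)$ is the image of a Lebesgue-null set under a weakly quasiconformal parametrization that need not satisfy Lusin's condition (N), and $X_0$ is precisely the possibly-positive-measure, non-rectifiable part of $X$ on which the lemma has content (if $X_0$ were always null, the lemma would be an immediate consequence of condition (N) for $f$, and the restriction $F\subset X'$ in Theorem \ref{thm:area-ineq} would be pointless). Your fallback Vitali argument does not repair this: the final contradiction (``contradicting Lusin (N) on the null set $A\subset X_0$'') again requires $A$ to be null, and the step ruling out alternative (i) of Theorem \ref{thm:Vit} via $\sum_j|f(V_j)|_2\le|f(\text{bounded set})|_2<\infty$ fails because the $V_j$ are disjoint in $X$ while their images under the non-injective, possibly branched map $f$ can overlap with uncontrolled multiplicity. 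Your opening observation that $J(\apmd h_z)=0$ a.e.\ on $G_0$ is vacuous ($G_0$ is Lebesgue-null), and the inequality $|f(\overline B(x,r)\cap X')|_2\le 4\sqrt2\int_{\overline B(x,r)\cap X'}\rho^u_f\rho^l_f\,d\hm^2$ comes from the \emph{second} (lower) estimate of Theorem \ref{thm:area-ineq}, not the first; in any case you then need this quantity to be $o(r^2)$ at a.e.\ point of $X_0$, which you correctly note is not automatic and which you never establish.

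The missing idea is how to bound $\sum_j|f(B_j)|_2$ from above: pull each ball back to $U$ and use the classical area formula for $h=f\circ u$ there. The paper argues by contradiction on a compact $W\subset X_0$ with $J_f\ge 2\varepsilon$ on $W$ and $\hm^2(W)>0$, covers $W$ by balls $B(x,r)\subset N_\delta(W)$ with $|f(\overline B(x,r))|_2\ge\varepsilon\pi r^2$, and extracts a disjoint subfamily $\{B_j\}$ via the $5r$-covering lemma (Lemma \ref{lem:5r}), giving $\hm^2_{\delta}(W)\le 25\varepsilon^{-1}\sum_j|f(B_j)|_2$. The sets $F_j=u^{-1}(B_j)$ are pairwise disjoint subsets of the Euclidean domain $U$, where Theorem \ref{thm:classical-area-formula} and condition (N) for $h$ yield $|f(B_j)|_2\le\int_{F_j}J(\apmd h_z)\,dz$; summing over $j$ bounds $\hm^2_\delta(W)$ by $25\varepsilon^{-1}\int_{u^{-1}(N_\delta(W))}J(\apmd h_z)\,dz$, and this tends to $0$ as $\delta\to 0$ by absolute continuity of the integral, since $u^{-1}(W)$ is Lebesgue-null. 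This forces $\hm^2(W)=0$ directly and never requires $X_0$ itself to be null. Disjointness and multiplicity are handled in $U$, where the area formula is available --- that transfer is the step your proposal lacks.
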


\begin{proof} 
Suppose towards contradiction that there are $\varepsilon>0$ and $W \subset X_0$ with $\hm^2_X(W)>0$ and such that $J_f(x) \geq 2\varepsilon$ for every $x \in W$. By choosing a subset if necessary, we may assume that $W$ is compact. We fix $\delta >0$. Then the collection of balls $B(x,r) \subset N_\delta(W)$ satisfying $x \in W$, $0<10r<\delta$, and 
$$
|f(\overline{B}(x,r))|_2 \geq \varepsilon \pi r^2
$$
covers $W$. By the $5r$-covering lemma (Lemma \ref{lem:5r}) there is a subcollection 
$\{B_j=\overline{B}(x_j,r_j)\}$ of disjoint closed balls so that collection $\{5B_j\}$ covers $W$. Then 
\begin{equation} \label{eq:aara}
\hm_{\delta,X}^2(W) \leq \sum_j 25\pi r_j^2 \leq 25 \varepsilon^{-1}
\sum_j |f(B_j)|_2. 
\end{equation} 
As before, we denote $h=f \circ u$ and recall that $h$ satisfies Condition (N) by Proposition \ref{prop:sense-pres} and Theorem \ref{thm:unif}. In particular, if we denote $F_j=u^{-1}(B_j)$ then the classical area formula (Theorem \ref{thm:classical-area-formula}) shows that 
\begin{equation} 
\label{eq:aara2} 
|f(B_j)|_2 \leq \int_{F_j} J(\ap\md h_z) \, dz \quad \text{for all } j. 
\end{equation} 
Since sets $F_j$ are pairwise disjoint, combining \eqref{eq:aara} and 
\eqref{eq:aara2} shows that 
$$
\hm_{\delta,X}^2(W) \leq 25 \varepsilon^{-1} \int_{u^{-1}(N_\delta(W))} J(\ap\md h_z) \, dz. 
$$
But $u^{-1}(W) \subset G_0$ has zero area, so the right hand term tends to zero when $\delta \to 0$. We conclude that $\hm^2_X(W)=0$, which is a contradiction. The proof is complete. 
\end{proof}

\begin{proposition}\label{prop:area-ineq-jac}
    If $f\in\Nloc(X,\R^2)$ is sense-preserving, then
    \begin{align}\label{ineq:area-ineq-jac}
        \int_FJ_f(x)\,d\hm_X^2\leq\int_{\R^2}N(y,f,F)\,dy
    \end{align}
    for every Borel set $F\subset X$. If $f$ is furthermore open and discrete, then equality holds in \eqref{ineq:area-ineq-jac}.
\end{proposition}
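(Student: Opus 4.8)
The plan is to reduce the statement on $X$ to a statement on the parameter domain $U$ via the weakly quasiconformal parametrization $u\colon U\to X$, where the classical area formula (Theorem \ref{thm:classical-area-formula}) applies to $h=f\circ u$, and then transfer back. First I would write $F=(F\cap X_0)\cup(F\cap X')$. On $F\cap X_0$ we have $J_f=0$ almost everywhere by Lemma \ref{lem:J_f=0}, so that part contributes nothing to the left side, and the right side is nonnegative; hence it suffices to prove \eqref{ineq:area-ineq-jac} for Borel $F\subset X'$. The key geometric input is a pointwise comparison: for almost every $x\in X'$, I want to show
$$
J_f(x)\le J(\ap\md h_z)\quad\text{whenever }u(z)=x,\ z\notin G_0,
$$
or more precisely that $J_f(u(z))$ is controlled by $J(\apmd h_z)$ together with the (a.e.\ unit) Jacobian of $u$ at $z$. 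This should follow by covering $\overline B(x,r)$ efficiently by $u$-images of small balls in $U$ (using the $5r$-covering lemma together with Lusin (N) for $u$ and $h$ from Proposition \ref{prop:sense-pres} and Theorem \ref{thm:unif}), applying the classical area formula on those balls, and using Theorem \ref{thm:regular} to see that $u^{-1}$ of a ball of radius $r$ around a density point has area asymptotic to $\pi r^2$ — exactly as in the proof of Lemma \ref{lem:J_f=0}, but now keeping track of the constant rather than just showing it is zero.

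Granting the pointwise bound, I would argue as follows. Since $N(x,u,U)=1$ for a.e.\ $x\in u(U)$ (as recorded in the proof of Theorem \ref{thm:unif}), the map $u$ is essentially injective, so for Borel $F\subset X'$ the set $E=u^{-1}(F)\setminus G_0$ satisfies $u(E)=F$ up to measure zero and $u|_E$ is essentially a bijection onto $F$ preserving $\hm^2_X$ on the relevant sets (again via Theorem \ref{thm:classical-area-formula} and $J(\apmd u_z)=1$ a.e., which comes from the John's-ellipse-is-a-disc normalization). Then
$$
\int_F J_f(x)\,d\hm^2_X \le \int_E J(\apmd h_z)\,dz = \int_{\R^2} N(y,h,E)\,dy = \int_{\R^2} N(y,f,F)\,dy,
$$
where the first inequality is the pointwise comparison integrated against $\hm^2_X$ (converting to a $dz$-integral over $E$ by the area formula for $u$), the middle equality is the classical area formula for $h$, and the last equality uses $h=f\circ u$ with $u|_E$ essentially injective so that preimage counts match. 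This proves \eqref{ineq:area-ineq-jac}.

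For the equality statement under the additional hypothesis that $f$ is open and discrete: such $f$ satisfies Lusin's condition (N) (Proposition \ref{prop:sense-pres}), and, being open, discrete and sense-preserving, it has a well-defined local index, so $\hm^2_X$-a.e.\ point of $f(B(x,r))$ is covered with multiplicity at least the number of preimages — this is the reverse of the covering estimate. Concretely, I expect to reverse the inequality in the pointwise bound by using condition (N) for $f$ to show $J_f(x)\ge J(\apmd h_z)$ a.e.\ on $X'$ (a ball $\overline B(x,r)$ has image of area at least that predicted by the derivative, since $f$ does not collapse sets of positive measure and is locally injective off a measure-zero set by discreteness plus openness), and then the same chain of equalities becomes
$$
\int_F J_f(x)\,d\hm^2_X = \int_E J(\apmd h_z)\,dz = \int_{\R^2} N(y,f,F)\,dy.
$$

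The main obstacle I anticipate is establishing the pointwise identity (or the two one-sided inequalities) relating $J_f(x)=\limsup_r |f(\overline B(x,r))|_2/(\pi r^2)$ to $J(\apmd h_z)$: this requires carefully controlling how $u$ distorts the ball $\overline B(x,r)$ in $X$ back to a set in $U$, and in particular showing that one may take preimages whose Lebesgue measure is $(1+o(1))\pi r^2$ as $r\to 0$ at $\hm^2$-a.e.\ point — here the fact that $X$ need not be doubling or Vitali is precisely what makes this delicate, and Theorem \ref{thm:regular} together with the near-isometry normalization of $u$ (John's ellipse a disc, $J(\apmd u_z)=1$ a.e.) is what saves the argument. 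The rest is bookkeeping with the area formula and the essential injectivity of $u$.
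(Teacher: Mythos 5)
Your opening reduction to $F\subset X'$ via Lemma \ref{lem:J_f=0} matches the paper, but the core of your argument --- a pointwise a.e.\ identity relating $J_f(u(z))$ to $J(\apmd h_z)$ --- is precisely the step you do not supply, and the tools you invoke cannot deliver it. First, the normalization ``John's ellipse is a disc'' only controls the eccentricity of $\apmd u_z$, not its size, so your claim that $J(\apmd u_z)=1$ a.e.\ is unjustified (and false in general); at best one could hope for a chain rule $J_f(u(z))\,J(\apmd u_z)=J(\apmd h_z)$. Second, to differentiate at $z$ you would have to average $J(\apmd h)$ over the sets $u^{-1}(\overline B(u(z),r))$, which are not Euclidean balls and need not shrink nicely to $z$: $u$ is merely monotone, so point preimages can be nondegenerate continua and $u^{-1}(\overline B(x,r))$ may have diameter far larger than its measure suggests, so Lebesgue differentiation in $U$ does not apply to these sets. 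Third, redoing the covering argument of Lemma \ref{lem:J_f=0} while ``keeping track of the constant'' yields at best $\int_F J_f\,d\hm^2_X\le 25\int_{\R^2} N(y,f,F)\,dy$, since the $5r$-covering lemma only produces disjoint balls whose $5$-fold dilates cover; it cannot produce the constant $1$ required by \eqref{ineq:area-ineq-jac}.

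The paper sidesteps the parametrization entirely at this stage and differentiates directly on $X$. It slices $F$ into the level sets $A_t^k=\{x\in F:\,t^{k-1}\le J_f(x)<t^k\}$ and, for a compact $A\subset A_t^k$, observes that the definition of $J_f$ as a limsup together with Theorem \ref{thm:regular} makes the closed metric balls satisfying \eqref{ineq:bb1} and \eqref{ineq:bb2} a \emph{fine} covering of $A$; the finiteness hypothesis of the Vitali covering theorem (Theorem \ref{thm:Vit}) is verified from \eqref{ineq:bb1} and $\hm^2(N_\varepsilon(A))<\infty$, producing disjoint balls $B_j$ covering $A$ up to an $\hm^2$-null set. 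The target side is then handled by the trivial bound $|f(B_j)|_2\le\int_{\R^2}N(y,f,B_j)\,dy$ and disjointness, giving \eqref{ineq:aa1} with constant $t(1+\varepsilon)\to1$; no area formula for $f$ or for $h$ is needed here. The reverse inequality for open and discrete $f$ is obtained by the same covering after additionally requiring $f|_{B_j}$ to be injective (possible off the discrete branch set) and invoking condition (N). If you wanted to salvage your route, the pointwise identity would itself require a Vitali-type differentiation on $X$ of exactly this kind, so the detour through $u$ adds difficulty rather than removing it.
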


\begin{proof}
We start with the proof of \eqref{ineq:area-ineq-jac}. By Lemma \ref{lem:J_f=0} we may assume that $F \subset X' \setminus E$, where $E$ is the exceptional set in Theorem \ref{thm:regular}. 
Given $1<t<2$ and $k \in \mathbb{Z}$, denote 
$$
A_t^k=\{x \in F: \, t^{k-1} \leq J_f(x) < t^k\}. 
$$
Then \eqref{ineq:area-ineq-jac} follows from 
\begin{equation}
\label{ineq:aa1}
\int_{A_t^k} J_f(x) \, d\hm^2_X \leq t \int_{\R^2} N(y,f,A_t^k)\, dy 
\end{equation}
by summing both sides over $k$ and letting $t \to 1$. 

To prove \eqref{ineq:aa1}, we fix $t$ and $k$ and notice that it suffices 
to prove \eqref{ineq:aa1} for all compact subsets $A \subset A_t^k$. 
We fix such an $A$, and $\varepsilon>0$. 

Then, since $A \subset X' \setminus E$, Theorem \ref{thm:regular} and the definition of $A_t^k$ show that the collection 
$$
\mathcal{F}=\{\overline{B}(x,r): \, x \in A, \, 0<r<\varepsilon,\, \eqref{ineq:bb1} \text{ and } \eqref{ineq:bb2} \text{ hold } \} 
$$
is a fine covering of $A$; here we apply conditions  
\begin{equation} \label{ineq:bb1} 
(1+\varepsilon)^{-1} \pi r^2 \leq 
\hm^2(\overline{B}(x,r)\cap X')  \leq \hm^2(\overline{B}(x,r))  \leq  
(1+\varepsilon) \pi r^2, 
\end{equation}
and 
\begin{equation} \label{ineq:bb2} 
(1+\varepsilon)^{-1} t^{k-1}  \hm^2(\overline{B}(x,r)) \leq 
|f(\overline{B}(x,r))|_2
\leq (1+\varepsilon) t^k \hm^2(\overline{B}(x,r)). 
\end{equation}

Since $X$ is homeomorphic to $\mathbb{R}^2$ and $A$ is compact, we may choose $\varepsilon$ to be small enough so that $\hm^2(N_\varepsilon(A))<\infty$. Then \eqref{ineq:bb1} shows that 
if $\mathcal{G}$ is a subcollection of $\mathcal{F}$ consisting of pairwise disjoint balls $B_1,B_2,\ldots$, $B_j=\overline{B}(x_j,r_j)$, 
then 
$$
(1+\varepsilon)^{-1}\pi \sum_j r_j^2 
\leq \sum_j \hm^2(B_j) \leq \hm^2(N_\varepsilon(A)) < \infty. 
$$
Thus, by the Vitali covering theorem (Theorem \ref{thm:Vit}), the pairwise disjoint balls $B_j$ can be chosen so that 
\begin{equation} 
\label{eq:almost} 
\hm^2(A \setminus \cup_j B_j)=0. 
\end{equation}
Using \eqref{ineq:bb2} and \eqref{eq:almost}, we have 
\begin{eqnarray*} 
\int_A J_f(x) \, d\hm^2_X 
&\leq& t^k \hm^2(A) \leq t^k \sum_j \hm^2(B_j) 
\leq t(1+\varepsilon) \sum_j |f(B_j)|_2 \\
&\leq& t(1+\varepsilon) \sum_j \int_{\R^2} N(y,f,B_j) \, dy \\
&=& t(1+\varepsilon) \int_{\R^2}N(y,f,\cup_j B_j) \, dy \\
&\leq& t(1+\varepsilon) \int_{\R^2}N(y,f,N_\varepsilon(A)) \, dy. 
\end{eqnarray*}
By compactness of $A$, letting $\varepsilon \to 0$ yields \eqref{ineq:aa1} 
for $A_t^k$ replaced with $A$. Inequality \eqref{ineq:area-ineq-jac} follows. 

We now assume that $f$ is open and discrete and claim that 
 \begin{align}\label{ineq:area-ineq-jac_2}
        \int_FJ_f(x)\,d\hm_X^2\geq\int_{\R^2}N(y,f,F)\,dy
    \end{align}
for every Borel set $F \subset X$. Recall that $|f(X_0)|_2=0$ 
by Proposition \ref{prop:sense-pres} and Theorem \ref{thm:unif}. Therefore, we may again assume that $F \subset X' \setminus E$. 

Also, recall that an open and discrete map is locally invertible outside a discrete set $\mathcal{B}_f$. Therefore, we may replace $F$ with $F \setminus \mathcal{B}_f$ if needed and assume without loss of generality that $f$ is locally invertible at every $x \in F$. 

As in the proof of \eqref{ineq:area-ineq-jac}, we see that \eqref{ineq:area-ineq-jac_2} follows if we can show that 
\begin{equation}
\label{ineq:armak}
\int_{\R^2} N(y,f,A) \, dy \leq t \int_A J_f(x) \, d\hm^2_X 
\end{equation}
for every $1<t<2$, $k \in \mathbb{Z}$, and every compact set $A \subset A_t^k$. We can choose a family of pairwise disjoint balls $B_1,B_2,\ldots$ satisfying the conditions of collection $\mathcal{F}$ above, and require the additional property that $f_{|B_j}$ is invertible for each $j$. In particular, \eqref{eq:almost} holds and as $f$ satisfies Lusin's condition (N), by Proposition \ref{prop:sense-pres}, also $
|f(A \setminus \cup_j B_j)|_2=0$. Combining with \eqref{ineq:bb2}, we obtain 
\begin{eqnarray*} 
\int_{\R^2}N(y,f,A) \, dy 
&\leq& \int_{\R^2} N(y,f, \cup_j B_j) \, dy 
= \sum_j \int_{\R^2} N(y,f,B_j) \, dy \\
&=&\sum_{j} |f(B_j)|_2 
\leq (1+\varepsilon)t^k \sum_j \hm^2(B_j) \\
&\leq& (1+\varepsilon)t^k \hm^2(N_\varepsilon(A)). 
\end{eqnarray*}
Letting $\varepsilon \to 0$, the last term converges to 
$$
t^k \hm^2(A) \leq t \int_E J_f(x) \, d\hm^2_X. 
$$
Combining the estimates gives \eqref{ineq:armak}. The proof is complete. 
\end{proof}

Proposition \ref{prop:area-ineq-jac} together with Theorem \ref{thm:area-ineq} and Proposition \ref{prop:sense-pres} now imply the following corollary.
\begin{corollary} \label{cor:rect_dis}
    If $f \in \Nloc(X,\R^2)$ is sense-preserving, then 
 $$
J_f(x) \leq 
4\sqrt{2}\, \rho_f^u(x)\rho_f^l(x) \quad \text{for almost every } x \in X'.
$$ 
If $f$ is furthermore open and discrete, then $$
\rho_f^u(x)\rho_f^l(x) \leq 
4\sqrt{2}\, J_f(x) \quad \text{for almost every } x \in X'.
$$ 
\end{corollary}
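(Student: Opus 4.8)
The plan is to derive both inequalities by combining the two integral estimates already in hand and then localizing; no genuinely new analytic input is needed. Recall first that, by Proposition \ref{prop:sense-pres}, a sense-preserving $f\in\Nloc(X,\R^2)$ is continuous and satisfies Lusin's condition (N), so both halves of the area inequality (Theorem \ref{thm:area-ineq}) are at our disposal, applied with $Y=\R^2$, where $\hm^2_{\R^2}$ is the Lebesgue measure.

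For the first inequality I would fix an arbitrary Borel set $F\subset X'$ and chain together Proposition \ref{prop:area-ineq-jac}, which gives $\int_F J_f\,d\hm^2_X\le\int_{\R^2}N(y,f,F)\,dy$, with the lower bound in Theorem \ref{thm:area-ineq} applied to $g\equiv 1$, which gives $\int_{\R^2}N(y,f,F)\,dy\le 4\sqrt2\int_F\rho_f^u\rho_f^l\,d\hm^2_X$. This yields
\begin{equation*}
\int_F J_f\,d\hm^2_X\le 4\sqrt2\int_F\rho_f^u\rho_f^l\,d\hm^2_X\qquad\text{for every Borel }F\subset X'.
\end{equation*}
Since $\rho_f^l\le\rho_f^u$ a.e.\ and $\rho_f^u\in L^2_{\mathrm{loc}}(X)$, the right-hand side is finite whenever $F$ has compact closure; hence $J_f\in L^1_{\mathrm{loc}}(X')$ and both integrands are finite a.e. Testing the display against the set $\{x\in X':J_f(x)>4\sqrt2\,\rho_f^u(x)\rho_f^l(x)\}$ intersected with the members of an exhaustion of $X$ by sets of finite $\hm^2$-measure (recall $X$ is separable with locally finite $\hm^2$, hence $\sigma$-finite), I would conclude that this set is $\hm^2$-null, which is the first claim.

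For the second claim, assuming $f$ open and discrete, Proposition \ref{prop:area-ineq-jac} upgrades to the equality $\int_F J_f\,d\hm^2_X=\int_{\R^2}N(y,f,F)\,dy$, and the upper bound in Theorem \ref{thm:area-ineq} with $g\equiv 1$ gives $\int_F\rho_f^u\rho_f^l\,d\hm^2_X\le 4\sqrt2\int_{\R^2}N(y,f,F)\,dy$. Combining, $\int_F\rho_f^u\rho_f^l\,d\hm^2_X\le 4\sqrt2\int_F J_f\,d\hm^2_X$ for every Borel $F\subset X'$; by the first claim $J_f\le 4\sqrt2\,\rho_f^u\rho_f^l$ a.e.\ on $X'$, so $J_f\in L^1_{\mathrm{loc}}(X')$ with finite values a.e., and the same localization argument as above yields $\rho_f^u\rho_f^l\le 4\sqrt2\,J_f$ a.e.\ on $X'$.

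I expect no serious obstacle: the hard analytic content is packaged into Theorem \ref{thm:area-ineq} and Proposition \ref{prop:area-ineq-jac}. The only points that need care are the bookkeeping of measurability (the weak gradients are Borel, and measurability of $J_f$ is implicit in Proposition \ref{prop:area-ineq-jac}, where it already appears under an integral sign) and the verification that local integrability of the integrands legitimizes passing from the integral inequalities, valid for all Borel subsets of $X'$, to the pointwise inequalities almost everywhere.
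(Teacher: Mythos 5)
Your argument is correct and is exactly the route the paper intends: the paper proves this corollary with the one-line remark that it follows from Proposition \ref{prop:area-ineq-jac}, Theorem \ref{thm:area-ineq} (with $g\equiv 1$), and Proposition \ref{prop:sense-pres}, which is precisely the chain of integral inequalities you spell out. Your added care about finiteness of the integrals (via $\rho_f^u\rho_f^l\le(\rho_f^u)^2\in L^1_{\mathrm{loc}}$) before localizing to a pointwise a.e.\ inequality is the right way to fill in the details the paper leaves implicit.
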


\begin{corollary}\label{cor:jacohomeo} 
Let $f\in\Nloc(X,\R^2)$ be a homeomorphism and $\mu$ the corresponding pull-back measure, i.e., $\mu(A)=|f(A)|_2$ for all Borel sets $A \subset X$. Then $J_f$ is the Radon-Nikodym derivative of $\mu$ with respect to $\hm^2_X$.  
\end{corollary}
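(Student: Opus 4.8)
The plan is to deduce Corollary \ref{cor:jacohomeo} from Proposition \ref{prop:area-ineq-jac} applied to the homeomorphism $f$. Since $f$ is a homeomorphism, it is sense-preserving, open and discrete (and in particular satisfies Lusin's condition (N) by Proposition \ref{prop:sense-pres}), so Proposition \ref{prop:area-ineq-jac} yields equality
\begin{equation*}
\mu(F)=|f(F)|_2=\int_{\R^2}N(y,f,F)\,dy=\int_F J_f(x)\,d\hm^2_X
\end{equation*}
for every Borel set $F\subset X$, where the middle equality uses that $N(y,f,F)=\mathbf{1}_{f(F)}(y)$ for a homeomorphism. This is exactly the assertion that $J_f$ is the density of $\mu$ with respect to $\hm^2_X$, provided we check that $\mu$ is indeed absolutely continuous with respect to $\hm^2_X$: if $\hm^2_X(F)=0$ then $\int_F J_f\,d\hm^2_X=0$, so $\mu(F)=0$ by the displayed identity. (Alternatively, absolute continuity is Lusin's condition (N) for $f$, already available.)

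First I would record that $f$, being a homeomorphism onto $f(X)\subset\R^2$, is automatically sense-preserving or sense-reversing; after composing with a reflection if necessary we may assume it is sense-preserving, and this does not affect $J_f$, $\mu$, or the conclusion. Then a homeomorphism is trivially open and discrete, so the equality case of Proposition \ref{prop:area-ineq-jac} applies. Next I would observe that for a homeomorphism the multiplicity function is the indicator of the image, $N(y,f,F)=\mathbf{1}_{f(F)}(y)$, whence $\int_{\R^2}N(y,f,F)\,dy=|f(F)|_2=\mu(F)$. Combining the two identities gives $\mu(F)=\int_F J_f\,d\hm^2_X$ for all Borel $F$, which is the defining property of the Radon-Nikodym derivative once absolute continuity of $\mu$ with respect to $\hm^2_X$ is noted.

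There is essentially no obstacle here: the corollary is a direct unwinding of Proposition \ref{prop:area-ineq-jac} in the homeomorphic case, with the only minor point being the orientation normalization (handled by a reflection) and the remark that the identity itself forces $\mu\ll\hm^2_X$, so that $J_f\in L^1_{\mathrm{loc}}(X)$ and the Radon-Nikodym statement is literally satisfied. I would keep the write-up to a few lines.
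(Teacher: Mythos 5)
Your proposal is correct and follows essentially the same route as the paper: the authors likewise deduce the corollary directly from the equality case of Proposition \ref{prop:area-ineq-jac} (noting $|f(X_0)|_2=0$, which your identity also encodes), using that $N(y,f,F)=\chi_{f(F)}(y)$ for a homeomorphism. Your extra remarks on the orientation normalization and on absolute continuity are fine but not a departure from the paper's argument.
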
 

\begin{proof} 
Recalling that $|f(X_0)|_2=0$, the claim follows from Proposition \ref{prop:area-ineq-jac} and the definition of Radon-Nikodym derivative. 
\end{proof}

\section{Proof of the main theorem}
The goal of this section is to prove Theorem \ref{thm:main}. Part \ref{thm:main-2} (from analytic distortion to distortion along paths) follows by combining Lemma \ref{lem:J_f=0} and Corollary \ref{cor:rect_dis}, and recalling that if $f \in \operatorname{FDA}(X,\R^2)$ then $\rho_f^u=0$ almost everywhere in the zero set of $J_f$. 

It remains to prove Part \ref{thm:main-1} (from distortion along paths to analytic distortion). We know from Theorem \ref{thm:main-MR23} that $f$ is open and discrete. By Corollary \ref{cor:rect_dis}, analytic distortion is controlled by distortion along paths in $X'$. Therefore, the proof of Theorem \ref{thm:main} is complete after we have established the following result. 

\begin{proposition}\label{prop:J_f=0-implies-lower-grad-0}
    If $f \in \operatorname{FDP}(X,\R^2)$ and $K_f\in\Lloc^1(X)$, then $\rho_f^l(x)=0$ (and therefore $\rho_f^u(x)=0$) for almost every $x \in X_0$. 
\end{proposition}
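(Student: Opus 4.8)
The plan is to argue by contradiction: suppose there exist $\varepsilon>0$ and a compact set $W\subset X_0$ with $\hm^2_X(W)>0$ on which $\rho_f^l\geq 2\varepsilon$. Since $K_f\in\Lloc^1(X)$ and $\rho_f^u\leq K_f\,\rho_f^l$ (with $\rho_f^l\leq\rho_f^u$), the product $\rho_f^u\rho_f^l\leq K_f(\rho_f^l)^2\le(\rho_f^u)^2$ need not be controlled directly, so instead I would first apply Lemma \ref{lem:max-fct} to the function $g=K_f\in\Lloc^1(X)$: after passing to a further compact subset $W'\subset W$ with $\hm^2_X(W')>0$, we obtain $L\geq 1$ with $\mathcal{M}K_f(x)\leq L$ for all $x\in W'$. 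The point of this is that on balls centered in $W'$, the average of $K_f$ is bounded, which will let us convert a lower bound on $\rho_f^l$ into a lower bound on $\int_{\overline B(x,r)}\rho_f^u\rho_f^l$ relative to $\hm^2_X(\overline B(x,r))$, via the area inequality (Theorem \ref{thm:area-ineq}).

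The key step: transport the problem to the parametrizing disc. Set $h=f\circ u$; recall $h$ is sense-preserving and $W'\subset X_0=u(G_0)$, so $u^{-1}(W')\subset G_0$ has $|\cdot|_2$-measure zero. The strategy is to pick, for each $x\in W'$ and small $r$, the ball $\overline B(x,r)$ and estimate its $f$-image area from below using $\rho_f^l\geq 2\varepsilon$. Concretely, a $2$-modulus / length argument on curves through $x$ inside $\overline B(x,r)$ — using that $\rho_f^l$ is a weak lower gradient — should give $\diam f(\overline B(x,r))\gtrsim \varepsilon r$, hence $|f(\overline B(x,r))|_2\gtrsim$ something; but area, not diameter, is what we need, so more robustly I would use Theorem \ref{thm:area-ineq} to write $\int_{\overline B(x,r)\cap X'}\rho_f^u\rho_f^l\,d\hm^2_X\le 4\sqrt2\,|f(\overline B(x,r))|_2$, combine with $\rho_f^u\rho_f^l\ge(\rho_f^l)^2\ge 4\varepsilon^2$ on $W'\cap X'$... which fails because $W'\subset X_0$ so $W'\cap X'=\emptyset$. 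The resolution is to work with neighborhoods: choose balls so that $\hm^2_X(\overline B(x,r)\cap W')$ is a definite fraction of $\hm^2_X(\overline B(x,r))$ — impossible for a.e.\ $x\in W'$ only if $\hm^2_X$ restricted to $W'$ is non-trivial, which it is, so by a density argument for $\hm^2_X\lfloor W'$ (valid at $\hm^2$-a.e.\ point of $W'$ even in a non-doubling surface, using the Vitali property, Theorem \ref{thm:Vit}), we get points $x$ where $\hm^2_X(\overline B(x,r)\cap W')\ge \tfrac12\hm^2_X(\overline B(x,r))$ along a sequence $r\to 0$.

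With such a density point $x$ fixed, the plan is: the area formula for $h$ (Theorem \ref{thm:classical-area-formula}) gives $|f(\overline B(x,r))|_2\leq\int_{u^{-1}(\overline B(x,r))}J(\apmd h_z)\,dz$, and splitting $u^{-1}(\overline B(x,r))=u^{-1}(\overline B(x,r)\cap W')\,\cup\,u^{-1}(\overline B(x,r)\setminus W')$, the first piece lies in $G_0$ and contributes $0$; so $|f(\overline B(x,r))|_2\leq\int_{u^{-1}(\overline B(x,r)\setminus W')}J(\apmd h_z)\,dz$. Now bound this Jacobian integral by $2\sqrt2\int_{\overline B(x,r)\setminus W'}\rho_f^u\rho_f^l\,d\hm^2_X$ (via Lemma \ref{lem:MR23-2.9}, $N(\cdot,u,\cdot)=1$ a.e., and the FDP chain rule bounds \eqref{hanni}), then by $\rho_f^u\rho_f^l\le K_f(\rho_f^l)^2$... no — I want an upper bound on the Jacobian integral over the complement and a lower bound on $|f(\overline B(x,r))|_2$; these push in opposite directions. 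The correct combination: use the reverse area inequality and the density of $W'$ to show $|f(\overline B(x,r))|_2\gtrsim\varepsilon^2\hm^2_X(\overline B(x,r)\cap W')\gtrsim\varepsilon^2\hm^2_X(\overline B(x,r))$, while simultaneously $|f(\overline B(x,r))|_2\le\int_{u^{-1}(\overline B(x,r))\setminus G_0}J(\apmd h_z)\,dz$, and then — summing a Vitali-disjoint family of such balls covering $W'$ up to $\hm^2_X$-measure zero, as in the proof of Lemma \ref{lem:J_f=0} — the left sides sum to $\gtrsim\varepsilon^2\hm^2_X(W')>0$ while the right sides sum to $\le\int_{u^{-1}(N_\delta(W'))\setminus G_0}J(\apmd h_z)\,dz$, which tends to $\int_{u^{-1}(W')\setminus G_0}J(\apmd h_z)\,dz=0$ as $\delta\to0$ since $u^{-1}(W')\subset G_0$. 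This contradiction finishes the proof.

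\textbf{Main obstacle.} The delicate point is establishing the lower bound $|f(\overline B(x,r))|_2\gtrsim\varepsilon^2\hm^2_X(\overline B(x,r))$ along a sequence $r\to0$ at $\hm^2$-a.e.\ $x\in W'$: one must combine a density estimate for the non-doubling measure $\hm^2_X\lfloor W'$ (legitimate via the Vitali covering theorem) with the reverse area inequality of Theorem \ref{thm:area-ineq} and the maximal-function bound on $K_f$ from Lemma \ref{lem:max-fct} to absorb the distortion — the interplay of these three, and checking that the exceptional sets ($G_0$, $E$ from Theorem \ref{thm:regular}, the branch set considerations) are genuinely negligible throughout, is where the real work lies. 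Everything else is a packaging of the argument already used for Lemma \ref{lem:J_f=0}.
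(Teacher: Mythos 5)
Your overall contradiction scheme is the right one (and matches the paper's): produce points $x$ in the bad set where $\liminf$ or $\limsup$ of $|f(\overline B(x,r))|_2/r^2$ is positive, and play this off against the vanishing of $J_f$ on $X_0$ coming from $u^{-1}(X_0)\subset G_0$ (Lemma \ref{lem:J_f=0}). The gap is in the only step that carries real content: the lower bound $|f(\overline B(x,r))|_2\gtrsim\varepsilon^2\,\hm^2(\overline B(x,r)\cap W')$. You propose to get it from Theorem \ref{thm:area-ineq}, but both directions of that inequality are stated, and proved, only for Borel sets $F\subset X'$; they are obtained from the approximate metric differentiability of $h=f\circ u$ on $U\setminus G_0$, which gives no information on $X_0=u(G_0)$. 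Since $W'\subset X_0$, the set $\overline B(x,r)\cap W'$ is disjoint from $X'$, so the area inequality simply does not apply to it, and applying it to $\overline B(x,r)\cap X'$ tells you nothing about $\rho_f^l$ on $W'$. You notice this obstruction yourself mid-proof, but the proposed fix (density points of $\hm^2\llcorner W'$) makes matters worse rather than better: the more of $\overline B(x,r)$ lies in $W'$, the less of it lies in $X'$ where your only quantitative tool lives. There is no way to convert the pointwise hypothesis $\rho_f^l\geq 2\varepsilon$ on a subset of $X_0$ into an area bound using measure-theoretic tools alone, because on $X_0$ the link between $\rho_f^u\rho_f^l$ and image area is exactly what is unknown — assuming it is circular.

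What is genuinely needed, and what the paper does, is to return to the \emph{definition} of the lower gradient as a statement about lengths of image curves. For almost every $x\in A$ one produces an actual rectifiable curve $\gamma_x$ through $x$ along which the lower gradient inequality holds and $(f\circ\gamma_x)'(t)>0$ at the parameter of $x$ (Lemma \ref{lem:good-path}; the existence of such a curve is itself nontrivial and is proved by showing that otherwise $\chi_{X\setminus A_0}\rho_f^u$ would be a weak upper gradient, contradicting minimality). This yields $\diam(f(|\gamma_R|))\geq\varepsilon R$ for the arclength subcurves $\gamma_R$ (Corollary \ref{cor:length-f(gamma-x)}). The passage from a diameter bound on the image of a curve to an area bound on $f(B(x,MR))$ is then topological: using openness, discreteness and local invertibility of $f$ (Theorem \ref{thm:main-MR23}), the preimages $\eta_t'$ of horizontal segments through $f(|\gamma_R|)$ are continua meeting $|\gamma_R|$, and the coarea inequality together with the maximal-function bound on $K_f$ shows that most of them stay inside $B(x,MR)$ (Lemma \ref{lem:E_M}) — this is where your $\mathcal{M}K_f\leq L$ idea and the area inequality on $X'$ legitimately enter, applied to the sets $F_M\cap X'$, not to subsets of $X_0$. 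Fubini then gives $|f(B(x,MR))|_2\geq\varepsilon R^2$, i.e.\ Proposition \ref{prop:limit-measure}, and the contradiction with Lemma \ref{lem:J_f=0} closes the argument. So the curve-based construction and the level-set argument are the missing ideas, not an optional refinement.
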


\subsection{Vanishing lower gradient}
This section is devoted to proving Proposition \ref{prop:J_f=0-implies-lower-grad-0}. Let $f\in \operatorname{FDP}(X,\R^2)$ with $K_f\in\Lloc^1(X)$. Towards a contradiction we assume that there exists a set $A\subset X_0$ of positive measure such that $\rho_f^l(x)>0$ for every $x\in A$.

\begin{lemma}\label{lem:good-path}
    There exists a set $A''\subset A$, $\hm^2_X(A\setminus A'')=0$, such that for every $x\in A''$ we find a rectifiable curve  $\gamma_x\colon[0,\ell(\gamma_x)]\to X$ parametrized by arclength and such that
    \begin{enumerate}[label=(\roman*)]
        \item \label{item-1:good-path} the lower gradient inequality \eqref{ineq:lower_gradient} holds for the pair $(f,\rho_f^l)$ on $\gamma_x$,
        \item \label{item-2:good-path} $f$ is absolutely continuous along $\gamma_x$, and
        \item \label{item-3:good-path} there is $0< t <\ell(\gamma_x)$ such that $\gamma_x(t)=x$ and $f\circ\gamma_x$ is differentiable at $t$ with $(f\circ\gamma_x)'(t)>0$. 
    \end{enumerate}
\end{lemma}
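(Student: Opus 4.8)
The plan is to extract the desired curve $\gamma_x$ from three separate ``almost every curve'' conditions, each of which fails only on a curve family of zero modulus, and then to promote those modulus statements into a full-measure statement about starting points $x \in A$. First I would fix, once and for all, representatives: let $\rho_f^l$ be the maximal weak lower gradient and recall from the definition that the lower gradient inequality \eqref{ineq:lower_gradient} holds for $(f,\rho_f^l)$ along all curves outside an exceptional family $\Gamma_1$ with $\mod \Gamma_1 = 0$. Since $f \in \Nloc(X,\R^2)$ and, by Proposition~\ref{prop:sense-pres}, $f$ is continuous and satisfies Lusin's condition (N), standard Newton--Sobolev theory (see \cite{HKST:15}*{Chapter~6--8}) gives a family $\Gamma_2$ with $\mod \Gamma_2 = 0$ outside of which $f$ is absolutely continuous along the curve; similarly $\rho_f^l \in L^1_{\text{loc}}$ together with $K_f \in L^1_{\text{loc}}$ and $\rho_f^u \le K_f \rho_f^l$ gives a family $\Gamma_3$ with $\mod \Gamma_3 = 0$ outside of which $\int_\gamma \rho_f^l\,ds < \infty$ and $\int_\gamma \rho_f^u\,ds < \infty$. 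Set $\Gamma_0 = \Gamma_1 \cup \Gamma_2 \cup \Gamma_3$; then $\mod \Gamma_0 = 0$, and every curve not in $\Gamma_0$ satisfies \ref{item-1:good-path} and \ref{item-2:good-path}.

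Next I would produce, for almost every $x \in A$, a curve through $x$ that avoids $\Gamma_0$ and in addition satisfies \ref{item-3:good-path}. The natural device is the pullback under the weakly quasiconformal parametrization $u \colon U \to X$ of Theorem~\ref{thm:unif}: by inequality \eqref{ineq:wqc}, $u$ maps families of positive modulus to families of positive modulus, so $u^{-1}(\Gamma_0)$ has zero modulus in $U$, hence zero $2$-modulus in the classical Euclidean sense. On the Euclidean side I can use Fubini together with the fact that almost every line (in a fixed direction, or in almost every direction through a fixed point) avoids any fixed zero-modulus family and, being a curve in $\R^2$, carries good one-dimensional differentiability properties for $h = f \circ u$. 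Concretely: since $A \subset X_0 = u(G_0)$ has positive $\hm^2_X$-measure while $|G_0|_2 = 0$, I cannot simply pull $A$ back to a positive-measure set in $U$; instead I would argue on $X$ directly. The cleanest route is: the set of curves along which $(f\circ\gamma)'$ exists and is positive at the parameter value corresponding to $x$ can be detected using $\rho_f^l$ — indeed, along a curve satisfying \ref{item-1:good-path}, $\ell(f\circ\gamma|_{[a,b]}) \ge \int_a^b \rho_f^l(\gamma(s))\,ds$, so at every Lebesgue point $s_0$ of $s \mapsto \rho_f^l(\gamma(s))$ where this value is positive, the difference quotients of $f\circ\gamma$ are bounded below, and combined with absolute continuity \ref{item-2:good-path} (which gives existence of $(f\circ\gamma)'$ a.e.) we get $(f\circ\gamma)'(s_0) > 0$ at a.e.\ such $s_0$. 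So it remains to find, for a.e.\ $x \in A$, one good curve $\gamma$ through $x$ with $\rho_f^l(\gamma(t)) > 0$ at the crossing parameter $t$ being a Lebesgue point of $s \mapsto \rho_f^l(\gamma(s))$.

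For that I would invoke a Fuglede-type / coarea argument: consider curve families in $X$ and use that, because $\rho_f^l > 0$ on all of $A$ by our contradiction hypothesis, the "bad" set $\{\rho_f^l = 0\}$ is disjoint from $A$; one builds a foliation of a neighborhood of (most of) $A$ by curves — pushing forward Euclidean line segments under $u$ — such that the union of curves that either hit $\Gamma_0$ or fail the Lebesgue-point condition at their intersection with $A$ has zero modulus, hence meets $A$ in a set of zero $\hm^2_X$-measure by the coarea/modulus inequality. Discarding that set yields $A'' \subset A$ with $\hm^2_X(A \setminus A'') = 0$ and, for each $x \in A''$, a curve $\gamma_x$ with all three properties after reparametrizing by arclength and restricting to a compact subcurve with $x$ in its interior (legitimate since rectifiable and $0 < t < \ell(\gamma_x)$ is required).

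The main obstacle I anticipate is exactly the tension between "$A \subset X_0$ has $\hm^2_X$-measure zero preimage under $u$" and "we need a foliation of $A$ by curves with controlled modulus": one cannot transport $A$ to positive Euclidean measure, so the modulus/coarea estimate must be run on $X$ itself, using that $u$ preserves positivity of modulus (from \eqref{ineq:wqc}) in the reverse direction to what is usually convenient. Care is needed to ensure the pushed-forward curves actually sweep out $\hm^2_X$-almost all of $A$ and that the "Lebesgue point at the crossing" condition is a zero-modulus exceptional condition — this last point is the technical heart and will likely require a careful application of Fubini on $U$ combined with the area formula (Theorem~\ref{thm:classical-area-formula}) and the fact, recorded in the proof of Theorem~\ref{thm:unif}, that $N(x,u,U) = 1$ for a.e.\ $x \in u(U)$, so that crossing parameters and $\hm^2_X$-a.e.\ points of $A$ are matched up correctly.
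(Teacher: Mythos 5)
Your first step (collecting the zero-modulus exceptional families for the lower gradient inequality, absolute continuity, and integrability) matches the paper and is fine; after it, conditions \ref{item-1:good-path} and \ref{item-2:good-path} hold for every curve outside a single family $\Gamma_0$ with $\mod\Gamma_0=0$. The gap is in your second step, and it is the structural obstacle you yourself flag rather than a removable technicality. Any foliation obtained by pushing forward Euclidean segments under $u$ meets $A$ only in arclength-null sets of parameters: since $A\subset X_0=u(G_0)$ and $|G_0|_2=0$, for almost every segment the set of parameters mapped into $A$ has one-dimensional measure zero, and this persists after reparametrizing by arclength. Consequently the statements you want to exploit --- $(f\circ\gamma)'(s)\geq\rho_f^l(\gamma(s))$ for \emph{almost every} $s$, or ``$t$ is a Lebesgue point of $s\mapsto\rho_f^l(\gamma(s))$'' --- give no information at the crossing parameters $t$ with $\gamma(t)\in A$, which lie precisely in the exceptional null set of each curve. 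Nor can the coarea inequality of \cite{MN23} convert ``bad crossing parameters have measure zero on each curve'' into ``$\hm^2_X$-almost every point of $A$ is a good crossing point'': that inequality controls integrals over $X'$, whereas $A$ sits in $X_0$, the set on which the area formula degenerates. So the proposed Fubini/coarea machinery cannot see the set it is supposed to exhaust. (A smaller issue: a lower bound on $\ell(f\circ\gamma|_{[s_0,s_0+h]})$ does not by itself bound the difference quotient $|f(\gamma(s_0+h))-f(\gamma(s_0))|/h$ from below, since the image curve may backtrack; one must pass through the a.e.\ identity between length and the integral of the metric derivative, which again only yields an a.e.-in-$s$ statement.)

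The paper avoids constructing curves through points of $A$ altogether. It argues by contradiction using minimality of the weak upper gradient: if there were $A_0\subset A$ of positive measure such that no curve in $\Gamma'=\Gamma\setminus(\Gamma_0\cup\Gamma_1)$ satisfies \ref{item-3:good-path} at any point of $A_0$, then for every $\gamma\in\Gamma'$ one has $(f\circ\gamma)'(t)=0$ at a.e.\ parameter $t$ with $\gamma(t)\in A_0$, whence
\begin{equation*}
\ell(f\circ\gamma)=\int_0^{\ell(\gamma)}\chi_{X\setminus A_0}(\gamma(t))\,(f\circ\gamma)'(t)\,dt\leq\int_\gamma\chi_{X\setminus A_0}\,\rho_f^u\,ds .
\end{equation*}
Thus $\chi_{X\setminus A_0}\,\rho_f^u$ is a weak upper gradient of $f$, forcing $\rho_f^u=0$ a.e.\ on $A_0$ by minimality and contradicting $\rho_f^u\geq\rho_f^l>0$ on $A$. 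You would need to replace the core of your argument with an idea of this kind (or otherwise circumvent the fact that $A$ is invisible both to the Euclidean parametrization and to almost-every-parameter statements along curves) before the proof can be completed.
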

\begin{proof} 
    Denote by $\Gamma$ the family of all compact rectifiable curves in $X$ and by $\Gamma_0$ the family of curves $\gamma\in\Gamma$ such that either $\rho_f^u$ is not integrable on $\gamma$ or the upper gradient inequality \eqref{ineq:upper_gradient} does not hold for the pair $(f,\rho_f^u)$ along $\gamma$. Note that $f$ is absolutely continuous along every $\gamma\in\Gamma\setminus\Gamma_0$ and $\mod\Gamma_0=0$, see \cite{HKST:15}*{Propositions 6.3.1 and 6.3.2}. As $0\leq\rho_f^l(x)\leq\rho_f^u(x)$ for almost every $x\in X$, we have that $\rho_f^l$ is integrable on every $\gamma\in\Gamma\setminus\Gamma_0$. Let $\Gamma_1$ be the family of curves $\gamma\in\Gamma\setminus\Gamma_0$ such that the lower gradient inequality \eqref{ineq:lower_gradient} does not hold for the pair $(f,\rho_f^l)$ along $\gamma$. As $f$ is absolutely continuous along every $\gamma\in\Gamma_1$, we have by definition of $\rho_f^l$ that $\mod(\Gamma_1)=0$. 

    Now the claim follows if for almost every $x \in A$ there is a $\gamma_x\colon[0,\ell(\gamma_x)]\to X$ in $\Gamma'=\Gamma\setminus(\Gamma_0\cup\Gamma_1)$ parametrized by arclength and satisfying (iii). 

    Suppose towards contradiction that there is a set $A_0 \subset A$ with positive measure so that, given $x \in A_0$, no $\gamma=\gamma_x \in \Gamma'$ satisfies (iii). Recall that if $\gamma \in \Gamma'$ then $f\circ \gamma$ is differentiable at almost every $0<t<\ell(\gamma)$ (see e.g.\ \cite{HKST:15}*{Remark~4.4.10}). But then, by the definition of the line integral, every $\gamma \in \Gamma'$ satisfies 
        \begin{align*}
        \int_{f\circ\gamma}\chi_{f(A_0)}\,ds=\int_0^{\ell(\gamma)}\chi_{A_0}(\gamma(t))\cdot (f\circ\gamma)'(t)\,dt=0
    \end{align*}
    and therefore
    $$\ell(f\circ\gamma)=\int_{f\circ\gamma}\chi_{f(X\setminus A_0)}\,ds.$$
    Moreover, upper gradient inequality \eqref{ineq:upper_gradient} implies
    $$\int_{f\circ\gamma}\chi_{f(X\setminus A_0)}\,ds\leq\int_\gamma\chi_{X\setminus A_0}\cdot\rho_f^u\,ds.$$
    In particular, $\chi_{X\setminus A_0}\cdot\rho_f^u$ is a weak upper gradient of $f$. From minimality of $\rho_f^u$ we conclude that $\rho_f^u(x)=0$ for almost every $x \in A_0$, which is a contradiction. The proof is complete. 
\end{proof}

Recall that $f$ is discrete and open by Theorem \ref{thm:main-MR23}, so that $f$ is locally invertible outside a countable \emph{branch set}  $\mathcal{B}_f$. We denote $A'=A'' \setminus \mathcal{B}_f$. 

\begin{corollary}\label{cor:length-f(gamma-x)} Fix $x \in A'$, $\gamma_x$, and $0<t<\ell(\gamma_x)$ as in Lemma \ref{lem:good-path}. There are $0<\delta_x,\varepsilon_x<1$ such that if $0<R\leq\delta_x$ and 
$\gamma_R:=\gamma_x|_{[t-R,t+R]}$, then $\ell(\gamma_R)=2R$ and  
\begin{eqnarray*}
diam(|f\circ\gamma_R|)\geq \varepsilon_x R.  
\end{eqnarray*} 
 Moreover, $|\gamma_R|$ has a neighborhood $W$ so that the restriction of $f$ to $W$ is a homeomorphism onto $B(f(x),10(R+\diam(|f \circ \gamma_R|))$. 
\end{corollary}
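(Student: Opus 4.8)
The plan is to extract the two constants $\delta_x$ and $\varepsilon_x$ directly from property \ref{item-3:good-path} of Lemma \ref{lem:good-path}, namely from the differentiability of $f \circ \gamma_x$ at $t$ with $(f\circ\gamma_x)'(t) =: c_x > 0$. By definition of the derivative of the curve $f\circ\gamma_x$ at $t$, there is $\delta_x \in (0,1)$, small enough that $[t-\delta_x,t+\delta_x] \subset (0,\ell(\gamma_x))$, such that for all $s$ with $0 < |s-t| \leq \delta_x$ we have
\[
\bigl| |f(\gamma_x(t)) - f(\gamma_x(s))| - c_x |s-t| \bigr| \leq \tfrac{1}{2} c_x |s-t|.
\]
In particular $|f(\gamma_x(t)) - f(\gamma_x(t+R))| \geq \tfrac{1}{2} c_x R$ for $0 < R \leq \delta_x$, so setting $\varepsilon_x := c_x/2$ gives $\diam(|f\circ\gamma_R|) \geq \tfrac{1}{2} c_x R = \varepsilon_x R$, since $f(\gamma_x(t))$ and $f(\gamma_x(t+R))$ both lie on $|f\circ\gamma_R|$. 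The identity $\ell(\gamma_R) = 2R$ is immediate because $\gamma_x$ is parametrized by arclength and $[t-R,t+R]$ has length $2R$ (here I may need to shrink $\delta_x$ once more to ensure $t - \delta_x \ge 0$ and $t+\delta_x \le \ell(\gamma_x)$, which is possible since $0 < t < \ell(\gamma_x)$).

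For the final assertion, recall that $x \in A' = A'' \setminus \mathcal{B}_f$, so $f$ is locally invertible at $x$; that is, there is an open neighborhood $V$ of $x$ on which $f$ is a homeomorphism onto an open subset of $\R^2$. Since $f$ is continuous (Proposition \ref{prop:sense-pres}) and $|\gamma_R| \to \{x\}$ in the Hausdorff sense as $R \to 0$, while also $\diam(|f\circ\gamma_R|) \to 0$, I can further shrink $\delta_x$ so that for every $R \le \delta_x$ the compact set $|\gamma_R|$ is contained in $V$ and moreover $f(|\gamma_R|) \subset B(f(x), \rho)$ for a radius $\rho$ with $\overline{B}(f(x),\rho) \subset f(V)$. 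Then $W := V \cap f^{-1}(B(f(x), 10(R + \diam(|f\circ\gamma_R|))))$ is an open neighborhood of $|\gamma_R|$ on which $f$ restricts to a homeomorphism onto the ball $B(f(x), 10(R + \diam(|f\circ\gamma_R|)))$, provided this ball is contained in $f(V)$; I arrange the latter by requiring $10(R + \diam(|f\circ\gamma_R|)) \le 10(\delta_x + \diam(|f\circ\gamma_{\delta_x}|)) \le \rho$, which is again achieved by shrinking $\delta_x$ (note $\diam(|f\circ\gamma_R|)$ is monotone in $R$ up to the continuity bound, or one simply bounds it by the oscillation of $f$ on $V$).

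The step I expect to require the most care is the last one: making sure that a single choice of $\delta_x$ simultaneously (a) keeps $|\gamma_R|$ inside the local-injectivity neighborhood $V$, (b) forces the target ball $B(f(x), 10(R+\diam(|f\circ\gamma_R|)))$ to lie inside $f(V)$ so that $f^{-1}$ of it makes sense as a homeomorphic preimage, and (c) is compatible with the lower bound $\diam(|f\circ\gamma_R|) \ge \varepsilon_x R$ derived above. All three are soft consequences of continuity of $f$ and $f^{-1}$ on $V$ together with $|\gamma_R| \to \{x\}$, so no quantitative estimate is needed — one just takes $\delta_x$ to be the minimum of the finitely many thresholds produced above. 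The monotonicity/uniformity needed for the radius $10(R+\diam(|f\circ\gamma_R|))$ is handled by replacing $\diam(|f\circ\gamma_R|)$ with the crude upper bound $\operatorname{osc}_V f$ wherever an upper bound is convenient, while keeping the sharp lower bound $\varepsilon_x R$ wherever a lower bound is needed.
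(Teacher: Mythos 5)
Your proof is correct and follows essentially the same route as the paper's: set $\varepsilon_x$ to half the derivative $(f\circ\gamma_x)'(t)$, get the diameter bound from the difference quotient, read off $\ell(\gamma_R)=2R$ from the arclength parametrization, and shrink $\delta_x$ using local invertibility at $x\notin\mathcal{B}_f$ for the final claim. The paper dispatches the last claim in one sentence, whereas you spell out the construction of $W=V\cap f^{-1}(B(f(x),10(R+\diam(|f\circ\gamma_R|))))$; your one-sided estimate $|f(\gamma_x(t))-f(\gamma_x(t+R))|\geq\varepsilon_x R$ is a harmless (arguably safer) variant of the paper's two-sided one.
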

\begin{proof}
     We set $$\varepsilon_x:=\frac{(f\circ\gamma_x)'(t)}{2}>0.$$ By definition of the metric derivative, we find $0<\delta<t$ such that 
    \begin{align*}
        \frac{d(f(\gamma_x(t-R)),f(\gamma_x(t+R)))}{2R}\geq(f\circ\gamma_x)'(t)-\varepsilon_x=\varepsilon_x 
         \end{align*}
    for every $0<R\leq\delta$. In particular, 
    \begin{align*}
        \diam(|f\circ\gamma_R|)\geq d(f(\gamma_x(t-R)),f(\gamma_x(t+R)))\geq 2\varepsilon_xR
    \end{align*}
    and, as $\gamma_x$ is parametrized by arclength, $\ell(\gamma_R)=2R$. By local invertibility of $f$ at $x$, we may choose $\delta$ to be smaller if necessary so that the last claim also holds. 
\end{proof}

As $\rho_f^l(x)>0$ for almost every $x\in A'$ and $\hm^2(A')>0$, there is  $\varepsilon>0$ such that $\hm^2(A_\varepsilon)>0$ for
$$A_\varepsilon=\{x\in A':\varepsilon_x\geq\varepsilon\}.$$

\begin{proposition}\label{prop:limit-measure}
    We have $J_f(x)>0$ for almost every $x \in A_\varepsilon$. 
\end{proposition}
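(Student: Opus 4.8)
The plan is to deduce the statement from a \emph{no-collapse} property: for $\hm^2_X$-a.e.\ $x\in A_\varepsilon$ there should exist $C_x\ge1$ and $s_x>0$ such that, writing $\sigma_x>0$ for a radius on which $f|_{B(x,\sigma_x)}$ is a homeomorphism (which exists since $A_\varepsilon\subset X\setminus\mathcal B_f$),
\[
\bigl(f|_{B(x,\sigma_x)}\bigr)^{-1}\!\bigl(B(f(x),s)\bigr)\subset\overline B(x,C_xs)\qquad\text{for all }0<s<s_x .
\]
Granting this, $f(\overline B(x,C_xs))\supset B(f(x),s)$, hence $|f(\overline B(x,C_xs))|_2\ge\pi s^2$ and therefore $J_f(x)\ge\limsup_{s\to0}\frac{|f(\overline B(x,C_xs))|_2}{\pi(C_xs)^2}\ge C_x^{-2}>0$. (Together with Lemma~\ref{lem:J_f=0} this yields the contradiction completing Proposition~\ref{prop:J_f=0-implies-lower-grad-0}.)

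First I would pass to a good subset of $A_\varepsilon$. Applying Lemma~\ref{lem:max-fct} to $K_f\in\Lloc^1(X)$ and to $(\rho_f^u)^2\in\Lloc^1(X)$, and intersecting with the full-measure sets provided by Theorem~\ref{thm:regular} and with $\{x:\varepsilon_x\ge\varepsilon\}$, one obtains $L\ge1$ and $B\subset A_\varepsilon$ with $\hm^2_X(A_\varepsilon\setminus B)>0$ such that every $x\in B$ satisfies $\mathcal{M}g(x)\le L$ for $g\in\{K_f,(\rho_f^u)^2\}$, $\limsup_{r\to0}\hm^2(\overline B(x,r))/(\pi r^2)\le1$, $\varepsilon_x\ge\varepsilon$, and $f$ is locally invertible at $x$. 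Since maximal functions of $\Lloc^1$-functions are finite a.e., iterating Lemma~\ref{lem:max-fct} exhausts $A_\varepsilon$ up to a null set, so it suffices to establish the no-collapse property for every $x\in B$.

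Fix $x\in B$. For small $0<R\le\delta_x$, Corollary~\ref{cor:length-f(gamma-x)} furnishes a subcurve $\gamma_R\subset\overline B(x,R)$ with $\diam(|f\circ\gamma_R|)\ge\varepsilon R$ and an open set $W=W_R\supset|\gamma_R|$ with $f|_W$ a homeomorphism onto $B(f(x),\rho_R)$, $\rho_R=10(R+\diam(|f\circ\gamma_R|))\ge10\varepsilon R$; arranging $W_R=(f|_{B(x,\sigma_x)})^{-1}(B(f(x),\rho_R))$ we have $\Omega_R:=(f|_{W_R})^{-1}(B(f(x),\rho_R/2))=(f|_{B(x,\sigma_x)})^{-1}(B(f(x),\rho_R/2))$, a connected neighbourhood of $x$ with $|f(\Omega_R)|_2=\pi(\rho_R/2)^2$. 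Since $\rho_R\le10(R+\ell(f\circ\gamma_R))\to0$ as $R\to0$ (using absolute continuity of $f$ along $\gamma_x$), the sets $\Omega_R$ shrink to $\{x\}$; consequently the no-collapse property is equivalent to the uniform bound $\diam_X\Omega_R\le C_x\rho_R$. Establishing this uniform bound is the heart of the argument and the main obstacle. The plan here is a modulus comparison: if some $w\in\Omega_R$ had $d(x,w)=T\gg\rho_R$, then on the one hand, transporting the extremal function of the round-annulus family $\{\sigma\subset B(f(x),\rho_R/2):\sigma\text{ joins }\partial B(f(x),\rho_R/4)\text{ to }\partial B(f(x),\rho_R/2)\}$ back to $X$ through $f|_W$ bounds the modulus of the corresponding family in $W_R$ from above, via the area inequality (Theorem~\ref{thm:area-ineq}), the identity $(\rho_f^u)^2=K_f\,\rho_f^u\rho_f^l$ on $X'$, the vanishing $|f(X_0)|_2=0$, and the bounds $\mathcal{M}K_f(x)\le L$, $\mathcal{M}((\rho_f^u)^2)(x)\le L$ together with the density estimate $\hm^2(\overline B(x,r))\le2\pi r^2$ for small $r$; on the other hand, the weak quasiconformality of the parametrization $u$ (inequality~\eqref{ineq:wqc}) bounds the same modulus from below in terms of a Euclidean annular modulus that grows with $T/\rho_R$ once $\Omega_R$ is long relative to its image-ball scale. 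The presence of the curve $\gamma_R$, whose $f$-image occupies a \emph{definite} fraction $\ge\varepsilon R$ of the image ball $B(f(x),\rho_R)$, is what makes this comparison effective. Reconciling the two bounds forces $T\le C_x\rho_R$.

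The delicate point throughout the previous paragraph — and the reason both $K_f\in\Lloc^1(X)$ and the uniform stretching $\varepsilon_x\ge\varepsilon$ are indispensable — is that the modulus comparison must produce a \emph{linear} control $\diam_X\Omega_R\le C_x\rho_R$, not merely a Hölder one; a sublinear bound would only give $\limsup_{s\to0}|f(\overline B(x,C_x s^{\alpha}))|_2/(\pi s^{2\alpha})\to0$ and would fail to yield $J_f(x)>0$. I expect the bookkeeping needed to extract this linear rate from the $L^1$-integrability of $K_f$ (made effective at $x$ by the maximal-function bounds) to be the technically hardest part; note that at a generic point of $X_0$ no such bound can hold, consistently with Lemma~\ref{lem:J_f=0}, so the good path is genuinely used. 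Once $\diam_X\Omega_R\le C_x\rho_R$ is in hand, it is the no-collapse property with $s=\rho_R/2$, and the reduction of the first paragraph finishes the proof.
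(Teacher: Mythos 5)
Your reduction and your closing computation are fine, but the statement you reduce to --- the linear no-collapse bound $(f|_{B(x,\sigma_x)})^{-1}(B(f(x),s))\subset\overline B(x,C_xs)$, equivalently $\diam_X\Omega_R\le C_x\rho_R$ --- is left unproved, and it is both strictly stronger than what is needed and stronger than what the paper establishes. The paper never controls the \emph{whole} preimage of a small image ball. Instead it slices the image ball by the level sets of $v=\pi_2\circ f$: for each height $t\in(0,\varepsilon R)$ there is a fiber continuum $\eta_t'$ meeting $|\gamma_R|$ whose image is a full segment $I_t$ of length $2R$, and Lemma~\ref{lem:E_M} shows only that the set $E_M(R)$ of heights whose fiber escapes $B(x,MR)$ has measure at most $\kappa R/l$ (with $M=2^l$), hence at most $\varepsilon R/2$ for $l$ large. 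A small but positive-measure set of fibers may still wander arbitrarily far, so $\diam_X\Omega_R$ need not be $O(\rho_R)$; the lower bound $|f(B(x,MR))|_2\ge 2R\,|Q_M(R)|_1\ge\varepsilon R^2$ is then extracted by Fubini from the \emph{majority} of fibers only, which already gives $J_f(x)\ge\varepsilon/(\pi M^2)>0$. In short, the weaker ``most fibers stay put'' statement suffices, and that is what is actually reachable from $K_f\in\Lloc^1(X)$.

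Moreover, the modulus comparison you sketch to obtain the linear bound has a concrete obstruction at the lower-bound end. The only modulus inequality available on a general metric surface is \eqref{ineq:wqc}, which bounds a modulus in $X$ from below by the Euclidean modulus of the corresponding family in $U$; but your point $x$ lies in $X_0=u(G_0)$, exactly where $u$ collapses, so $u^{-1}$ of your annular configuration has no controlled Euclidean shape and there is no reason its modulus grows like $\log(T/\rho_R)$. Metric surfaces carry no Loewner-type lower modulus bound, which is precisely why the paper replaces the ring-domain argument by coarea slicing together with the logarithmic test function $\phi$ on dyadic annuli, the maximal-function bound $\mathcal MK_f(x)\le L$, and the self-improving estimate $\mathcal I_2\le 8\sqrt2\,R\,|E_M(R)|_1$ from the area inequality, yielding the quadratic inequality that forces $|E_M(R)|_1\le\kappa R/l$. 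To complete your argument you would need to abandon the no-collapse bound and prove a measure estimate on the bad levels instead.
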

Proposition \ref{prop:limit-measure} contradicts Lemma \ref{lem:J_f=0}, so Proposition \ref{prop:J_f=0-implies-lower-grad-0}, and thus Theorem \ref{thm:main}, follow once we have proved Proposition \ref{prop:limit-measure}. 

To start the proof of Proposition \ref{prop:limit-measure} we fix $x\in A_\varepsilon \setminus E$, where $E$ is the null set in Theorem \ref{thm:regular}. Then there is $r_x>0$ so that
\begin{equation} \label{eq:hubound}
\hm^2(B(x,r)) \leq 4 r^2 \quad \text{for all } 0< r < r_x. 
\end{equation}

Let $\delta_x,\varepsilon_x>0$ be as in Corollary \ref{cor:length-f(gamma-x)}. We fix a large number $M$ to be specified later, and let $R>0$ be small enough so that 
\begin{equation}\label{mat}
5MR<\min\{r_x,\delta_x\}. 
\end{equation} 
Consider the curve $\gamma_R$ in Corollary \ref{cor:length-f(gamma-x)}. We have  
$$ 
x\in|\gamma_R|\subset B(x,R) \quad \text{and} \quad \diam(f(|\gamma_R|))\geq\varepsilon R. 
$$ 
Without loss of generality we assume that the points $(0,0)$ and $(0,\varepsilon R)$ are contained in $f(|\gamma_R|)$. Let $\pi_2\colon\R^2\to\R$ be the projection to the second coordinate. Then $v=\pi_2\circ f\in\Nloc(X,\R)$. 

Recall that $f$ is invertible in a neighborhood $W$ of $x$ with image 
$$ 
f(W)=B(f(x),10(R+\diam(|f \circ \gamma_R|)). 
$$ 
In particular, for every 
$0<t<\varepsilon R$ there are $s_t$ so that $(s_t,t) \in f(|\gamma_R|)$ and continuum $\eta'_t \subset v^{-1}(t)$ so that 
$$
\eta_t' \cap |\gamma_R| \neq \emptyset \quad \text{and} \quad f(\eta_t')=
I_t:=[s_t-R,s_t+R] \times \{t\}. 
$$ 
We fix $a_t \in \eta_t' \cap |\gamma_R| \subset B(x,R)$ and define
$$
E_M(R)=\{0<t<\varepsilon R:\eta'_t\not\subset B(x,MR)\}.
$$ 
We may choose continua $\eta_t'$ so that $E_M(R)$ is a Borel set. 

\begin{lemma}\label{lem:E_M}
For almost every $x \in A_\varepsilon$ we can choose $M$ (depending on $x$) so that  
$$
|E_M(R)|_1\leq\frac{\varepsilon R}{2}
$$ 
for all $R>0$ satisfying \eqref{mat}. 
\end{lemma}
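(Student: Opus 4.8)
The quantity $E_M(R)$ collects those levels $t$ for which the continuum $\eta_t'$ escapes the ball $B(x,MR)$. Each such $\eta_t'$ is connected, contains the point $a_t \in B(x,R)$, and leaves $B(x,MR)$, hence $\diam(\eta_t') \geq (M-1)R$. Since $f(\eta_t') = I_t$ has length exactly $2R$, the map $f$ must be "very compressing" along $\eta_t'$, and the idea is to convert this into a lower bound on $\int_{\eta_t'} \rho_f^l \, ds$ — or rather on the amount of modulus forced by these curves — that becomes inconsistent with $K_f \in L^1_{\text{loc}}$ once $M$ is large.

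\textbf{Key steps.} First I would fix a generic $x$ (lying outside $E$, outside the null sets from Lemma \ref{lem:good-path} and Corollary \ref{cor:length-f(gamma-x)}, and, crucially, at a point where the maximal function $\mathcal{M}K_f(x)$ is finite — this is where Lemma \ref{lem:max-fct} enters, so the exceptional set of bad $x$ has the asserted measure and $M$ may depend on $x$ through $L = \mathcal{M}K_f(x)$ and $\varepsilon$). Second, for $t \in E_M(R)$ I would produce from $\eta_t'$ a rectifiable subcurve $\sigma_t \subset B(x,MR) \setminus B(x,2R)$ (say) joining the sphere of radius $2R$ to the sphere of radius $MR$; these $\sigma_t$ lie in an annulus, they are pairwise "disjoint in the target" since the $I_t$ are disjoint horizontal segments, and $f$ restricted to each $\sigma_t$ has image of diameter at most $2R$. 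Third, I would estimate: by the lower gradient inequality (valid for a.e.\ curve, so after discarding a modulus-zero family we may assume it on the relevant subcurves, using the monotone/homeomorphic structure of $f$ on $W$), $\ell(f \circ \sigma_t) \geq \int_{\sigma_t} \rho_f^l \, ds$, but also $\ell(f\circ\sigma_t)$ is controlled by $\diam I_t + (\text{the transversal spread})$, which for the "escaping" levels is small compared to the Euclidean length the curve $\sigma_t$ must have in $X$. Comparing $\rho_f^l$ and $\rho_f^u$ through $K_f$ and integrating over $t \in E_M(R)$, a Fubini/coarea-type argument over the family $\{\sigma_t : t \in E_M(R)\}$ together with the area inequality (Theorem \ref{thm:area-ineq}) and the bound \eqref{eq:hubound} on $\hm^2(B(x,r))$ will yield
$$
|E_M(R)|_1 \leq \frac{C(\varepsilon)}{M} \int_{B(x,5MR)} K_f \, d\hm^2_X \cdot \frac{1}{(MR)^2} \cdot (MR)^2 \leq \frac{C(\varepsilon)}{M}\, \mathcal{M}K_f(x) \cdot (\text{something bounded}),
$$
and choosing $M = M(x)$ large enough (depending on $\varepsilon$ and $L=\mathcal{M}K_f(x)$) forces the right-hand side below $\varepsilon R / 2$, uniformly in $R$ satisfying \eqref{mat}.

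\textbf{Main obstacle.} The delicate point is the third step: organizing the continua $\eta_t'$ (which a priori are just connected sets, not curves, and need not be rectifiable) into a genuine curve family on which the lower gradient inequality and a modulus/coarea estimate can be run. I expect one must (i) use that $f$ is a homeomorphism on the neighborhood $W$ of $|\gamma_R|$ to pull back the nice foliation $\{I_t\}$ of a rectangle in $\R^2$ and obtain honest rectifiable curves $\eta_t$, (ii) relate the $1$-dimensional measure of $E_M(R)$ to a modulus or to $\int K_f$ via the area inequality applied to $v = \pi_2 \circ f$ or to $f$ itself on the annulus, being careful that the "escaping" condition genuinely costs a definite amount of $\rho_f^l$-length in a region of controlled $\hm^2$-measure. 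Balancing these — escaping distance $(M-1)R$ in $X$ against target length $2R$, converted through $K_f$ and then averaged — is exactly what makes the factor $1/M$ appear and is the crux of the lemma.
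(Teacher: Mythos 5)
Your setup is the right one (restrict to points where $\mathcal{M}K_f(x)\leq L$ via Lemma \ref{lem:max-fct}, note that an escaping $\eta_t'$ must travel distance $\sim MR$ while its image $I_t$ has length only $2R$, and win by taking $M$ large), but the central analytic step is left unresolved and the route you sketch for it would not go through as stated. First, the $\eta_t'$ are merely continua inside level sets of $v=\pi_2\circ f$; they are not a priori rectifiable curves, so the lower gradient inequality $\ell(f\circ\sigma_t)\geq\int_{\sigma_t}\rho_f^l\,ds$ is not available on them, and even if one could extract rectifiable subcurves, one cannot simply ``discard a modulus-zero family'' and still retain the specific uncountable family $\{\sigma_t:t\in E_M(R)\}$ without an argument showing that family has positive modulus --- which is precisely the kind of quantitative input one is trying to produce. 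The paper sidesteps curves entirely: it works with $\hm^1$ restricted to the continua $\eta_t$ and invokes the Sobolev coarea inequality of Meier--Ntalampekos for $v$, which bounds $\int_{E_M(R)}\int_{\eta_t}\phi\,d\hm^1\,dt$ by $\tfrac{4}{\pi}\int\phi\,\rho_f^u\,d\hm^2$. Note this produces the \emph{upper} gradient, not $\rho_f^l$; the lower gradient enters only afterwards, through the Cauchy--Schwarz splitting $\rho_f^u=K_f^{1/2}(\rho_f^u\rho_f^l)^{1/2}$, with the $K_f$-factor absorbed by the maximal function and the $\rho_f^u\rho_f^l$-factor by the area inequality, the latter yielding the crucial factor $R\,|E_M(R)|_1$ that lets one absorb $|E_M(R)|_1^{1/2}$ back into the left-hand side. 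Your proposal has the area inequality and $K_f$ appearing, but not organized so that $|E_M(R)|_1$ closes up.

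Second, the decay rate you claim, $C(\varepsilon)/M$, is not what the argument yields and is not justified by your heuristic: the metric surface carries only the \emph{upper} area bound \eqref{eq:hubound}, no lower bound on annuli, so a naive length--area comparison in $B(x,MR)\setminus B(x,2R)$ does not produce a gain linear in $M$. The paper obtains only $\kappa L/\log_2 M$, via the logarithmic test function $\phi=\chi_{B(x,MR)\setminus B(x,R)}/(l\,d(\cdot,x))$ on dyadic annuli: each escaping continuum crosses all $l$ annuli and hence satisfies $\int_{\eta_t}\phi\,d\hm^1\geq 1/2$, while $\int\phi^2K_f\,d\hm^2\lesssim L/l$ by the maximal function bound summed over the dyadic scales. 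The logarithmic gain suffices, since $M$ is allowed to depend on $x$ through $L$ and $\varepsilon$; but identifying this test function, and the coarea inequality as the mechanism converting the one-dimensional escape information into a two-dimensional integral, is exactly the content you flag as the ``main obstacle'' and do not supply.
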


\begin{proof}
    We may assume that $|E_M(R)|_1>0$ since otherwise there is nothing to show. We may also assume that $M=2^l$ for some $l\in\N$. Define $\phi\colon X\to\R$ by
    $$
    \phi(y)=\frac{\chi_{B(x,MR)\setminus B(x,R)}(y)}{l\cdot d(y,x)}. 
    $$
    Let $\eta_t$ be the $a_t$-component of $\eta'_t\cap B(x,MR)$, and 
    $$
    \eta_t^j=\eta_t\cap B(x,2^jR)\setminus B(x,2^{j-1}R). 
    $$ 
    If $t\in E_M(R)$, then
    \begin{align}
      \nonumber  \int_{\eta_t}\phi\,d\hm^1&=\sum_{j=1}^l\int_{\eta_t^j}\phi\,d\hm^1\geq\frac{1}{l}\sum_{j=1}^l\hm^1(\eta_t^j)\,\min_{y\in\eta_t^j}\frac{1}{d(y,x)}\\ \label{logar}
        &\geq\frac{1}{l}\sum_{j=1}^l(2^{j-1}R)\cdot\frac{1}{2^jR}\geq\frac{1}{2}.
    \end{align}
    Note that each $\eta_t$ is a non-degenerate continuum contained in the level set $v^{-1}(t)$. Hence, by \cite{MR23}*{Lemma~4.4}, $\hm^1(\eta_t\cap X_0)=0$ for almost every $t\in E_M(R)$. Let 
    $$
    F_M=\bigcup_{t\in E_M(R)}\eta_t\subset B(x,MR).  
    $$ 
    We apply \eqref{logar}, the coarea inequality for Sobolev mappings \cite{MN23}*{Theorem 1.6}, and Hölder's inequality to obtain 
    \begin{align} 
    \begin{split} \label{eq:lem-1}
        \frac{|E_M(R)|_1}{2}&\leq\int_{E_M(R)}\int_{\eta_t}\phi\,d\hm^1\,ds=\int_{E_M(R)}\int_{\eta_t\cap X'}\phi\,d\hm^1\,ds\\ 
        &\leq\frac{4}{\pi}\int_{F_M\cap X'}\phi\,\rho_f^u\,d\hm^2\leq\frac{4}{\pi}\int_{F_M\cap X'}\phi\,K_f^{1/2}(\rho_f^u\rho_f^l)^{1/2}\,d\hm^2\\
        &\leq\frac{4}{\pi}\biggl(\underbrace{\int_{B(x,MR)\setminus B(x,R)}\phi^2K_f\,d\hm^2}_{=:\mathcal{I}_1}\biggr)^{1/2}\biggl(\underbrace{\int_{F_M\cap X'}\rho_f^u\rho_f^l\,d\hm^2}_{=:\mathcal{I}_2}\biggr)^{1/2}. 
    \end{split}
    \end{align}
    Recall that $K_f\in\Lloc^1(X)$ and therefore, by Lemma \ref{lem:max-fct}, for almost every $x \in A_\varepsilon \setminus E$ there exists $L\geq1$ such that the maximal function satisfies $\mathcal{M}K_f(x)\leq L$. Combining with \eqref{eq:hubound}, we obtain 
    \begin{align}
        \begin{split}\label{eq:lem-2}
            \mathcal{I}_1&\leq\frac{4}{l^2R^2}\sum_{j=1}^l2^{-2j}\int_{B(x,2^jR)\setminus B(x,2^{j-1}R)}K_f\,d\hm^2\\
            &\leq\frac{4}{l^2R^2}\sum_{j=1}^l2^{-2j}\frac{\hm^2(B(x,5\cdot2^jR))}{\hm^2(B(x,5\cdot 2^jR))}\int_{B(x,2^jR)}K_f\,d\hm^2\\
            &\leq\frac{400}{l^2}\sum_{j=1}^l\frac{1}{\hm^2(B(x,5\cdot 2^jR))}\int_{B(x,2^jR)}K_f\,d\hm^2\\
            &\leq\frac{400Ll}{l^2}=\frac{400L}{l}. 
        \end{split}
    \end{align}
    We may apply the area inequality (Theorem \ref{thm:area-ineq}) and Fubini's theorem to compute $\mathcal{I}_2$ as follows: 
    \begin{align}
        \begin{split}\label{eq:lem-3}
            \mathcal{I}_2&\leq4\sqrt{2}\int_{f(F_M)}1\,dA\leq8\sqrt{2}R\,|E_M(R)|_1,
        \end{split}
    \end{align}
    where the last inequality holds as $f(\eta_t)\subset I_t$, $I_t$ is a segment of length $2R$ for every $t\in E_M(R)$, and $f(F_M)=\bigcup_{t\in E_M(R)}f(\eta_t)$. Combining \eqref{eq:lem-1}, \eqref{eq:lem-2} and \eqref{eq:lem-3} gives
    \begin{align*}
        \frac{|E_M(R)|_1}{2}\leq\left(3200\sqrt{2}L\,\frac{R}{\ell}\,|E_M(R)|_1\right)^{1/2}.
    \end{align*}
    After setting $\kappa=50000L$ we obtain
    $$|E_M(R)|_1\leq\kappa\frac{R}{l}, $$
    and thus $|E_M(R)|_1\leq\frac{\varepsilon R}{2}$ for $l$ large enough.
\end{proof}

We now finish the proof of Proposition \ref{prop:limit-measure}. Choose 
$x \in A_\varepsilon$, $M$ and $R$ so that Lemma \ref{lem:E_M} holds. We denote $Q_M(R)=(0,\varepsilon R) \setminus E_M(R)$. By Lemma \ref{lem:E_M}, 
$$
|Q_M(R)|_1 \geq \frac{\varepsilon R}{2}. 
$$
Moreover, local invertibility of $f$ around $x$ and the definition of $Q_M(R)$ show that if $t \in Q_M(R)$ then $f(\eta_t')=I_t$, so that $|f(\eta_t')|_1=2R$. We denote 
$$
G_M(R)=\bigcup_{t\in Q_M(R)}f(\eta_t'). 
$$ 
Note that by definition, $G_M(R)\subset f(B(x,MR))$. Fubini's theorem now gives 
$$ 
\varepsilon R^2 \leq 2R \cdot |Q_M(R)|_1= |G_M(R)|_2 \leq |f(B(x,MR))|_2. 
$$
Proposition \ref{prop:limit-measure} follows by letting $R\to 0$. The proof of Theorem \ref{thm:main} is complete. 

\section{Quasiconformal uniformization}\label{sec:reciprocal}

This section is devoted to proving Corollary \ref{cor:qc-equivalence} and Theorems \ref{thm:reciprocal} and \ref{thm:factorization}. Before proving Corollary \ref{cor:qc-equivalence}, we recall the following theorem of Williams (\cite{Wil:12}*{Theorem 1.1}).
\begin{thm}\label{thm:williams}
Let $f\colon X\to Y$ a homeomorphism between metric surfaces. Then the following conditions are equivalent with the same constant $C\geq1$.
\begin{enumerate}[label=\normalfont(\roman*)]
    \item \label{williams:ana} $f\in\Nloc(X,Y)$ and 
    \begin{align*}
        \rho_f^u(x)^2\leq C\cdot\jac_f(x)\quad\text{for almost every }x\in X,
    \end{align*}
    where $\jac_f$ denotes the Radon-Nikodym derivative of the corresponding pull-back measure with respect to $\hm^2_X$.
    \item For every family $\Gamma$ of curves in $X$ we have
    \begin{align}\label{williams:geo}
        \mod \Gamma \leq C\cdot\mod f(\Gamma).
    \end{align}
\end{enumerate}
\end{thm}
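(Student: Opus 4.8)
I would prove that the analytic condition \ref{williams:ana} and the modulus inequality \eqref{williams:geo} each imply the other, treating the two implications separately, since they are of quite different character. \emph{From the analytic condition to the modulus inequality} is the soft direction. Fix a curve family $\Gamma$ in $X$ and a function $g$ admissible for $f(\Gamma)$; the plan is to check that $\tilde g:=(g\circ f)\cdot\rho_f^u$ is admissible for $\Gamma$ outside a family of zero modulus. For almost every curve $\gamma$ in $X$ the map $f$ is absolutely continuous along $\gamma$ and the upper gradient inequality holds, so $f\circ\gamma$ is locally rectifiable and its metric speed is dominated by $\rho_f^u\circ\gamma$; hence $\int_\gamma\tilde g\,ds\ge\int_{f\circ\gamma}g\,ds\ge1$. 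Since zero-modulus families do not affect the modulus,
$$
\mod\Gamma\le\int_X\tilde g^2\,d\hm^2_X=\int_X(g\circ f)^2(\rho_f^u)^2\,d\hm^2_X\le C\int_X(g\circ f)^2\,\jac_f\,d\hm^2_X.
$$
Because $f$ is a homeomorphism, $\jac_f\,d\hm^2_X$ is dominated by the pull-back measure, whose pushforward under $f$ equals $\hm^2_Y$, so the last integral is at most $\int_Y g^2\,d\hm^2_Y$. Taking the infimum over admissible $g$ yields $\mod\Gamma\le C\,\mod f(\Gamma)$ with the same constant.

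\emph{From the modulus inequality to the analytic condition} is the substantial direction; I would follow Williams' scheme. First, almost every curve in $X$ has locally rectifiable image: the family of curves $\gamma$ with $f\circ\gamma$ not locally rectifiable is carried by $f$ onto a family of non-locally-rectifiable curves, which has zero modulus, so by \eqref{williams:geo} the original family does too. Next — and this is the heart of the matter — I would produce a weak upper gradient $g\in\Lloc^2(X)$ of $f$: working in a subdomain $\Omega$ compactly contained in $X$, one takes a candidate such as $g(x)=\limsup_{r\to0}r^{-1}\diam f(\overline{B}(x,r))$, verifies the upper gradient inequality along almost every curve by a Lebesgue-differentiation argument based on the first step, and controls $\int_\Omega g^2$ by testing \eqref{williams:geo} against functions adapted to infinitesimal balls or rings in $Y$ together with a covering argument, the resulting bound being a constant multiple of $\hm^2_Y(f(\Omega'))<\infty$ for a slightly larger $\Omega'$. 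Since metric surfaces need not be doubling, this covering step must be run through the Hausdorff-measure Vitali theorem (Theorem \ref{thm:Vit}) and density statements of the type in Theorem \ref{thm:regular} rather than through the usual maximal-function estimates.

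Finally, I would upgrade to the pointwise inequality $\rho_f^u(x)^2\le C\,\jac_f(x)$. At almost every $x$ — a Lebesgue point of $\rho_f^u$ at which the pull-back measure has density $\jac_f(x)$ and $\hm^2_X$ behaves as in Theorem \ref{thm:regular} — one applies \eqref{williams:geo} to the family of curves crossing the ring $\overline{B}(x,2r)\setminus B(x,r)$: a lower estimate for this modulus in terms of $\rho_f^u(x)$ and an upper estimate for the image family in terms of $\hm^2_Y(f(\overline{B}(x,2r)))/r^2$ combine, letting $r\to0$, into the desired inequality, with the constant tracked so that it coincides. The hard part is the middle step of the second implication: extracting $\Nloc$-regularity from the one-sided modulus inequality alone, in the absence of doubling, Loewner, or reciprocity assumptions on $X$ and $Y$; the rest is bookkeeping around the area formula and the covering theorems already at our disposal.
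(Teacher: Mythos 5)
First, a point of comparison: the paper does not prove this statement at all. Theorem \ref{thm:williams} is quoted from Williams \cite{Wil:12}*{Theorem 1.1} and used as a black box, so there is no internal proof to measure yours against; what follows is an assessment of your proposal on its own terms.

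Your implication from \ref{williams:ana} to \eqref{williams:geo} is correct and standard: $(g\circ f)\rho_f^u$ is admissible for $\Gamma$ off a zero-modulus family (which you should formally absorb by adding an admissible function of arbitrarily small energy), and the energy bound follows from the pointwise inequality, $\jac_f\,d\hm^2_X\leq d\mu$, and the change of variables $\int_X(g\circ f)^2\,d\mu=\int_Y g^2\,d\hm^2_Y$ valid for a homeomorphism. The constant is preserved.

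The converse, however, contains a genuine gap precisely at the step you yourself flag as the heart of the matter, and the route you propose for it would fail. Two concrete objections. First, your candidate $g(x)=\limsup_{r\to0}r^{-1}\diam f(\overline{B}(x,r))$ is the metric-definition quantity, and the paper explicitly notes (citing \cite{RRR21}*{Section 5}) that on general metric surfaces the metric definition is \emph{not} equivalent to the analytic or geometric ones; there is no mechanism by which the one-sided hypothesis \eqref{williams:geo} yields an $L^2$ bound for this $g$. The covering tools you invoke (Theorems \ref{thm:Vit} and \ref{thm:regular}) control the Hausdorff measure of $X$, not the oscillation of $f$ on small balls, which is what such an estimate would require. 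Second, your concluding ring argument needs a \emph{lower} bound for the modulus of the family of curves crossing $\overline{B}(x,2r)\setminus B(x,r)$ in terms of $\rho_f^u(x)$; this is a Loewner-type estimate that simply does not hold on an arbitrary metric surface, where such moduli may vanish. Williams' actual proof of this direction is structured to avoid both issues: it is purely measure-theoretic, showing via a Fuglede-type duality argument that admissible functions for image families pull back to weakly admissible functions weighted by $(C\jac_f)^{1/2}$, whence $(C\jac_f)^{1/2}$ is itself a weak upper gradient of $f$; this gives $f\in\Nloc(X,Y)$ and the pointwise inequality $\rho_f^u\leq(C\jac_f)^{1/2}$ simultaneously by minimality, with no covering theorems, doubling, or ring moduli. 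As written, your proposal does not supply the missing middle step, and the strategy you sketch for it is not one that can be completed in this generality.
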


\begin{proof}[Proof of Corollary \ref{cor:qc-equivalence}]
    We first note that Corollary \ref{cor:jacohomeo} implies the equivalence of analytic quasiconformality as defined in our work and Condition \ref{williams:ana} in Theorem \ref{thm:williams}. In particular, the constant may be chosen to be the same.

    In the next step we show that within this setting $f$ satisfying \eqref{williams:geo} for some $C\geq1$ implies that $f$ is geometrically $4C$-quasiconformal. Namely, as $f$ maps into $\R^2$ and satisfies \eqref{williams:geo}, it follows from \cite{Raj:17} (see also \cite{NR22}, \cite{RR19}) that there exists a geometrically 2-quasiconformal homeomorphism $u\colon U\to X$, where $U\subset\R^2$ is a domain. Now the map $h:=f\circ u\colon U\to\R^2$ is a homeomorphism satisfying 
    $\mod \Gamma\leq2C\cdot\mod h(\Gamma)$
    for every family $\Gamma$ of curves in $X$. As the domains are planar, $h$ is geometrically $2C$-quasiconformal. Thus, $f$ is geometrically $4C$-quasiconformal. This shows that \eqref{williams:geo} is quantitatively equivalent to geometric quasiconformality.

    Theorem \ref{thm:williams} now implies the equivalence between Conditions \eqref{cor:qc-eq-ana} and \eqref{cor:qc-eq-geo} in Corollary \ref{cor:qc-equivalence}, i.e., between analytic and geometric quasiconformality. Moreover, it follows from our main theorem (Theorem \ref{thm:main}) that \eqref{cor:qc-eq-ana} is quantitatively equivalent with \eqref{cor:qc-eq-path}. More explicitly, if $f$ is analytically $C$-qua\-si\-con\-for\-mal then $f$ is $K$-quasiconformal along paths with $K=4\sqrt{2}C$ and vice versa. We have proven the equivalence of Conditions (1)-(3). 

    It remains to show that if $f$ is quasiconformal according to any of the conditions above, then so is $f^{-1}$. First, it follows from the definition of geometric quasiconformality that $f^{-1}$ is geometrically quasiconformal. Moreover, applying Theorem \ref{thm:williams} to $f^{-1}$ shows that $f^{-1}$ is also analytically quasiconformal. In particular,  $f^{-1} \in \Nloc(f(X),X)$. Quasiconformality of $f^{-1}$ along paths now follows from analytic quasiconformality by combining Theorem \ref{thm:classical-area-formula} and Lemma \ref{lem:MR23-2.9}. 
    The proof is complete. \end{proof}

\begin{proof}[Proof of Theorem \ref{thm:reciprocal}]
    Let $f\in\operatorname{FDP}(X,\R^2)$ be $K$-quasiregular. By Theorem \ref{thm:unif} (iii), there is a weakly quasiconformal parametrization $u:U \to X$, where $U\subset \R^2$, so that $h=f\circ u$ is in $\operatorname{FDP}(U,\R^2)$ and $\sqrt{2}K$-quasiregular. By Theorem \ref{thm:main-MR23}, $h$ is discrete and open. We conclude that $u$ is both discrete and monotone and therefore a homeomorphism. We will show that $\phi:=u^{-1}\colon X\to U$ is (geometrically) quasiconformal. 
   
    Denote by $\mathcal{B}_f$ the set of branch points of $f$, i.e., the set of points at which $f$ is not locally invertible, and recall that $\mathcal{B}_f$ is a discrete set. For every $x\in X\setminus\mathcal{B}_f$ we find a neighbourhood $V_x\subset X$ of $x$ such that $f|_{V_x}$ is a homeomorphism onto its image. It follows from the proof of Corollary \ref{cor:qc-equivalence} that $f|_{V_x}$ is geometrically $16\sqrt{2}K$-quasiconformal and $h|_{u^{-1}(V_x)}$ is geometrically $32K$-quasiconformal. In particular, $\phi|_{V_x}$ is geometrically $C$-quasiconformal with $C=512\sqrt{2}K$.

    The proof of Corollary \ref{cor:qc-equivalence} implies that the restriction of $\phi$ to $\tilde{X}=X \setminus \mathcal{B}_f$ is analytically $C$-quasiconformal. In particular, there is a family $\Gamma_0$ of curves in $\tilde{X}$ with zero modulus so that $\phi$ is absolutely continuous on all paths $\tilde{\gamma} \notin \Gamma_0$ in $\tilde{X}$. Theorem \ref{thm:reciprocal} follows if we can show that $\phi$ is absolutely continuous on almost every rectifiable curve $\gamma$ in $X$. 

    We fix such a $\gamma$. Then, since $\mathcal{B}_f$ is discrete, $|\gamma| \cap \mathcal{B}_f$ is finite. Since $\phi$ is continuous, we conclude that if $\phi$ is not absolutely continuous on $\gamma$ then there is a subpath $\tilde{\gamma}$ with image in $\tilde{X}$ where $\phi$ is not absolutely continuous, that is, $\tilde{\gamma} \in \Gamma_0$. The family of paths in $X$ which contain a subpath in $\Gamma_0$ has zero modulus. The proof is complete. 
\end{proof}

\begin{proof}[Proof of Theorem \ref{thm:factorization}]
    By Theorem \ref{thm:reciprocal} there is a geometrically quasiconformal homeomorphism $\phi\colon X\to U$, where $U$ is a domain in $\R^2$. The map $h=f\circ \phi^{-1}:U \to \R^2$ is quasiregular. By the measurable Riemann mapping theorem (see e.g.\ \cite{AIM09}*{Theorem 5.3.4}) there exists a quasiconformal map $\psi\colon U\to V$, $V\subset\R^2$, such that $g:=h\circ\psi^{-1}:V \to \R^2$ is analytic. The statement follows after setting $v:=\psi\circ \phi$. 
\end{proof}

\begin{bibdiv}
\begin{biblist}

\bib{AIM09}{book}{
      author={Astala, Kari},
      author={Iwaniec, Tadeusz},
      author={Martin, Gaven},
       title={Elliptic partial differential equations and quasiconformal
  mappings in the plane},
      series={Princeton Mathematical Series},
   publisher={Princeton University Press, Princeton, NJ},
        date={2009},
      volume={48},
        ISBN={978-0-691-13777-3},
      review={\MR{2472875}},
}

\bib{AmbTil04}{book}{
      author={Ambrosio, Luigi},
      author={Tilli, Paolo},
       title={Topics on analysis in metric spaces},
      series={Oxford Lecture Series in Mathematics and its Applications},
   publisher={Oxford University Press, Oxford},
        date={2004},
      volume={25},
        ISBN={0-19-852938-4},
      review={\MR{2039660}},
}

\bib{Bal97}{incollection}{
      author={Ball, Keith},
       title={An elementary introduction to modern convex geometry},
        date={1997},
   booktitle={Flavors of geometry},
      series={Math. Sci. Res. Inst. Publ.},
      volume={31},
   publisher={Cambridge Univ. Press, Cambridge},
       pages={1\ndash 58},
}

\bib{BonKle02}{article}{
      author={Bonk, Mario},
      author={Kleiner, Bruce},
       title={Quasisymmetric parametrizations of two-dimensional metric
  spheres},
        date={2002},
        ISSN={0020-9910},
     journal={Invent. Math.},
      volume={150},
      number={1},
       pages={127\ndash 183},
         url={https://doi.org/10.1007/s00222-002-0233-z},
      review={\MR{1930885}},
}

\bib{BKR07}{article}{
      author={Balogh, Zolt\'{a}n~M.},
      author={Koskela, Pekka},
      author={Rogovin, Sari},
       title={Absolute continuity of quasiconformal mappings on curves},
        date={2007},
        ISSN={1016-443X},
     journal={Geom. Funct. Anal.},
      volume={17},
      number={3},
       pages={645\ndash 664},
         url={https://doi.org/10.1007/s00039-007-0607-x},
      review={\MR{2346270}},
}

\bib{Cri06}{article}{
      author={Cristea, Mihai},
       title={Quasiregularity in metric spaces},
        date={2006},
        ISSN={0035-3965},
     journal={Rev. Roumaine Math. Pures Appl.},
      volume={51},
      number={3},
       pages={291\ndash 310},
      review={\MR{2275345}},
}

\bib{EG92}{book}{
      author={Evans, Lawrence~C.},
      author={Gariepy, Ronald~F.},
       title={Measure theory and fine properties of functions},
      series={Studies in Advanced Mathematics},
   publisher={CRC Press, Boca Raton, FL},
        date={1992},
}

\bib{Fed69}{book}{
      author={Federer, Herbert},
       title={Geometric measure theory},
      series={Die Grundlehren der mathematischen Wissenschaften},
   publisher={Springer-Verlag New York, Inc., New York},
        date={1969},
      volume={153},
}

\bib{Guo15}{article}{
      author={Guo, Chang-yu},
       title={Mappings of finite distortion between metric measure spaces},
        date={2015},
     journal={Conform. Geom. Dyn.},
      volume={19},
       pages={95\ndash 121},
         url={https://doi.org/10.1090/ecgd/277},
      review={\MR{3338960}},
}

\bib{Hei01}{book}{
      author={Heinonen, Juha},
       title={Lectures on analysis on metric spaces},
      series={Universitext},
   publisher={Springer-Verlag},
     address={New York},
        date={2001},
        ISBN={0-387-95104-0},
         url={http://dx.doi.org/10.1007/978-1-4613-0131-8},
      review={\MR{1800917 (2002c:30028)}},
}

\bib{HeiKos95}{article}{
      author={Heinonen, Juha},
      author={Koskela, Pekka},
       title={Definitions of quasiconformality},
        date={1995},
        ISSN={0020-9910},
     journal={Invent. Math.},
      volume={120},
      number={1},
       pages={61\ndash 79},
         url={https://doi.org/10.1007/BF01241122},
      review={\MR{1323982}},
}

\bib{HeiKos98}{article}{
      author={Heinonen, Juha},
      author={Koskela, Pekka},
       title={Quasiconformal maps in metric spaces with controlled geometry},
        date={1998},
        ISSN={0001-5962},
     journal={Acta Math.},
      volume={181},
      number={1},
       pages={1\ndash 61},
         url={https://doi.org/10.1007/BF02392747},
      review={\MR{1654771}},
}

\bib{HKST01}{article}{
      author={Heinonen, Juha},
      author={Koskela, Pekka},
      author={Shanmugalingam, Nageswari},
      author={Tyson, Jeremy~T.},
       title={Sobolev classes of {B}anach space-valued functions and
  quasiconformal mappings},
        date={2001},
        ISSN={0021-7670},
     journal={J. Anal. Math.},
      volume={85},
       pages={87\ndash 139},
         url={https://doi.org/10.1007/BF02788076},
      review={\MR{1869604}},
}

\bib{HKST:15}{book}{
      author={Heinonen, Juha},
      author={Koskela, Pekka},
      author={Shanmugalingam, Nageswari},
      author={Tyson, Jeremy~T.},
       title={Sobolev spaces on metric measure spaces: An approach based on
  upper gradients},
      series={New Mathematical Monographs},
   publisher={Cambridge University Press, Cambridge},
        date={2015},
      volume={27},
}

\bib{Iko:19}{article}{
      author={Ikonen, Toni},
       title={Uniformization of metric surfaces using isothermal coordinates},
        date={2022},
     journal={Ann. Fenn. Math.},
      volume={47},
      number={1},
       pages={155\ndash 180},
}

\bib{Kar07}{article}{
      author={Karmanova, M.~B.},
       title={Area and co-area formulas for mappings of the {S}obolev classes
  with values in a metric space},
        date={2007},
     journal={Sibirsk. Mat. Zh.},
      volume={48},
      number={4},
       pages={778\ndash 788},
}

\bib{Kir14}{article}{
      author={Kirsil\"{a}, Ville},
       title={Mappings of finite distortion from generalized manifolds},
        date={2014},
     journal={Conform. Geom. Dyn.},
      volume={18},
       pages={229\ndash 262},
         url={https://doi.org/10.1090/S1088-4173-2014-00272-0},
      review={\MR{3278159}},
}

\bib{Kir94}{article}{
      author={Kirchheim, Bernd},
       title={Rectifiable metric spaces: local structure and regularity of the
  {H}ausdorff measure},
        date={1994},
        ISSN={0002-9939},
     journal={Proc. Amer. Math. Soc.},
      volume={121},
      number={1},
       pages={113\ndash 123},
         url={https://doi.org/10.2307/2160371},
      review={\MR{1189747}},
}

\bib{LP20}{article}{
      author={Luisto, Rami},
      author={Pankka, Pekka},
       title={Sto\"{\i}low's theorem revisited},
        date={2020},
        ISSN={0723-0869,1878-0792},
     journal={Expo. Math.},
      volume={38},
      number={3},
       pages={303\ndash 318},
         url={https://doi.org/10.1016/j.exmath.2019.04.002},
      review={\MR{4149174}},
}

\bib{LWarea}{article}{
      author={Lytchak, Alexander},
      author={Wenger, Stefan},
       title={Area minimizing discs in metric spaces},
        date={2017},
     journal={Arch. Ration. Mech. Anal.},
      volume={223},
      number={3},
       pages={1123\ndash 1182},
}

\bib{LWintrinsic}{article}{
      author={Lytchak, Alexander},
      author={Wenger, Stefan},
       title={Intrinsic structure of minimal discs in metric spaces},
        date={2018},
     journal={Geom. Topol.},
      volume={22},
      number={1},
       pages={591\ndash 644},
}

\bib{Mei22}{article}{
      author={Meier, Damaris},
       title={Quasiconformal uniformization of metric surfaces of higher
  topology},
        date={2024},
     journal={Indiana Univ. Math. J.},
        note={To appear},
}

\bib{MN23}{article}{
      author={Meier, Damaris},
      author={Ntalampekos, Dimitrios},
       title={Lipschitz-volume rigidity and {S}obolev coarea inequality for
  metric surfaces},
        date={2024},
        ISSN={1050-6926,1559-002X},
     journal={J. Geom. Anal.},
      volume={34},
      number={5},
       pages={Paper No. 128, 30},
         url={https://doi.org/10.1007/s12220-024-01577-x},
      review={\MR{4718625}},
}

\bib{MR23}{article}{
      author={Meier, Damaris},
      author={Rajala, Kai},
       title={Mappings of finite distortion on metric surfaces},
        date={2023},
       pages={preprint arXiv:2309.15615},
}

\bib{MW21}{article}{
      author={Meier, Damaris},
      author={Wenger, Stefan},
       title={Quasiconformal almost parametrizations of metric surfaces},
        date={2024},
     journal={J. Eur. Math. Soc.},
        note={published online first},
}

\bib{NR22}{article}{
      author={Ntalampekos, Dimitrios},
      author={Romney, Matthew},
       title={Polyhedral approximation and uniformization for non-length
  surfaces},
        date={2022},
       pages={preprint arXiv:2206.01128},
}

\bib{NR:21}{article}{
      author={Ntalampekos, Dimitrios},
      author={Romney, Matthew},
       title={Polyhedral approximation of metric surfaces and applications to
  uniformization},
        date={2023},
        ISSN={0012-7094,1547-7398},
     journal={Duke Math. J.},
      volume={172},
      number={9},
       pages={1673\ndash 1734},
         url={https://doi.org/10.1215/00127094-2022-0061},
      review={\MR{4608329}},
}

\bib{OnnRaj09}{article}{
      author={Onninen, Jani},
      author={Rajala, Kai},
       title={Quasiregular mappings to generalized manifolds},
        date={2009},
        ISSN={0021-7670},
     journal={J. Anal. Math.},
      volume={109},
       pages={33\ndash 79},
         url={https://doi.org/10.1007/s11854-009-0028-x},
      review={\MR{2585391}},
}

\bib{Raj:17}{article}{
      author={Rajala, Kai},
       title={Uniformization of two-dimensional metric surfaces},
        date={2017},
     journal={Invent. Math.},
      volume={207},
      number={3},
       pages={1301\ndash 1375},
}

\bib{Ric93}{book}{
      author={Rickman, Seppo},
       title={Quasiregular mappings},
      series={Ergebnisse der Mathematik und ihrer Grenzgebiete (3) [Results in
  Mathematics and Related Areas (3)]},
   publisher={Springer-Verlag, Berlin},
        date={1993},
      volume={26},
        ISBN={3-540-56648-1},
         url={https://doi.org/10.1007/978-3-642-78201-5},
      review={\MR{1238941}},
}

\bib{RR19}{article}{
      author={Rajala, Kai},
      author={Romney, Matthew},
       title={Reciprocal lower bound on modulus of curve families in metric
  surfaces},
        date={2019},
        ISSN={1239-629X},
     journal={Ann. Acad. Sci. Fenn. Math.},
      volume={44},
      number={2},
       pages={681\ndash 692},
         url={https://doi.org/10.5186/aasfm.2019.4442},
      review={\MR{3973535}},
}

\bib{RRR21}{article}{
      author={Rajala, Kai},
      author={Rasimus, Martti},
      author={Romney, Matthew},
       title={Uniformization with infinitesimally metric measures},
        date={2021},
        ISSN={1050-6926},
     journal={J. Geom. Anal.},
      volume={31},
      number={11},
       pages={11445\ndash 11470},
         url={https://doi.org/10.1007/s12220-021-00689-y},
      review={\MR{4310179}},
}

\bib{Tys01}{article}{
      author={Tyson, Jeremy~T.},
       title={Metric and geometric quasiconformality in {A}hlfors regular
  {L}oewner spaces},
        date={2001},
     journal={Conform. Geom. Dyn.},
      volume={5},
       pages={21\ndash 73},
         url={https://doi.org/10.1090/S1088-4173-01-00064-9},
      review={\MR{1872156}},
}

\bib{Tys98}{article}{
      author={Tyson, Jeremy},
       title={Quasiconformality and quasisymmetry in metric measure spaces},
        date={1998},
        ISSN={1239-629X},
     journal={Ann. Acad. Sci. Fenn. Math.},
      volume={23},
      number={2},
       pages={525\ndash 548},
      review={\MR{1642158}},
}

\bib{Wil:12}{article}{
      author={Williams, Marshall},
       title={Geometric and analytic quasiconformality in metric measure
  spaces},
        date={2012},
     journal={Proc. Amer. Math. Soc.},
      volume={140},
      number={4},
       pages={1251\ndash 1266},
}

\end{biblist}
\end{bibdiv}

\end{document}